\documentclass[11pt,reqno]{amsart}

\usepackage[utf8]{inputenc}
\usepackage[margin=1in]{geometry}
\usepackage{amsmath,amssymb,amsthm,mathtools}
\usepackage{graphicx,xcolor}
\usepackage{enumitem}
\usepackage[hidelinks]{hyperref}
\numberwithin{equation}{section}

\newtheorem{theorem}{Theorem}[section]
\newtheorem{lemma}{Lemma}[section]
\newtheorem{proposition}{Proposition}[section]
\newtheorem{corollary}{Corollary}[section]

\theoremstyle{definition}
\newtheorem{definition}{Definition}[section]
\newtheorem{assumption}{Assumption}[section]
\newtheorem{example}{Example}[section]

\theoremstyle{remark}
\newtheorem{remark}{Remark}[section]

\usepackage{xcolor}

\usepackage[nameinlink,noabbrev]{cleveref}
\crefname{theorem}{Theorem}{Theorems}
\Crefname{theorem}{Theorem}{Theorems}
\crefname{lemma}{Lemma}{Lemmas}
\Crefname{lemma}{Lemma}{Lemmas}
\crefname{proposition}{Proposition}{Propositions}
\Crefname{proposition}{Proposition}{Propositions}
\crefname{corollary}{Corollary}{Corollaries}
\Crefname{corollary}{Corollary}{Corollaries}
\crefname{definition}{Definition}{Definitions}
\Crefname{definition}{Definition}{Definitions}
\crefname{assumption}{Assumption}{Assumptions}
\Crefname{assumption}{Assumption}{Assumptions}
\crefname{example}{Example}{Examples}
\Crefname{example}{Example}{Examples}
\crefname{remark}{Remark}{Remarks}
\Crefname{remark}{Remark}{Remarks}
\crefname{equation}{equation}{equations}
\Crefname{equation}{Equation}{Equations}

\newcommand{\R}{\mathbb{R}}
\newcommand{\dd}{\,\mathrm d}
\newcommand{\D}{\Delta_d}
\newcommand{\Dint}{\operatorname{int}\Delta_d}
\newcommand{\one}{\mathbf 1}
\newcommand{\osc}{\operatorname{osc}}
\newcommand{\T}{\mathsf{T}}
\newcommand{\KL}{\operatorname{KL}}

\title[Entropy-Regularized Mean Field Control]{
Entropy-Regularized Finite-State Mean Field Control:
Regularity and Stability}
\author{Asaf Cohen}
\author{Aditya Priya}
\date{}

\begin{document}

\begin{abstract}

We study infinite-horizon discounted mean field control of finite-state
continuous-time Markov chains with unbounded transition rates and a
relative-entropy rate penalty. Under mild assumptions, the value function
is the unique bounded viscosity solution of the stationary
Hamilton--Jacobi--Bellman equation on the closed probability simplex.
Joint convexity of a quadratically corrected flux Lagrangian and
smoothness of the data yield global semiconvexity, intrinsic
$\mathcal C^{1,1}$ regularity, and optimal Gibbs feedback.
For positive symmetric reference rates, an entropy-plus-value free
energy gives global exponential convergence under a quantitative
curvature condition. We also establish inverse-temperature limits for
values and, under parameter-uniform hypotheses, for feedbacks and
closed-loop trajectories. With population-independent reference rates
and a potential state-cost structure, optimal planner trajectories
induce discounted finite-state mean field game equilibria.
As an application, size-dependent market-making quotes with exponential
demand admit an exact revenue-to-entropy reformulation; a convex
inventory-crowding potential connects optimal quoting to stabilization
of the inventory distribution.

\end{abstract} \
\maketitle

\noindent\textbf{Keywords:}  finite-state mean field control; nonlinear continuous-time Markov chains;  potential mean field games; market making; entropy regularization; state constraints; exponential stability; Hamilton--Jacobi equations on the simplex.

\noindent\textbf{AMS Classification:} 91A16, 93D05, 35Q89, 49L25, 60J27.
\section{Introduction}

Many operational systems can be described by the fractions of a large
population in finitely many states. Examples include customers using
different service channels, vehicles located in different service
zones, swarms of robots, and machines in operating, idle, or maintenance modes. A planner
can influence how quickly units move between states through routing,
dispatching, incentives, or maintenance decisions.

Single-asset market making provides another example. A dealer's
inventory position is a state, customer executions move the dealer
between states, and size-dependent quotes control the transition rates.
The population distribution records the fractions of dealers at each
inventory level, while inventory-crowding costs couple their decisions.
In \Cref{sec:market-making}, exponential demand with common sensitivity
to total transaction concession yields an exact revenue-to-entropy
identity and a potential mean field game specialization of the control
framework.

Mean-field games (MFGs) and mean-field control problems (MFCPs) provide related
frameworks for decision making in large populations; see the
foundational works
\cite{LasryLions2007,HuangCainesMalhame2006} and the monographs
\cite{BensoussanFrehseYam2013,CarmonaDelarue2018I,
CarmonaDelarue2018II}. Continuous-time finite-state models have been
studied in
\cite{GomesMohrSouza2013,BayraktarCohen2018,CarmonaWang2021,
CecchinFischer2020,CecchinPelino2019,Cecchin2021}.
The closest control analysis is Cecchin \cite{Cecchin2021}, who studies
a finite-horizon problem with bounded transition-rate controls and
analyzes its simplex Hamilton--Jacobi--Bellman (HJB) equation and finite-agent approximation.
Carmona and Wang \cite{CarmonaWang2021} study decentralized
finite-state MFG equilibria and their finite-player interpretation.
Our primary object is instead a stationary centralized control problem
with unbounded transition intensities.

We formulate this decision problem as an infinite-horizon discounted
mean field control problem on the probability simplex. At population
state $\eta$ and rate matrix $a$, the running cost is
\[
f(\eta,a)
=
C(\eta)
+
\frac1\beta
\sum_{i=1}^d\eta_i
\sum_{j\ne i}
\left[
a_{ij}\log\frac{a_{ij}}{q_{ij}(\eta)}
-a_{ij}+q_{ij}(\eta)
\right],
\]
with the convention $0\log 0=0$. Here $C(\eta)$ is the system-level cost, $q(\eta)$ describes the
nominal transition rates, and $\beta>0$ is the inverse-temperature
parameter: $1/\beta$ is the weight assigned to deviations from those
rates. Thus smaller $\beta$ makes such deviations more costly. Before
multiplication by $1/\beta$, the population-weighted Kullback--Leibler (KL) sum is the
instantaneous relative-entropy rate of the controlled jump law with
respect to the nominal jump law
\cite{KesidisWalrand1993}. This KL penalty has several useful features. It is zero when the planner
uses the nominal rate $a_{ij}=q_{ij}(\eta)$. For a small perturbation of
a positive nominal rate,
\[
(q+\delta)\log\frac{q+\delta}{q}-(q+\delta)+q
=
\frac{\delta^2}{2q}+o(\delta^2),
\qquad \delta\to0,
\]
so locally it behaves like a quadratic adjustment cost, scaled by the
nominal rate. At the same time, it grows superlinearly as
$a_{ij}\to\infty$, which controls very large transition rates without
requiring an artificial upper bound on the control set. It is also
compatible with population fluxes: writing
$w_{ij}=\eta_i a_{ij}$ and
$r_{ij}=\eta_iq_{ij}(\eta)$ gives
\[
\eta_i
\left[
a_{ij}\log\frac{a_{ij}}{q_{ij}(\eta)}
-a_{ij}+q_{ij}(\eta)
\right]
=
w_{ij}\log\frac{w_{ij}}{r_{ij}}-w_{ij}+r_{ij},
\]
with the natural convention when $\eta_i=0$. This identity is useful
both for handling the boundary of the simplex and for exploiting
convexity in the population fluxes.

{\color{black}
The KL penalty also determines the optimal dynamics. At interior
states where $V$ is differentiable, edgewise Hamiltonian minimization
converts the logarithmic derivative of the penalty into the Gibbs rule
\[
a^*_{ij}(\eta)=q_{ij}(\eta)
\exp\!\left\{-\beta\bigl[(D_{\T}V(\eta))_j-(D_{\T}V(\eta))_i\bigr]\right\}.
\]
For symmetric reference rates, the resulting nonlinear jump dynamics
dissipates the free energy $\sum_i\eta_i\log\eta_i+2\beta V(\eta)$,
providing a finite-state analogue of the free-energy structure of
overdamped Langevin (McKean--Vlasov) dynamics
\cite{ErbarFathiLaschosSchlichting2016}. This links the KL control
formulation directly to an explicit optimal policy and to the
free-energy mechanism underlying our stability theorem.
}

The main analytical challenges are that the planner's state is the
continuous population vector, the control set is unbounded, and the
factor $\eta_i$ makes both the drift and the reference flux degenerate
when state $i$ has zero population mass. Although the formal Gibbs rule
can be computed separately for each transition, this calculation alone
does not establish dynamic programming at the boundary, regularity and
verification of the feedback, or long-run stability of the resulting
nonlinear dynamics. We establish all of these properties within one
stationary framework.

{\color{black}
From the planner's perspective, the analysis addresses five linked
questions: whether the discounted problem remains well posed with
unbounded rate decisions; whether it admits an implementable stationary
optimal policy; how the value and policy depend on the strength of the
entropy regularization; whether the resulting policy stabilizes the
population distribution; and when the centralized policy can be
decentralized as a mean field game equilibrium.
}
Our contributions answer these questions
as follows.

\begin{itemize}
    \item Under the mild viscosity-level assumptions, without convexity, we prove that the value function is the unique bounded viscosity solution on the closed simplex of the HJB equation. The viscosity inequalities follow from the dynamic programming principle (DPP), while uniqueness follows by reducing the equation to the finite-state mean-field Hamiltonian treated by Kraaij's comparison theorem \cite{Kraaij2016}.

    \item Under joint convexity of a quadratically corrected flux Lagrangian and
$\mathcal C^{1,1}$ regularity of the data, we prove that the value function is globally semiconvex and intrinsically $\mathcal{C}^{1,1}$ on the closed simplex. This yields a Gibbs-form feedback that is the unique Hamiltonian minimizer at interior states, global well-posedness of the closed-loop dynamics from interior initial conditions, and verification and uniqueness of the optimal control.

    \item For positive symmetric reference rates, which may depend on the
population distribution, we identify an entropy-plus-value free energy
and establish a global entropy-production inequality.
For example, if $C$ is convex and of class $\mathcal C^{1,1}$ and the
reference rates are constant and symmetric, then the constant in
\Cref{ass:regularity} may be taken to be $K=0$. Hence the condition
$2\beta K<1$ is automatic for every $\beta>0$.
More generally, if $K$ is the semiconvexity
constant, the condition $2\beta K<1$ makes the free energy strongly
convex and implies global exponential convergence of the closed-loop
dynamics to a unique equilibrium.

{\color{black}
\item Treating the inverse-temperature parameter as a model parameter,
we prove that $V_\beta$ is nonincreasing in $\beta$, converges uniformly
to the reference-dynamics value as $\beta\downarrow0$, and converges
uniformly to $C_{\min}/\rho$ as $\beta\to\infty$. Under the
\textcolor{black}{parameter-uniform boundedness of $\beta K_\beta$}, the Gibbs feedback
converges uniformly on the closed simplex to the reference rates, with
corresponding finite-horizon and discounted infinite-horizon convergence
of the closed-loop dynamics.
\textcolor{black}{With symmetry and a uniform positive margin in $2\beta K_\beta<1$, we also obtain small-$\beta$ exponential convergence}
with a decay rate and state-error prefactor independent of $\beta$
and the initial state, and identify the small- and large-$\beta$
limits of the stable equilibria under the respective hypotheses.
}

    \item Under additional state-independence and potential assumptions, the optimal planner trajectory induces a discounted finite-state mean field game equilibrium, with the same Gibbs feedback for the planner and the representative agent.

\item 
We apply the framework to single-asset market making with
size-dependent quotes and exponential demand with common
sensitivity to total transaction concession.
An exact revenue-to-entropy identity identifies the myopic
reference rates and the corresponding state-cost correction.
A convex inventory-crowding potential satisfies the regularity
hypotheses and yields optimal quotes and a potential mean field
game equilibrium. With symmetric reference rates, the induced
inventory distribution converges exponentially to its equilibrium.

\end{itemize}

{\color{black}
These inverse-temperature refinements follow the same assumption
hierarchy. \Cref{thm:beta-limits} gives monotonicity and both value
limits under the viscosity-level assumptions, with Lipschitz reference
rates for the small-$\beta$ limit. Parameter-uniform regularity yields
uniform Gibbs-feedback and trajectory convergence in
\Cref{thm:uniform-small-beta-feedback,cor:small-beta-trajectories}.
\textcolor{black}{With symmetry and uniform strong convexity,} \Cref{thm:uniform-small-beta-stability} gives a common
exponential rate and state-error prefactor for small $\beta$, while
\Cref{cor:beta-stability-limits} identifies the equilibrium limits
under its respective small- and large-$\beta$ hypotheses.
}

Numerical experiments in \Cref{sec:numerical-experiments} illustrate both the control-derived chain from the HJB to stability and the standalone Gibbs-stability mechanism. \textcolor{black}{Additional temperature sweeps test the value, feedback, trajectory, and equilibrium limits, including a state-dependent reference model covered by the relaxed assumptions.}

\subsection{Positioning our model and contribution within the literature}

We relate the analysis to entropy regularization, which produces the Gibbs feedback, and long-time stability, and place the market-making application in the literature on execution intensities and population interaction.

\subsubsection{Entropic regularization}

Entropy enters mean-field models through several distinct mechanisms.
One relevant comparison is Guo, Xu, and Zariphopoulou
\cite{GuoXuZariphopoulou2022}, who study a finite-horizon MFG in which
entropy regularizes a randomized distribution over the representative
agent's actions, with emphasis on exploration and learning. Delarue and
Lavigne study finite-horizon robust MFG and mean field control problems
in continuous state space
\cite{DelarueLavigne2026RobustMFG,
DelarueLavigne2026RobustMFC}. A similar line of work by Cohen and coauthors treats stationary
singular-control and mean field game equilibria, robust control under
model ambiguity in dividend, harvesting, and queueing models, and
moderate-deviation queueing control and differential games
\cite{CohenSun2026,ChakrabortyCohenYoung2023,
CohenHeningSun2022,Cohen2019BrownianUncertainty,
Cohen2019QueueingUncertainty,AtarCohen2017,AtarCohen2016}.
These papers use entropy regularization
to penalize distortions of the reference model
chosen by an adverse player. Our formulation regularizes a different object: the planner's
continuous-time jump intensities are penalized by their relative-entropy
rate with respect to a reference generator. These works are therefore
complementary rather than exact precedents. Here the questions concern
stationary HJB well-posedness, Gibbs feedback, and global closed-loop
stability. Related relaxed-control approaches include
\cite{WangZariphopoulouZhou2020,
FrikhaGermainLaurierePhamSong2025,FrikhaPhamSong2024,
BayraktarHernandezYanZhu2026PolicyGradient,
BayraktarHernandezYanZhu2026PhiBE}.

This mechanism is related to linearly solvable and KL-controlled Markov models
\cite{Todorov2006,Todorov2009} and to the classical relative entropy of continuous-time Markov transition rates \cite{KesidisWalrand1993}. In our setting, however, the controlled state is the population distribution itself, the reference generator may depend nonlinearly on that distribution, and the HJB equation is posed on the simplex. The exponential transform therefore does not linearize the problem; state constraints, boundary degeneracy, regularity, and stability remain essential issues.

\subsubsection{Long-time stability}

A common feature of several earlier approaches to nonlinear McKean--Vlasov dynamics is the use of an underlying Markovian $N$-particle system as an analytical intermediary. Malrieu \cite{Malrieu2001}, in a continuous-state diffusion setting, derives estimates for a system of weakly interacting diffusions that are uniform in the particle number and then transfers them to the nonlinear McKean--Vlasov equation. In the finite-state setting, Budhiraja, Dupuis, Fischer, and Ramanan \cite{BudhirajaDupuisFischerRamanan2015} similarly exploit the Markovian $N$-particle model: invariant measures and large-deviation properties of the empirical measure motivate Lyapunov functions and an associated Hamilton--Jacobi equation for the limiting nonlinear Kolmogorov dynamics.

Cohen and Huffman \cite{CohenHuffman2026} study uniform-in-time
approximation of finite-state mean-field interacting jump processes by
their limiting nonlinear Markov chain. Under exponential stability of
the limiting chain, they establish a uniform-in-time weak error of order
$1/N$ and give a sufficient stability criterion formulated through the
linearized Kolmogorov equation. Their principal objective is therefore
to control the finite-particle approximation uniformly in time. Here,
by contrast, we prove global exponential stability of the limiting
Gibbs dynamics itself from an explicit free-energy dissipation
inequality; no finite-particle approximation is used.

A complementary finite-state program works directly with the limiting equation by introducing a nonlinear discrete transport geometry. Erbar, Fathi, Laschos, and Schlichting \cite{ErbarFathiLaschosSchlichting2016} identify a free-energy gradient-flow structure for reversible local Gibbs dynamics. Building on this framework, Erbar, Fathi, and Schlichting \cite{ErbarFathiSchlichting2020} introduce an entropic Ricci-curvature notion based on geodesic convexity of the free energy in the associated nonlinear transportation metric. A positive curvature lower bound yields functional inequalities, including a modified logarithmic Sobolev inequality, and consequently exponential free-energy decay. Verifying such curvature bounds requires controlling the Hessian of the free energy in the nonlinear transport geometry, including additional terms generated by the state dependence of the rates.

Our stability argument is complementary to these approaches. The
curvature criterion in \cite{ErbarFathiSchlichting2020} is formulated
through geodesic convexity in a state-dependent nonlinear
transportation metric, and its verification requires estimates in that
geometry. We instead work directly in the Euclidean geometry of the
simplex. For a prescribed potential, our sufficient hypotheses are
positive symmetric reference rates satisfying the standing bounds,
global Euclidean $K$-semiconvexity and local
$\mathcal C^{1,1}$ regularity of the potential, and the quantitative
condition $2\beta K<1$. These are different sufficient criteria, not
equivalent formulations of the entropic Ricci-curvature condition.

In the control problem, the required semiconvexity of the potential
$V$ is not assumed directly: it follows from joint convexity of the
quadratically corrected flux Lagrangian. The examples below reduce
this verification to ordinary convexity of a state term and concavity
of explicitly transformed reference fluxes. Once semiconvexity is
available, however, the entropy-production argument uses neither the
HJB equation nor the control origin of the potential. It therefore
applies to any prescribed potential that is continuous on $\D$,
globally $K$-semiconvex, and locally $\mathcal C^{1,1}$ on $\Dint$,
along any global interior solution of the associated Gibbs dynamics.
Global well-posedness for such a separately prescribed potential must
be justified separately. Thus the stability mechanism extends beyond
the present mean field control problem and requires neither an
$N$-particle approximation nor an entropic Ricci-curvature bound. The
paired exponential fluxes are also related to the cosh-type generalized
gradient structures studied in
\cite{MielkePeletierRenger2014,PeletierSchlichting2023}. The specific
mechanisms behind our results are summarized next.

\subsubsection{Market-making application}

The inventory approach to dealer pricing has a long history. Ho and Stoll \cite{mmHS1981} model stochastic trading demand by Poisson jumps together with price risk. Avellaneda and Stoikov \cite{mmAS2008} formulate quote control with price-sensitive execution intensities and exponential utility of terminal wealth. Their exponential intensity specification gives the standard tractable trade-off between spread earned per execution and execution frequency. Gu\'eant, Lehalle, and Fernandez-Tapia \cite{mmGLFT2013} analyze inventory constraints, a linear-ODE reduction, and long-horizon behavior of optimal quotes. Thus neither controlled inventory jumps nor long-horizon quoting alone is the contribution here.

Several extensions clarify the scope of this execution-based approach. Guilbaud and Pham \cite{mmGP2013} combine limit and market orders, with a finite-state spread process and Cox execution processes. Fodra and Labadie \cite{mmFL2013} allow a broader Markov environment including directional information. Gu\'eant \cite{mmG2017} develops a general framework and multi-asset approximations. Cartea, Jaimungal, and Ricci \cite{mmCJR2014} incorporate mutually exciting order flow, going beyond a memoryless Poisson specification. These models belong to the controlled point-process literature; they should not all be described as independent homogeneous Poisson models. Our inventory chains can be constructed from Poisson random measures, but their controlled, state-dependent intensities do not imply independent stationary increments for realized executions.

Bergault and Gu\'eant \cite{mmBG2021} provide the closest link to the present application: their marked executions allow different sizes, and their expected-wealth objective with running and terminal inventory penalties can be rewritten using spread revenue. Their state is one dealer's multi-asset inventory; ours is a population distribution over finitely many positions in one asset. Barzykin, Bergault, and Gu\'eant \cite{mmBBG2023} study FX client tiers, size-dependent pricing ladders, and hedging using fitted logistic demand. Their evidence supports size-indexed quoting but does not establish our common sensitivity to total dollar concession. We therefore state that scaling as an explicit assumption, rather than as an empirical consequence of these references.

Mean field interaction has also been introduced into market making and dynamic pricing. Baldacci, Bergault, and Possama\"i \cite{mmBBP2023} study a major market maker facing strategic minor traders; our population instead consists of dealers whose inventories enter a potential running cost. Donnelly and Li \cite{mmDL2025} study discrete-inventory sellers of a perishable product with competition through quoted prices. That interaction involves controls, whereas our direct planner-to-game correspondence uses population-independent reference intensities and state-distribution coupling in costs. We do not claim to encompass either model. The contribution of the financial specialization is the combination of an exact revenue-to-entropy identity, a discounted potential-game construction, and quantitative stabilization of the induced inventory distribution. It does not include a finite-player approximate-Nash theorem or empirical calibration.

\subsection{Proof architecture}

The viscosity proof begins with the observation that entropy coercivity
controls integrated population flux, even though the transition rates
themselves are unbounded. For a nearly optimal short-time control, the
entropy dual inequality bounds
$\int\sum_{i\ne j}\eta_i a_{ij}$ and therefore the total displacement
of the population state. This keeps the trajectory inside the
test-function neighborhood and permits the viscosity inequalities to
be proved even at boundary points. A change of unknown then puts the
HJB equation in the source-mass form covered by Kraaij's comparison
principle.

The value limits use two complementary mechanisms. As
$\beta\downarrow0$, comparison with the reference control forces
the discounted entropy expenditure of nearly optimal controls to be
$O(\beta)$; entropy coercivity then gives convergence of fluxes,
trajectories, and values. As $\beta\to\infty$, a fast-steering
competitor reaches a minimizer of $C$ in a vanishing initial layer
while its entropy cost divided by $\beta$ tends to zero.

The regularity argument has two complementary parts. Interpolating
initial states and fluxes in the quadratically corrected problem gives
global semiconvexity. Semiconcavity is more delicate because the
entropy terms appear singular as a population coordinate approaches
zero. We apply the same open-loop generator to nearby initial
conditions. Positivity of its stochastic evolution family shows that
the perturbation of each coordinate remains bounded relative to the
corresponding unperturbed mass. Consequently, large transition rates
enter the second-difference estimate only through the macroscopic
fluxes, whose discounted integral is controlled by entropy coercivity.
This first gives interior $\mathcal C^{1,1}$ regularity and identifies
the Gibbs feedback. The HJB equation and semiconvexity then give
uniform bounds on the optimal rates; freezing this verified bounded
control makes the semiconcavity estimate uniform up to the boundary
and yields intrinsic $\mathcal C^{1,1}$ regularity on the closed
simplex.

{\color{black}
\textcolor{black}{Under boundedness of $\beta K_\beta$, semiconvexity on short segments and the HJB}
equation additionally control the scaled gradient differences in the
Gibbs exponent, yielding uniform feedback convergence. Gronwall's
inequality and discounted tail bounds then give the finite-horizon
and discounted infinite-horizon trajectory limits.
}

For stability, symmetry pairs the two orientations of every edge and
produces an exact free-energy dissipation identity. The quantitative
estimate splits the simplex into two regions. In the interior, a
complete-graph identity and strong convexity control the free-energy
gap by the dissipation. Near the boundary, the logarithmic marginal
free-energy differences diverge and compensate for the vanishing
population masses, leaving a uniformly positive dissipative
contribution from at least one edge.

{\color{black}
\textcolor{black}{When $1-2\beta K_\beta$ has a uniform positive lower bound, the existing interior and boundary constants stay bounded away from zero for small $\beta$.}
Uniform strong convexity and a uniform free-energy gap bound yield
the common state-error prefactor. Minimization of the free energy
and the equilibrium HJB identity give the extreme-temperature
equilibrium limits under their respective hypotheses.
}

Finally, for the potential-MFG
correspondence, a first-order dynamic-programming sensitivity identity
derives the planner adjoint equation without evaluating a Hessian of
the merely $\mathcal C^{1,1}$ value function along the optimal
trajectory.

\subsection{Organization}

The remainder of the paper is organized as follows. \Cref{sec:2}
formulates the entropy-regularized control problem and states the
hierarchy of assumptions. \Cref{sec:main-results} contains four
main-result subsections: viscosity well-posedness, regularity and
Gibbs feedback, global stability, and the potential-MFG correspondence.
The inverse-temperature refinements appear with the general results
whose assumptions they use. 
\Cref{sec:market-making} develops the market-making interpretation
and its potential-game specialization.
\Cref{sec:numerical-experiments} presents the numerical experiments.
 The detailed proofs follow the same order in
\Cref{app:viscosity,app:regularity,app:convergence,app:potential-mfg}.

\section{The model setup}\label{sec:2}

\subsection{Notation and control problem}

Fix $d\ge2$ and write $[d]=\{1,\ldots,d\}$.
Let $\one:=(1,\ldots,1)^\top\in\R^d$. The probability simplex and its tangent space are
\[
\D:=\left\{\eta\in\R_+^d:
\sum_{i=1}^d\eta_i=1\right\},
\qquad
\T:=\left\{h\in\R^d:
\sum_{i=1}^d h_i=0\right\}.
\]
Throughout,
$\Dint:=\{\eta\in\D:\eta_i>0\text{ for every }i\}$
denotes the relative interior of $\D$. 
Thus $\T$ is the space of mass-preserving
perturbations of a population distribution.

{\color{black}
For a function $u$ differentiable along the affine hull of $\D$, its
intrinsic gradient $D_{\T}u(\eta)$ is the unique vector in $\T$ such that
\[
Du(\eta)[h]=D_{\T}u(\eta)\cdot h,
\qquad h\in\T.
\]
Equivalently, if $\widetilde p\in\R^d$ is any ambient gradient
representative, then
\[
D_{\T}u(\eta)
=
\widetilde p-
\left(\frac1d\sum_{i=1}^d\widetilde p_i\right)\one.
\]
In particular,
\[
\bigl(D_{\T}u(\eta)\bigr)_j-
\bigl(D_{\T}u(\eta)\bigr)_i
=
\widetilde p_j-\widetilde p_i,
\]
so these component differences are independent of the chosen ambient
representative.
}

We regard population distributions as row vectors. Let
\[
A:=\left\{a\in\R^{d\times d}:a_{ij}\ge0\ (i\ne j),\quad a_{ii}=-\sum_{j\ne i}a_{ij}\right\}
\]
be the set of transition-rate matrices.

\begin{definition}[Admissible rate controls]\label{def:admissible}
For $\eta_0\in\D$, an admissible control is a measurable map $a:[0,\infty)\to A$ such that, for every $T<\infty$,
\[
\int_0^T\sum_{i\ne j}a_{ij}(t)\dd t<\infty.
\]
The admissible class is denoted by $\mathcal A(\eta_0)$.
\end{definition}

For $a\in\mathcal A(\eta_0)$, finite-dimensional linear ordinary differential equation (ODE) theory gives a unique absolutely continuous stochastic evolution family $P^a(s,t)$ satisfying
\begin{equation}\label{eq:evolution-family}
\partial_tP^a(s,t)=P^a(s,t)a(t),\qquad P^a(s,s)=I.
\end{equation}
Thus the controlled state is $\eta(t)=\eta_0P^a(0,t)$ and is the unique solution of
\begin{equation}\label{eq:state-equation}
\dot\eta_i(t)=\sum_{j\ne i}\eta_j(t)a_{ji}(t)-\eta_i(t)\sum_{j\ne i}a_{ij}(t),\qquad \eta(0)=\eta_0.
\end{equation}
In particular, $\eta(t)\in\D$ for all $t\ge0$. If $\eta_0\in\Dint$, then every admissible trajectory remains in the interior at every finite time: from \eqref{eq:state-equation} and Gronwall's inequality,
\begin{equation}\label{eq:interior-preservation}
\eta_i(t)\ge \eta_i(0)\exp\left(-\int_0^t\sum_{j\ne i}a_{ij}(s)\dd s\right)>0,
\qquad t<\infty.
\end{equation}

For $x,r\ge0$, define
\[
\ell(x,r):=
\begin{cases}
 x\log(x/r)-x+r,&r>0,\\
 0,&x=r=0,\\
 +\infty,&r=0<x.
\end{cases}
\]
For a row $a_i=(a_{ij})_{j\ne i}$ and {\it reference row} $q_i(\eta)$, set
\[
\KL(a_i\Vert q_i(\eta)):=\sum_{j\ne i}\ell(a_{ij},q_{ij}(\eta)).
\]
Fix $\rho>0$ and $\beta>0$. The {\it running cost} is
\[
f(\eta,a):=C(\eta)+\frac1\beta\sum_{i=1}^d\eta_i\KL(a_i\Vert q_i(\eta)),
\]
and the {\it discounted cost} and {\it discounted value} are
\begin{equation}\label{eq:value-function}
J(\eta_0;a):=\int_0^\infty e^{-\rho t}f(\eta(t),a(t))\dd t,
\qquad
V(\eta_0):=\inf_{a\in\mathcal A(\eta_0)}J(\eta_0;a).
\end{equation}
The cost is extended-real valued; finite cost is not part of admissibility.

For semiconvexity and entropy estimates we use the derived {\it fluxes} $w_{ij}=\eta_i a_{ij}$ and reference fluxes $r_{ij}(\eta)=\eta_iq_{ij}(\eta)$. Let
\[
W:=\left\{w\in\R^{d\times d}: w_{ij}\ge0\ (i\ne j),\quad w_{ii}=-\sum_{j\ne i}w_{ij}\right\},
\]
and define $b_i(w):=\sum_{j\ne i}w_{ji}-\sum_{j\ne i}w_{ij}$. The fluxes are derived variables rather than a second primitive admissible class. The flux Lagrangian is
\[
L(\eta,w):=C(\eta)+\frac1\beta\sum_{i\ne j}\ell\bigl(w_{ij},r_{ij}(\eta)\bigr).
\]
Whenever $w_{ij}=\eta_i a_{ij}$, the perspective identity $\ell(\eta_i a_{ij},\eta_iq_{ij})=\eta_i\ell(a_{ij},q_{ij})$ holds, with the natural convention on a zero-mass row.

For every $\eta\in\D$ and $p\in\mathbb R^d$, the {\it Hamiltonian} is: 
\begin{equation}\label{eq:hamiltonian-variational}
H(\eta,p)
=
\inf_{a\in A}
\left\{
f(\eta,a)+p\cdot(\eta a)
\right\},
\end{equation}
where
\[
p\cdot(\eta a)
=
\sum_{i\ne j}\eta_i a_{ij}(p_j-p_i).
\]
Indeed,
\[
\inf_{a\ge0}
\left\{
a(p_j-p_i)+\frac1\beta\ell(a,q_{ij}(\eta))
\right\}
=
\frac{q_{ij}(\eta)}{\beta}
\left(1-e^{-\beta(p_j-p_i)}\right).
\]
If $\eta_i>0$, the unique {\it edgewise minimizer} is
\[
a_{ij}=q_{ij}(\eta)e^{-\beta(p_j-p_i)}.
\]
When $\eta_i=0$, the corresponding row does not affect the variational
problem, which is why uniqueness of the minimizing rate matrix is
asserted only in $\Dint$.

The Hamiltonian can be explicitly written as
\begin{equation}\label{eq:hamiltonian}
H(\eta,p)=C(\eta)+\frac1\beta\sum_{i\ne j}\eta_iq_{ij}(\eta)\left(1-e^{-\beta(p_j-p_i)}\right).
\end{equation}
It is invariant under $p\mapsto p+c\one$, as required by the affine geometry of the simplex. The associated HJB equation is
\begin{equation}\label{eq:hjb}
\rho U(\eta)-H(\eta,DU(\eta))=0,\qquad \eta\in\D.
\end{equation}
Here derivatives are understood intrinsically on the affine hyperplane containing $\D$; only the differences $\partial_jU-\partial_iU$ are intrinsic.

{\color{black}
\begin{remark}[KL-constrained interpretation]\label{rem:kl-constraint}
For a fixed initial state $\eta_0$, write
\[
\begin{aligned}
\mathcal C_{\eta_0}(a)&:=\int_0^\infty e^{-\rho t}C(\eta(t))\dd t,\\
\mathcal K_{\eta_0}(a)&:=\int_0^\infty e^{-\rho t}\sum_{i=1}^d\eta_i(t)
\KL\bigl(a_i(t)\Vert q_i(\eta(t))\bigr)\dd t.
\end{aligned}
\]
The entropy-regularized problem is the Lagrangian formulation of cost minimization under a budget $R\ge0$ on the total discounted, population-weighted KL cost:
\[
\inf_{a\in\mathcal A(\eta_0)}\mathcal C_{\eta_0}(a)
\qquad\text{subject to}\qquad \mathcal K_{\eta_0}(a)\le R.
\]
Indeed, the Lagrangian at multiplier $\lambda=1/\beta$ is
\[
\mathcal C_{\eta_0}(a)+\frac1\beta\bigl(\mathcal K_{\eta_0}(a)-R\bigr)
=J(\eta_0;a)-\frac R\beta,
\]
which has the same minimizers as $J(\eta_0;\cdot)$. In particular, any finite-cost optimal control $a^\beta$ for the regularized problem also solves the constrained problem with $R=\mathcal K_{\eta_0}(a^\beta)$; this implication does not require convexity. Conversely, for a prescribed budget, the constrained problem admits this penalized representation whenever strong duality holds and a positive optimal multiplier is attained. Thus $1/\beta$ serves as the Lagrange multiplier associated with the KL budget.
\end{remark}
}

\subsection{Assumptions}\label{sec:assumptions}

We state separate assumptions for viscosity well-posedness and for regularity and stability. The first theorem uses only \Cref{ass:viscosity}.

\begin{assumption}[Viscosity-level hypotheses]\label{ass:viscosity}
The functions $q_{ij}:\D\to(0,\infty)$ are continuous for every $i\ne j$, and there are constants $0<q_{\min}\le q_{\max}<\infty$ such that
\[
q_{\min}\le q_{ij}(\eta)\le q_{\max},
\qquad
\eta\in\D.
\]
The state cost satisfies $C\in\mathcal{C}_b(\D)$.
\end{assumption}

A quantitative semiconvexity condition and $\mathcal{C}^{1,1}$ regularity enter in the second theorem, where we prove regularity and identify the Gibbs optimizer.

\begin{assumption}[Semiconvexity and regularity hypotheses]\label{ass:regularity}
In addition to \Cref{ass:viscosity}, each $q_{ij}$ and $C$ is of class
$\mathcal{C}^{1,1}$ on the affine simplex, in the sense of restriction
from a neighborhood. There exists a constant $K\ge0$ such that, with
$\Phi_K(\eta):=\frac K2\|\eta\|_2^2$, the modified flux Lagrangian
\begin{equation}\label{eq:modified-flux-lagrangian}
\widehat L_K(\eta,w)
:=
L(\eta,w)
+\rho\Phi_K(\eta)
-D\Phi_K(\eta)\cdot b(w)
=
L(\eta,w)
+\frac{\rho K}{2}\|\eta\|_2^2
-K\eta\cdot b(w)
\end{equation}
is jointly convex on $\D\times W$ as an extended-real-valued function.
\end{assumption}

The global exponential convergence theorem additionally assumes symmetry of the reference generator.

\begin{assumption}[Symmetric reference rates]\label{ass:symmetry}
For every $i\ne j$ and $\eta\in\D$,
$q_{ij}(\eta)=q_{ji}(\eta)$.
\end{assumption}

\begin{remark}[Role and scope of the assumptions]
\Cref{ass:viscosity} contains the hypotheses needed for the dynamic-programming and viscosity arguments. Continuity and uniform positivity of the reference rates ensure the required continuity and boundary structure of the Hamiltonian, while the uniform upper bound enters the entropy--flux estimates. No differentiability or convexity assumption is needed at this level. This is similar in spirit to the viscosity regime in Cecchin \cite{Cecchin2021}, where no convexity assumption is imposed. A notable difference is that Cecchin works with a compact action space and bounded transition rates, whereas our rate-control set is unbounded; here entropy coercivity provides the macroscopic flux control needed in the dynamic-programming argument.

\Cref{ass:regularity} adds the regularity and second-order lower bound used to obtain the Gibbs feedback. The $\mathcal{C}^{1,1}$ assumptions on $C$ and the $q_{ij}$ are used in the local second-difference estimates and, after identification of the optimal feedback, in the local Lipschitz continuity of the closed-loop vector field. Joint convexity of \eqref{eq:modified-flux-lagrangian} implies convexity of $V+\Phi_K$, hence global $K$-semiconvexity of $V$. This is closely related to the convexity assumptions used by Cecchin \cite{Cecchin2021} to obtain $\mathcal{C}^{1,1}$ regularity, where the corresponding flux formulation is assumed jointly convex in the flux and population variables. The case $K=0$ recovers ordinary joint convexity of our original flux Lagrangian, while $K>0$ allows the corresponding semiconvex extension through the quadratic correction $\Phi_K$.

Finally, \Cref{ass:symmetry} is used directly in the free-energy dissipation identity, where the two orientations of each edge are paired. State independence is not required for the stability theorem. The additional quantitative restriction $2\beta K<1$ in \Cref{thm:convergence} ensures that the strong convexity contributed by the entropy dominates the possible negative curvature of $V$. This differs from other approaches to long-time behavior of nonlinear Markov dynamics. In the continuous-state McKean--Vlasov setting, Malrieu \cite{Malrieu2001} assumes a convex even interaction potential and a uniformly convex confinement potential and derives quantitative convergence through the associated particle system and logarithmic Sobolev estimates. In the finite-state setting, Erbar, Fathi, and Schlichting \cite{ErbarFathiSchlichting2020} consider rates reversible with respect to a local Gibbs measure and formulate the relevant convexity condition through entropic Ricci curvature in a nonlinear discrete transportation geometry. Our argument instead uses the Euclidean geometry of the simplex: symmetry yields the edgewise dissipation identity, while semiconvexity together with $2\beta K<1$ supplies the strong convexity needed for the global entropy-production estimate.
\end{remark}

\begin{example}[A $K=0$ state-dependent class satisfying \Cref{ass:regularity}]
Let
\[
q_{ij}(\eta)=\alpha_{ij}-b_{ij}\eta_i,
\qquad
\alpha_{ij}>b_{ij}\ge0,
\]
and let
\[
C(\eta)=\frac{\kappa}{2}\|\eta\|_2^2,
\qquad
\kappa\ge
\frac{2}{\beta}\max_i\sum_{j\ne i}b_{ij}.
\]
Then $r_{ij}(\eta)=\alpha_{ij}\eta_i-b_{ij}\eta_i^2$ is concave, and the original flux Lagrangian is jointly convex. Indeed, writing
\[
\ell(w,r)=\bigl[w\log(w/r)-w\bigr]+r,
\]
the bracketed function is jointly convex and nonincreasing in $r$, so its composition with the concave function $r_{ij}$ is convex. The stated bound on $\kappa$ makes $C+\beta^{-1}\sum_{i\ne j}r_{ij}$ convex. Thus this family satisfies \Cref{ass:regularity} with $K=0$.
\end{example}

\begin{example}[State-dependent symmetric rates covered by \Cref{thm:convergence}]
A concrete sufficient condition for joint convexity of \eqref{eq:modified-flux-lagrangian} can be expressed in terms of the reference fluxes. With
$r_{ij}(\eta):=\eta_iq_{ij}(\eta)$, set
\[
\widetilde r_{ij}^{\,K}(\eta)
:=
r_{ij}(\eta)e^{\beta K(\eta_j-\eta_i)}.
\]
Since $(w,r)\mapsto w\log(w/r)-w$ is jointly convex and nonincreasing in $r$, it is sufficient that every $\widetilde r_{ij}^{\,K}$ be concave and that
\[
\eta\longmapsto
C(\eta)
+\frac{\rho K}{2}\|\eta\|_2^2
+\frac1\beta\sum_{i\ne j}r_{ij}(\eta)
\]
be convex.

For instance, when $d\ge3$, let
\[
q_{ij}(\eta)=q_{ji}(\eta)
=
\bar q_{ij}e^{-\gamma(\eta_i+\eta_j)},
\qquad
\bar q_{ij}=\bar q_{ji}>0,
\qquad
0<\gamma<\frac12,
\]
and take $K=\gamma/\beta$. Then
$\widetilde r_{ij}^{\,K}(\eta)=\bar q_{ij}\eta_i e^{-2\gamma\eta_i}$,
which is concave because $x\mapsto xe^{-2\gamma x}$ is concave on $[0,1]$ for $\gamma\le1$. The remaining state term can be made convex, for example, by choosing $C$ sufficiently strongly convex. Since $2\beta K=2\gamma<1$, this gives genuinely state-dependent symmetric rates satisfying the hypotheses of \Cref{thm:convergence}.
\end{example}

\begin{example}[Constant symmetric reference rates]
Suppose $q_{ij}(\eta)\equiv\bar q_{ij}=\bar q_{ji}>0$ for $i\ne j$,
and $C$ is convex and of class $\mathcal{C}^{1,1}$ on the affine simplex.
Then \Cref{ass:viscosity,ass:regularity,ass:symmetry} hold with $K=0$.
Indeed, each reference flux $r_{ij}(\eta)=\eta_i\bar q_{ij}$ is affine,
and joint convexity of the relative-entropy integrand implies joint
convexity of the flux Lagrangian. Consequently, the condition
$2\beta K<1$ holds for every $\beta>0$.
\end{example}

{\color{black}
\begin{example}[A fixed state-dependent model for all temperatures]
\label{ex:temperature-state-dependent}
Let $d=2$, write $\eta=(x,1-x)$, fix $q_0>0$ and $0<\sigma<1$,
and take any fixed convex $\mathcal C^{1,1}$ running cost. Set
\begin{equation}\label{eq:temperature-state-dependent-rates}
q_{12}(\eta)=q_{21}(\eta)=q_0e^{-\sigma x},
\qquad K_\beta=\frac{\sigma}{2\beta}.
\end{equation}
The modified reference fluxes in the sufficient criterion above are
$\widetilde r_{12}^{\,K_\beta}(x)=q_0e^{\sigma/2}xe^{-2\sigma x}$
and $\widetilde r_{21}^{\,K_\beta}(x)=q_0e^{-\sigma/2}(1-x)$.
The first is concave on $[0,1]$, since its second derivative has
the sign of $\sigma x-1$, and the second is affine. Moreover,
$r_{12}+r_{21}=q_0e^{-\sigma x}$ is convex. The remaining state
term in that criterion is therefore convex for every $\beta>0$.
Thus \Cref{ass:regularity,ass:symmetry} hold with fixed $C$ and $q$
for all temperatures. Here $\beta K_\beta=\sigma/2$ does not tend
to zero, but $1-2\beta K_\beta=1-\sigma$ is uniformly positive.
This example is covered by the small-inverse-temperature feedback
and stability results below, as well as the large-$\beta$
equilibrium conclusion.
\end{example}
}

\section{Main results}\label{sec:main-results}

{\color{black}
This section presents four groups of main results, following the
hierarchy of assumptions. First, under \Cref{ass:viscosity}, we
characterize the value function as the unique bounded viscosity
solution on the closed simplex. Monotonicity and both value limits
are refinements at this level, with Lipschitz reference rates for the
small-$\beta$ limit. Second, under \Cref{ass:regularity}, we establish
global semiconvexity, intrinsic $\mathcal C^{1,1}$ regularity, and the
optimal Gibbs feedback; parameter-uniform regularity yields the
small-$\beta$ feedback and trajectory limits. Third, under symmetry
and $2\beta K<1$, we prove global exponential stability, followed by
its uniform small-$\beta$ refinement and the extreme-temperature
equilibrium limits under their respective hypotheses. Fourth, under
the state-independent-reference and potential conditions, we obtain
the planner-to-MFG correspondence.
}

\subsection{Main result 1: Viscosity solutions on the closed simplex}

Let
\[
\mathsf H
:=
\left\{
\eta\in\mathbb R^d:\sum_{i=1}^d\eta_i=1
\right\}.
\]
Test functions are restrictions to $\D$ of $\mathcal{C}^1$ functions
defined on a neighborhood of $\D$ in $\mathsf H$. Their differentials
are intrinsic covectors on $\T$. Equivalently, one may use any ambient
gradient representative, since two representatives differ by a multiple
of $\one$ and
\[
H(\eta,p+c\one)=H(\eta,p).
\]

\begin{definition}[Viscosity solution on the closed simplex]
\label{def:closed-simplex-viscosity}
A bounded upper semicontinuous function $u:\D\to\mathbb R$ is a
viscosity subsolution of \eqref{eq:hjb} on $\D$ if, whenever
$u-\phi$ has a local maximum at $\eta_0\in\D$ relative to $\D$, one has
\[
\rho u(\eta_0)-H(\eta_0,D\phi(\eta_0))\le0.
\]

A bounded lower semicontinuous function $u:\D\to\mathbb R$ is a
viscosity supersolution of \eqref{eq:hjb} on $\D$ if, whenever
$u-\phi$ has a local minimum at $\eta_0\in\D$ relative to $\D$, one has
\[
\rho u(\eta_0)-H(\eta_0,D\phi(\eta_0))\ge0.
\]

A continuous function is a viscosity solution on the closed simplex if
it is both a subsolution and a supersolution on $\D$.
\end{definition}

This is the closed-domain viscosity convention used in
\cite[Definition~1]{Kraaij2016}. It imposes both viscosity inequalities
at boundary points and is therefore stronger there than the usual
Soner state-constraint convention, in which the subsolution inequality
is imposed only on $\Dint$. The dynamic-programming argument below
establishes this stronger boundary subsolution property directly.

\begin{theorem}[Closed-simplex viscosity characterization]
\label{thm:viscosity}
Under \Cref{ass:viscosity}, the value function in
\eqref{eq:value-function} is continuous on $\D$ and is the unique
bounded viscosity solution of \eqref{eq:hjb} on the closed simplex in
the sense of \Cref{def:closed-simplex-viscosity}.
\end{theorem}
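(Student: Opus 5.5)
We proceed in four steps: boundedness and the dynamic programming principle (DPP), continuity of $V$, the two viscosity inequalities at every point of $\D$ (including the boundary), and uniqueness via Kraaij's comparison principle \cite{Kraaij2016}.

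\textbf{Bounds and DPP.} The reference control $a\equiv q(\eta(\cdot))$, realized as a closed-loop continuous feedback whose ODE has a global solution by Peano and invariance of $\D$ (or via a piecewise-constant approximation), has zero entropy cost. Hence $\|C\|_\infty/\rho\ge V\ge -\|C\|_\infty/\rho$. The DPP
\[
V(\eta_0)=\inf_a\Bigl\{\int_0^h e^{-\rho t}f(\eta(t),a(t))\dd t+e^{-\rho h}V(\eta(h))\Bigr\}
\]
follows by concatenation of open-loop controls. No measurable selection issues arise, since the controls are deterministic and $\eta$ is determined by $a$.

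\textbf{Entropy coercivity.} The key tool is the Young/Fenchel inequality $x\lambda\le \ell(x,r)+r(e^{\lambda}-1)$ applied to fluxes $w_{ij}=\eta_i a_{ij}$ with a fixed $\lambda>0$. It gives
\[
\int_0^h\sum_{i\ne j}w_{ij}\dd t\le \frac{1}{\lambda}\int_0^h\sum\ell(w_{ij},r_{ij})\dd t+\frac{h\,d^2q_{\max}(e^\lambda-1)}{\lambda}.
\]
Since $|\dot\eta|_1\le 2\sum w_{ij}$, the displacement $\sup_{t\le h}|\eta(t)-\eta_0|$ of any $\epsilon$-optimal control over $[0,h]$ is bounded by $\lambda^{-1}\beta(\text{entropy budget})+O_\lambda(h)$. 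Comparing with the reference control through the DPP bounds the entropy budget on $[0,h]$ by $O(h)+\epsilon$ up to the oscillation of $V$.

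\textbf{Continuity.} To prove continuity, given nearby $\eta_0,\eta_0'$ I would steer $\eta_0'$ to $\eta_0$ in short time $\tau\sim|\eta_0-\eta_0'|$ along edge fluxes of size $O(1)$. The entropy cost of this move is finite and small: $\ell(w,r)$ with $r=\eta_i q_{ij}$ may be small near the boundary, but $w\log(w/r)$ stays bounded by $|w\log w|+w|\log(\eta_iq_{\min})|$, and one pushes mass out of a coordinate only while it is still present. The harder direction, moving mass into a boundary coordinate, is cheap because $r_{ji}$ uses the source mass $\eta_j$, which is not small. After reaching $\eta_0$ we follow a near-optimal control for $\eta_0$. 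I expect a clean version uses the flux identity $\ell(\eta_i a,\eta_i q)=\eta_i\ell(a,q)$: take $a$ constant of order $1$ on $[0,\tau]$ with exact endpoint, which gives $|V(\eta_0)-V(\eta_0')|\le\omega(|\eta_0-\eta_0'|)$.

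\textbf{Viscosity inequalities.} For the supersolution property at $\eta_0$ where $V-\phi$ has a local minimum, take $\epsilon h$-optimal controls on $[0,h]$. Entropy coercivity keeps $\eta(t)$ in the test neighborhood, which is the central point: it works at the boundary and without bounded rates. Then
\[
0\ge \int_0^h e^{-\rho t}\bigl[f(\eta,a)+D\phi(\eta)\cdot(\eta a)-\rho\phi(\eta)\bigr]\dd t-\epsilon h ,
\]
and the pointwise bound $f+D\phi\cdot(\eta a)\ge H(\eta,D\phi)$, together with continuity of $H$ in $\eta$, gives $\rho V(\eta_0)\ge H(\eta_0,D\phi(\eta_0))$ after dividing by $h$. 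The subsolution inequality, including at boundary points, uses the constant-rate control $a_{ij}=q_{ij}(\eta_0)e^{-\beta(p_j-p_i)}$ on $[0,h]$. This control is admissible from any $\eta_0\in\D$ and keeps $\eta$ in $\D$ automatically, which is why the stronger Kraaij boundary convention holds. Continuity of the integrand yields $\rho V-H\le 0$. Rows with $\eta_{0,i}=0$ contribute zero to both $f$ and $H$, consistent with the formula for $H$.

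\textbf{Uniqueness.} The explicit Hamiltonian is
\[
H(\eta,p)=C(\eta)+\beta^{-1}\sum_{i\ne j}\eta_iq_{ij}(\eta)\bigl(1-e^{-\beta(p_j-p_i)}\bigr).
\]
Setting $U=\beta u$, or passing to the negated form, puts the equation in the form $\rho U - \sum\eta_i q_{ij}(\eta)(1-e^{-(p_j-p_i)})=\beta C$. This is Kraaij's finite-state mean-field Hamiltonian: source mass $\eta_i$, continuous rates bounded below by $q_{\min}>0$, and $\sup$ versus $\inf$ handled by the sign change $U\mapsto-U$ if needed. His comparison principle for bounded sub- and supersolutions on the closed simplex then applies; the hypotheses to check are continuity and positivity of the rates, both provided by \Cref{ass:viscosity}. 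The main obstacle is matching Kraaij's exact formulation, namely the sign convention $H$ versus $-H$ and $e^{p_j-p_i}-1$ versus $1-e^{-(p_j-p_i)}$. I would handle this by the transformation $U(\eta)=-\beta V(\eta)$, which converts $1-e^{-\beta(\cdot)}$ into $e^{(\cdot)}-1$ with the correct monotonicity in $U$.
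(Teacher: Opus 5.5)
Your proposal has a genuine gap in the continuity step. The rest of your plan depends on that step, because you apply the viscosity tests to $V$ itself and \Cref{def:closed-simplex-viscosity} requires semicontinuity.

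\textbf{Why exact steering fails.} Exact steering into a face is impossible in the admissible class. For any locally integrable generator, the state equation gives $\eta_i(t)\ge\eta_i(0)\exp\bigl(-\int_0^t\sum_{j\ne i}a_{ij}(s)\dd s\bigr)$, which is strictly positive whenever $\eta_i(0)>0$. So the support of a trajectory can only grow.
\begin{itemize}
\item A constant generator ``of order $1$ with exact endpoint'' therefore never reaches $\partial\D$ from a point with larger support.
\item Emptying coordinate $i$ at a finite time forces $\int\sum_{j}a_{ij}=\infty$. For instance, a constant outgoing flux needs the rate $w/\eta_i(t)$, which is not integrable up to the hitting time. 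Its entropy cost is finite, but the control is not in $\mathcal A$.
\end{itemize}

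\textbf{What is left unproved.} Take $\eta_0\in\partial\D$ and $\eta'\to\eta_0$ with $\eta'_i>0=\eta_{0,i}$. Steering outward from $\eta_0$ to $\eta'$ gives you only $V(\eta_0)\le V(\eta')+o(1)$. It does not give $\limsup_{\eta'\to\eta_0}V(\eta')\le V(\eta_0)$. No ``steer, then follow a near-optimal control from $\eta_0$'' argument can supply this upper bound: every trajectory from $\eta'$ stays among points with $\eta_i>0$, so the estimate becomes circular. Upper semicontinuity at boundary points is exactly the missing piece.

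\textbf{How the paper avoids this.} The paper never proves continuity directly.
\begin{itemize}
\item It proves the subsolution inequality for the envelope $V^*$ and the supersolution inequality for $V_*$ on all of $\D$. It takes $\eta_n\to\eta_0$ with $V(\eta_n)\to V^*(\eta_0)$ (resp.\ $V_*(\eta_0)$), and $h_n\downarrow0$ with $\delta_n/h_n\to0$, where $\delta_n$ is the touching defect.
\item Kraaij's comparison applied to $-\beta V_*$ and $-\beta V^*$ then gives $V^*\le V_*$, which yields continuity and uniqueness together.
\end{itemize}
Your subsolution argument with constant Gibbs rates, your entropy-coercivity supersolution argument, and your transformation $u=-\beta U$ are the paper's. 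They carry over to the envelopes with only this modification.

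\textbf{A direct alternative.} If you prefer to prove continuity directly, obtain the missing upper bound by splitting the population rather than steering it. Write $\eta'=(1-s)\eta_0+s\zeta$ with $\zeta\in\D$ and $s\to0$. Run an $\varepsilon$-optimal control of $\eta_0$ on the first sub-population and the zero control on the second. Bound the extra cost using joint convexity and positive homogeneity of $\ell$, uniform continuity of $q$ and $C$, and the discounted flux bound.

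\textbf{A smaller point.} The near-optimal entropy budget on $[0,h]$ is only $O(1)$, because it contains $V(\eta_0)-e^{-\rho h}V(\eta(h))$. A fixed $\lambda$ in the Young inequality therefore does not make the displacement vanish. You need $\lambda=\lambda(h)\to\infty$ with $he^{\lambda}\to0$, e.g.\ $\lambda=\frac12\log(1/h)$ as in the paper.
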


\emph{Sketch of proof.}
We first establish the dynamic programming principle in \Cref{lem:dpp}. The admissible class is stable under time translation and finite concatenation: local integrability of the transition rates is preserved by both operations, and uniqueness of the linear state equation allows the corresponding trajectories to be spliced at any fixed time. Splitting the discounted cost and concatenating with an $\varepsilon$-optimal continuation therefore gives the dynamic programming identity directly, without requiring the existence of an optimal control or a measurable-selection argument.

The viscosity inequalities then follow from the dynamic programming principle. For the subsolution inequality, one tests over a short time interval with a fixed finite rate matrix attaining the pointwise minimum in the Hamiltonian and lets the length of the interval tend to zero. The supersolution inequality requires an additional argument because a nearly optimal short-time control need not have uniformly bounded transition rates. Near optimality nevertheless gives a uniform bound on its entropy expenditure. The entropy dual inequality
$\ell(a,q)\geq \gamma a-q(e^\gamma-1)$
then controls the integrated macroscopic fluxes $\eta_i a_{ij}$; choosing $\gamma=\frac12\log(1/h)$ shows that the total transported mass over an interval of length $h$ tends to zero with $h$. Since the norm of the state velocity is controlled by the total flux, the corresponding nearly optimal trajectory remains in the touching neighborhood. The usual test-function and chain-rule argument can therefore be applied and yields the supersolution inequality up to and including the boundary.

Finally, after the change of unknown $u=-\beta U$, the HJB equation becomes the finite-state mean-field Hamilton--Jacobi equation with jump intensities
$v(i,j,\eta)=\eta_iq_{ij}(\eta)$. This is precisely the source-mass structure covered by Kraaij's comparison theorem \cite[Theorem~4]{Kraaij2016}. Comparison identifies the upper and lower semicontinuous envelopes of the value function, proving continuity and uniqueness of the bounded viscosity solution on the closed simplex. The details are given in \Cref{app:viscosity}.

{\color{black}
To make the dependence on the inverse-temperature parameter explicit,
write $J_\beta$ and $V_\beta$ for the cost and value in
\eqref{eq:value-function} when the coefficient of the KL penalty is
$1/\beta$.

\begin{theorem}[Dependence on the inverse-temperature parameter]
\label{thm:beta-limits}
Suppose \Cref{ass:viscosity} holds, and keep $\rho$, $C$, and $q$
fixed as $\beta$ varies.

\begin{enumerate}[label=(\roman*)]
\item If $0<\beta_1<\beta_2$, then
\[
V_{\beta_1}(\eta)\ge V_{\beta_2}(\eta),
\qquad \eta\in\D.
\]

\item Suppose additionally that every $q_{ij}$ is Lipschitz continuous
on $\D$. Complete the reference rates to a generator by setting
\[
q_{ii}(\eta):=-\sum_{j\ne i}q_{ij}(\eta),
\]
let $\eta^q(\cdot;\eta_0)$ be the unique solution of
\[
\dot\eta^q(t)
=
\eta^q(t)q\bigl(\eta^q(t)\bigr),
\qquad
\eta^q(0)=\eta_0,
\]
and define the value of the reference dynamics by
\[
V^q(\eta_0)
:=
\int_0^\infty e^{-\rho t}
C\bigl(\eta^q(t;\eta_0)\bigr)\dd t.
\]
Then
\[
\lim_{\beta\downarrow0}
\|V_\beta-V^q\|_\infty=0.
\]

\item Let
\[
C_{\min}:=\min_{\eta\in\D}C(\eta).
\]
Then, without any additional Lipschitz or convexity assumption,
\[
\lim_{\beta\to\infty}
\left\|
V_\beta-\frac{C_{\min}}{\rho}
\right\|_\infty=0.
\]
\end{enumerate}

Consequently, when the reference rates are Lipschitz,
\[
\frac{C_{\min}}{\rho}
\le V_\beta(\eta)\le V^q(\eta),
\qquad \eta\in\D,
\]
and $V_\beta$ decreases monotonically from the reference-dynamics
value to $C_{\min}/\rho$ as $\beta$ increases from $0$ to $\infty$.
\end{theorem}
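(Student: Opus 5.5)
Part (i) follows directly from the cost functional. For each fixed admissible control $a\in\mathcal A(\eta)$, the trajectory $\eta(\cdot)$ does not depend on $\beta$. The integrand $C(\eta(t))+\beta^{-1}\sum_i\eta_i\KL(a_i\Vert q_i(\eta))$ is nonincreasing in $\beta$, because the KL term is nonnegative. Hence $J_{\beta_1}(\eta;a)\ge J_{\beta_2}(\eta;a)$ for every $a$, and taking the infimum gives $V_{\beta_1}\ge V_{\beta_2}$.

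For (iii), the lower bound $V_\beta\ge C_{\min}/\rho$ is immediate since the entropy term is nonnegative. For the upper bound, fix a minimizer $\eta^\ast$ of $C$ and $\varepsilon>0$. Pick $\delta>0$ with $C\le C_{\min}+\varepsilon$ on $B_\delta(\eta^\ast)\cap\D$, using uniform continuity on the compact simplex. From any $\eta_0$, I would build a fast-steering control on $[0,\tau]$ with $\tau=\tau(\beta)\downarrow0$. A concrete choice is to first move mass toward the barycentre at large constant rates $a_{ij}=\lambda$ (the chain with all rates $\lambda$ mixes at rate $d\lambda$), then to a point near $\eta^\ast$ with a bounded interior correction. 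To avoid boundary issues, the target could be an interior point $\eta^{\ast,\delta}$ within $\delta$ of $\eta^\ast$. After reaching $\eta^{\ast,\delta}$ at time $\tau$, the control holds the state there by a stationary generator $a^{\mathrm{hold}}$ with bounded entries and $\eta^{\ast,\delta}a^{\mathrm{hold}}=0$; for example, $a^{\mathrm{hold}}_{ij}\propto\eta^{\ast,\delta}_j$ makes $\eta^{\ast,\delta}$ invariant. Its KL cost is a constant $M_\delta$, uniformly bounded using $q_{\min}\le q\le q_{\max}$. The steering phase has entropy cost bounded by $\lambda\log(\lambda/q_{\min})\tau$ up to constants. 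Taking $\lambda=1/\tau$ makes this $O(\log(1/\tau))$, uniformly in $\eta_0$. The total cost is then at most
\[
\|C\|_\infty\tau+\frac{C_{\min}+\varepsilon}{\rho}+\frac{1}{\beta}\Bigl(c\log\tfrac1\tau+\frac{M_\delta}{\rho}\Bigr).
\]
Choosing $\tau=1/\beta$ and sending $\beta\to\infty$ then $\varepsilon\to0$ gives uniform convergence. The main care needed is that the steering time and cost are uniform over $\eta_0\in\D$, including boundary starts; the all-$\lambda$ phase handles this because it brings every initial state into a fixed interior compact set in time $O(1/\lambda)$.

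For (ii), the reference control $a=q(\eta(t))$ has zero KL cost. It is admissible because $q$ is bounded, and it produces $\eta^q$, which is well defined by the Lipschitz assumption. So $V_\beta\le V^q$ for every $\beta$. For the reverse inequality, fix $\eta_0$ and take an $\varepsilon$-optimal control $a^\beta$ with $\varepsilon=\beta$. Since $J_\beta(\eta_0;a^\beta)\le V^q+\beta\le\|C\|_\infty/\rho+\beta$ and $C$ is bounded, the discounted entropy expenditure satisfies
\[
\mathcal K_{\eta_0}(a^\beta)\le\beta\Bigl(\frac{2\|C\|_\infty}{\rho}+\beta\Bigr)=O(\beta).
\]
The key step is converting this into flux closeness. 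Apply the dual inequality $\ell(x,r)\ge\gamma(x-r)-r(e^\gamma-1-\gamma)$, in both signs of $\gamma$, to $x=w_{ij}=\eta_ia_{ij}$ and $r=r_{ij}(\eta)$. Optimizing over $\gamma$ bounds $\int_0^T|w_{ij}-r_{ij}(\eta(t))|\,dt$ by roughly $\sqrt{T q_{\max}\,e^{\rho T}\mathcal K}+e^{\rho T}\mathcal K$ (a Pinsker-type estimate). Then
\[
\eta^\beta(t)-\eta^q(t)=\int_0^t\Bigl[b\bigl(w^\beta(s)\bigr)-\eta^q(s)q\bigl(\eta^q(s)\bigr)\Bigr]\,ds,
\]
and $b(r(\eta))=\eta q(\eta)$ is Lipschitz. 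Gronwall therefore gives $\sup_{t\le T}|\eta^\beta-\eta^q|\le e^{LT}O(\sqrt\beta)$, uniformly in $\eta_0$.

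Splitting $\int_0^\infty$ at $T$, using uniform continuity of $C$ on $[0,T]$ and the tail bound $2\|C\|_\infty e^{-\rho T}/\rho$, yields $V_\beta\ge V^q-o(1)$ uniformly. I expect this entropy-to-flux step to be the main obstacle, since it must hold uniformly with unbounded rates and near the boundary. The final consequence combines (i)–(iii) with the bounds $C_{\min}/\rho\le V_\beta\le V^q$.
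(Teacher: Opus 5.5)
Your proposal is correct. Parts (i) and (ii) follow the paper's proof. The reference control gives $V_\beta\le V^q$, and a $\beta$-optimal control has discounted entropy $O(\beta)$. Entropy coercivity turns this into finite-horizon flux closeness, and Gronwall plus a tail split at $T$ closes the squeeze $J_C(\eta_0;a^\beta)-\beta\le V_\beta\le V^q$.

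The only difference in (ii) is the elementary inequality. The paper uses $(\sqrt x-\sqrt r)^2\le\ell(x,r)$, hence $|x-r|\le\ell(x,r)+2\sqrt{r\,\ell(x,r)}$, followed by Cauchy--Schwarz. You use the Legendre dual $\ell(x,r)\ge\gamma(x-r)-r(e^\gamma-1-\gamma)$. Both give a bound of order $e^{\rho T}\mathcal K+\sqrt{Tq_{\max}e^{\rho T}\mathcal K}$. To control $\int_0^T|w_{ij}-r_{ij}|\,\dd t$ you must choose the sign of $\gamma$ pointwise in time. A constant $\gamma$ only controls $\sup_{t\le T}|\int_0^t(w_{ij}-r_{ij})\,\dd s|$, which in fact also suffices because $b$ is linear.

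Part (iii) takes a genuinely different and heavier route. The paper uses one constant generator $a^M_{ij}=M\bar\eta_j$ for all time. This has the same form as your holding generator $a^{\mathrm{hold}}_{ij}\propto\eta^{*,\delta}_j$, but uses the true minimizer and a large constant. Its trajectory $\eta^M(t)=\bar\eta+e^{-Mt}(\eta_0-\bar\eta)$ is explicit from every initial state, including boundary ones. Since $\ell(0,q)=q$ is finite, it works even when $\bar\eta$ lies on $\partial\D$. The entropy rate is at most $M(\log M+c_q)+(d-1)q_{\max}$, and $M=\sqrt\beta$ finishes the proof. There is no mixing phase, no interior target $\eta^{*,\delta}$, no exact-arrival construction, and no $\varepsilon$--$\delta$ bookkeeping.

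Your construction can be completed, but its second phase is underspecified. The ``bounded interior correction'' must itself run in time $O(\tau)$ with rates of order $1/\tau$. A correction with bounded rates takes time of order one, and then the $\|C\|_\infty\tau$ term no longer vanishes. A straight-line path driven by a constant transport flux does the job, with rates $O\bigl(1/(\tau\min_i\eta^{*,\delta}_i)\bigr)$. The constant $c$ in your entropy bound therefore depends on $\delta$, which is harmless because $\delta$ is fixed before $\beta\to\infty$.

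What your route buys is exact arrival at the target in finite time and a slightly smaller entropy contribution: $\log\beta/\beta$ rather than $\log\beta/\sqrt\beta$. The statement needs neither.
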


\emph{Sketch of proof.}
Monotonicity follows immediately because $1/\beta$ is the coefficient
of a nonnegative cost. As $\beta\downarrow0$, comparison with the
reference control shows that the discounted KL expenditure of every
nearly optimal control is $O(\beta)$. Entropy coercivity then forces
its population flux and trajectory to approach those of the reference
dynamics on every finite time interval. As $\beta\to\infty$, one may
use rates of order $\sqrt{\beta}$ to steer the population toward a
minimizer of $C$ in a vanishing initial layer, while the corresponding
KL cost divided by $\beta$ tends to zero. The full proof is given
in \Cref{app:viscosity}.

\begin{remark}[Value convergence versus feedback convergence]
\label{rem:beta-feedback-limit}
The uniform convergence $V_\beta\to V^q$ in
\Cref{thm:beta-limits} does not, by itself, imply convergence of the
Gibbs feedback, because the feedback depends on the scaled intrinsic
gradient differences
\[
\beta\left[
\bigl(D_{\T}V_\beta\bigr)_j
-
\bigl(D_{\T}V_\beta\bigr)_i
\right].
\]
\color{black}
Uniform convergence of the value functions does not generally control
their derivatives. \Cref{thm:uniform-small-beta-feedback} shows that,
under the additional
parameter-uniform regularity condition
$\sup_{0<\beta\le\beta_0}\beta K_\beta<\infty$, the HJB equation and semiconvexity on short segments control
these scaled gradient differences directly. In particular, convergence
or uniform boundedness of $D_{\T}V_\beta$ itself is not required.

The situation as $\beta\to\infty$ is different. In that regime the
relative-entropy penalty vanishes, and
\Cref{thm:beta-limits} supplies no compactness or convergence result
for the optimal rates. Increasingly fast rates and an impulsive
limiting behavior are possible, and no large-$\beta$ feedback limit is
asserted here.
\end{remark}
}

\subsection{Main result 2: Regular solutions and Gibbs form}

The second result establishes the regularity needed to identify the optimal control explicitly. Under \Cref{ass:regularity}, the interpolation argument is applied to the modified flux Lagrangian \eqref{eq:modified-flux-lagrangian}, yielding convexity of $V+\Phi_K$ and therefore global $K$-semiconvexity of $V$. The complementary semiconcavity argument is first local in the interior: the same nearly optimal open-loop rate control is applied to nearby initial conditions, and the resulting perturbations remain controlled relative to the evolving population mass. This relative control prevents the logarithmic entropy terms from producing singular second-order errors and ensures that potentially large transition rates enter the estimates only through the corresponding macroscopic fluxes, which are controlled by entropy coercivity. After the Gibbs feedback is identified and its rates are bounded uniformly, the perturbation argument becomes uniform up to the boundary. Combining the resulting global semiconcavity with global semiconvexity gives intrinsic $\mathcal{C}^{1,1}$ regularity on $\D$.

The HJB equation can therefore be interpreted pointwise in the interior. Its minimization in the rate variables separates over directed edges, and the strict convexity of the entropy term gives a unique minimizer on each edge. This yields the explicit Gibbs feedback stated below. The semiconvexity estimate also replaces the convexity bound used to obtain uniform intrinsic-gradient bounds, and hence uniform upper and lower bounds on the optimal rates.

\begin{theorem}[Global regularity and Gibbs feedback]\label{thm:gibbs}
Under \Cref{ass:regularity}, the function $V+\Phi_K$ is convex on $\D$, equivalently $V$ is globally $K$-semiconvex. Moreover, the intrinsic gradient $D_{\T}V$, initially defined on $\Dint$, extends to a globally Lipschitz map on $\D$; equivalently, $V$ is intrinsically $\mathcal{C}^{1,1}$ on the closed simplex. For $\eta\in\Dint$, the Hamiltonian has the unique minimizing rate matrix
\begin{equation}\label{eq:gibbs-feedback}
{\color{black}
a^*_{ij}(\eta)
=
q_{ij}(\eta)
\exp\left[
-\beta\left(
\bigl(D_{\T}V(\eta)\bigr)_j
-
\bigl(D_{\T}V(\eta)\bigr)_i
\right)
\right],
\qquad i\ne j.
}
\end{equation}
Moreover, there are constants $0<\underline a\le\overline a<\infty$, depending only on the model data, such that
\begin{equation}\label{eq:global-rate-bounds}
\underline a\le a^*_{ij}(\eta)\le\overline a,
\qquad \eta\in\Dint,\quad i\ne j.
\end{equation}
For every $\eta_0\in\Dint$, the closed-loop equation generated by \eqref{eq:gibbs-feedback} has a unique global solution remaining in $\Dint$ at every finite time. The feedback is optimal, and every optimal admissible rate control agrees with $a^*(\eta(\cdot))$ almost everywhere along its trajectory. \textcolor{black}{
Finally, the right-hand side of \eqref{eq:gibbs-feedback} extends
continuously to every $\eta\in\D$ and defines a minimizing selection
there. At a boundary state, each row $i$ with $\eta_i>0$ is uniquely
determined by \eqref{eq:gibbs-feedback}, whereas a row with $\eta_i=0$
does not enter the pointwise Hamiltonian minimization and may be chosen
arbitrarily. Thus the full minimizing rate matrix is generally
nonunique at the boundary.
}
\end{theorem}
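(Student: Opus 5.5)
The plan has four stages: semiconvexity, interior semiconcavity, rate bounds with closed-loop verification, and boundary extension.

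\emph{Stage 1: semiconvexity.} Fix $\eta^0,\eta^1\in\D$, $\lambda\in[0,1]$, and $\varepsilon$-optimal controls $a^0,a^1$. Pass to fluxes $w^k_{ij}(t)=\eta^k_i(t)a^k_{ij}(t)$, which satisfy the linear equation $\dot\eta=b(w)$. Form the convex combinations $\eta^\lambda=(1-\lambda)\eta^0+\lambda\eta^1$ and $w^\lambda=(1-\lambda)w^0+\lambda w^1$. By linearity, $\eta^\lambda$ solves $\dot\eta^\lambda=b(w^\lambda)$. Define $a^\lambda_{ij}:=w^\lambda_{ij}/\eta^\lambda_i$ on $\{\eta^\lambda_i>0\}$ and $0$ otherwise; this is admissible, with local integrability coming from finite entropy and the dual inequality. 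The chain rule gives
\[
\int_0^\infty e^{-\rho t}\bigl[\rho\Phi_K(\eta)-D\Phi_K(\eta)\cdot b(w)\bigr]\dd t=\Phi_K(\eta(0)),
\]
so $J(\eta_0;a)+\Phi_K(\eta_0)=\int e^{-\rho t}\widehat L_K(\eta,w)\dd t$. Joint convexity of $\widehat L_K$ from \Cref{ass:regularity} then gives convexity of $V+\Phi_K$ after letting $\varepsilon\to0$.

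\emph{Stage 2: interior semiconcavity.} For $\eta_0\in\Dint$ and $h\in\T$ small relative to $\min_i\eta_{0,i}$, apply the same open-loop $\varepsilon$-optimal control $a$ to $\eta_0\pm h$. By linearity the states are $\eta(t)\pm h(t)$ with $h(t)=hP^a(0,t)$. Positivity of $P^a$ gives $|h_i(t)|\le c\,\eta_i(t)$ with $c=\max_i|h_i|/\eta_{0,i}$, since $|h|\le c\,\eta_0$ componentwise and $P^a$ preserves this ordering. The quantity $V(\eta_0+h)+V(\eta_0-h)-2V(\eta_0)$ is then bounded by the second difference of the cost along the common control. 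For the $C$ part, $\mathcal C^{1,1}$ gives $O(|h|^2)$. For each entropy term $\eta_i\ell(a_{ij},q_{ij}(\eta))$, the $\eta_i$-linear part is exactly linear in $h$. The remaining dependence through $q_{ij}(\eta)$ is $\mathcal C^{1,1}$, and its second difference is of size $|h|^2\bigl(\eta_i a_{ij}/q_{\min}+\eta_i q_{\max}\bigr)$ up to constants; here the $\log q$ and $q$ terms are differentiated, and the relative bound $|h_i|\le c\eta_i$ absorbs the mass factor. The integral $\int e^{-\rho t}\sum\eta_ia_{ij}$ is bounded by entropy coercivity (the dual inequality with fixed $\gamma$ and the bound $J\le\|C\|_\infty/\rho$). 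This yields local semiconcavity on compacts of $\Dint$, which together with Stage 1 gives local $\mathcal C^{1,1}$. Then $V$ is differentiable in $\Dint$ and the viscosity equation of \Cref{thm:viscosity} holds pointwise. The edgewise minimization computed after \eqref{eq:hamiltonian} gives the unique minimizer \eqref{eq:gibbs-feedback}.

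\emph{Stage 3: rate bounds and verification.} For the bounds \eqref{eq:global-rate-bounds} I need $\beta|(D_\T V)_j-(D_\T V)_i|$ bounded uniformly. Each term $\frac{1}{\beta}\eta_iq_{ij}(1-e^{-\beta\Delta_{ij}})$ is bounded above by $q_{\max}/\beta$. Since $\rho V-C$ is bounded, the HJB gives a lower bound on the sum, which controls $\eta_i e^{-\beta\Delta_{ij}}$ above. This does not suffice near $\eta_i\to0$. There I use semiconvexity: moving mass along the direction $e_j-e_i$ from $\eta$, convexity of $V+\Phi_K$ bounds the directional derivative by difference quotients of the bounded $V$ over a segment of length comparable to the available mass in the opposite direction. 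I expect the boundary case, where the available segment shrinks, to be the main obstacle. My first attempt is to combine the HJB lower bound for the row with the largest mass and a monotonicity of the directional derivative along segments toward the interior, bounding all gradient differences by telescoping through a coordinate $k$ with $\eta_k\ge1/d$. With bounded rates, the closed-loop vector field is locally Lipschitz in $\Dint$, and \eqref{eq:interior-preservation} gives global interior solutions. Verification proceeds by the chain rule for $e^{-\rho t}V(\eta(t))$ with $V\in\mathcal C^1$ along absolutely continuous trajectories. Pointwise optimality gives $J=V$, and strict convexity gives a.e.\ uniqueness of optimal controls wherever $\eta_i(t)>0$, which holds for all $i$ on interior trajectories.

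\emph{Stage 4: boundary extension.} Repeat Stage 2 from boundary points using the verified bounded control $a^*(\eta(\cdot))$ from a nearby interior point instead of an $\varepsilon$-optimal one. The fluxes are then bounded by $\overline a$, so the constants no longer depend on $\min_i\eta_{0,i}$. For the ratio condition I use perturbations $h$ supported on coordinates where the mass is comparable. This gives uniform semiconcavity, hence a globally Lipschitz $D_\T V$ and the continuous extension of \eqref{eq:gibbs-feedback}. At a boundary state, rows with $\eta_i>0$ are unique minimizers, while rows with $\eta_i=0$ have zero weight in \eqref{eq:hamiltonian-variational} and may be chosen arbitrarily.
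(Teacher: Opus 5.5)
Your Stages 1--3 follow the paper's proof. The ``first attempt'' in Stage 3 is exactly the paper's argument, and it already removes the obstacle you anticipate. With $\eta_k\ge 1/d$, the HJB identity bounds $\eta_k a^*_{ki}$ uniformly, which gives a lower bound on $\partial_iV-\partial_kV$. Semiconvexity is then used only along $e_i-e_k$, i.e.\ moving mass out of the heavy coordinate, on a segment of length $\eta_k/2\ge 1/(2d)$ that never shrinks. A small correction in Stage 1: local integrability of $a^\lambda$ does not follow from finite entropy, which controls fluxes rather than rates near $\partial\D$. It follows because $a^\lambda_{ij}$ is a convex combination of $a^0_{ij}$ and $a^1_{ij}$, so $a^\lambda_{ij}\le a^0_{ij}+a^1_{ij}$.

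The genuine problem is Stage 4, where the diagnosis is off. In Stage 2 the constant degenerates through the ratio $\vartheta=\max_i|h_i|/\eta_{0,i}$, not through the fluxes, whose discounted integral is already uniformly bounded. That ratio is needed only to dominate the term $a_{ij}|z_i|\,\|z\|$ by the integrable flux $x_ia_{ij}$ when $a$ is unbounded.

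If you keep the ratio condition by restricting to $h$ supported on coordinates of comparable mass, you get semiconcavity only in directions that avoid the small coordinates. Those excluded directions are exactly the ones that matter near the boundary. For $d=2$ near a vertex there is no nonzero admissible $h$ of that kind at all, and along the only tangent direction $h=s(e_1-e_2)$ the Stage 2 bound is of order $s^2/\eta_{0,1}^2$. So a globally Lipschitz $D_{\T}V$ does not follow.

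The missing observation is that once $0\le a_{ij}\le\overline a$, no ratio condition is needed. In that case $\eta\mapsto f(\eta,a)$ is $\mathcal C^{1,1}$ on $\D$ with a constant $L_f$ uniform over such $a$, since the term $a_{ij}\eta_i\log q_{ij}(\eta)$ has gradient-Lipschitz constant $O(\overline a)$. Moreover $z(t)=hP(0,t)$ satisfies $\|z(t)\|_1\le\|h\|_1$ because $P$ is stochastic. Together these give $V(\eta+h)+V(\eta-h)-2V(\eta)\le dL_f\|h\|_2^2/\rho$ for every $h\in\T$ with $\eta\pm h\in\D$.

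Note also that this comparison requires a control that is optimal from the base point itself, not one taken from a nearby interior point. So the estimate should be proved at interior base points, where the Gibbs control is verified and bounded and the constant does not depend on the distance to $\partial\D$. It is then transferred to boundary base points via $\eta_\varepsilon=(1-\varepsilon)\eta+\varepsilon d^{-1}\one$, $h_\varepsilon=(1-\varepsilon)h$, and continuity of $V$. Combined with semiconvexity, this yields the global Lipschitz bound on $D_{\T}V$ and the continuous extension of the Gibbs rates.
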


\emph{Sketch of proof.}
The discounted chain rule shows that adding the correction in \eqref{eq:modified-flux-lagrangian} adds exactly $\Phi_K(\eta_0)$ to the cost. Thus $V+\Phi_K$ is the value function for the modified flux problem. Interpolation of initial states and fluxes, together with joint convexity of the modified Lagrangian, gives convexity of $V+\Phi_K$ and hence global $K$-semiconvexity of $V$.

For semiconcavity, fix a compact set $K_0\subset\Dint$ and perturb an initial condition $\eta\in K_0$ to $\eta\pm h$. Apply the same nearly optimal open-loop rate control to the three initial conditions. Once the rates are fixed, the state equation is linear, so the perturbed trajectories remain symmetric around the original trajectory. More importantly, positivity of the associated stochastic evolution family gives the relative perturbation estimate \eqref{eq:relative-perturbation}: the perturbation of each coordinate remains controlled by the corresponding mass of the unperturbed trajectory. This prevents the logarithmic entropy terms from producing singular second-order errors. Potentially large transition rates consequently enter the estimates only through the macroscopic fluxes $\eta_i a_{ij}$, whose discounted integral is controlled by entropy coercivity. The resulting quadratic upper bound on second differences gives local semiconcavity and hence interior $\mathcal{C}^{1,1}$ regularity. After the uniform rate bounds are available, the detailed proof upgrades this estimate to global semiconcavity on $\D$; together with global semiconvexity, this yields the asserted intrinsic $\mathcal{C}^{1,1}$ regularity on the closed simplex.

The HJB equation is therefore classical in the interior. Its minimization separates over directed edges, and each scalar entropy-regularized edge objective is strictly convex. Its first-order condition gives \eqref{eq:gibbs-feedback}. The HJB equation bounds the total optimal macroscopic flux uniformly. Choosing a coordinate with mass at least $1/d$, the semiconvex inequality for $V$ supplies the missing opposite bound on the intrinsic gradient differences; substitution into \eqref{eq:gibbs-feedback} yields \eqref{eq:global-rate-bounds}. The global rate bound prevents the closed-loop solution from reaching the boundary in finite time, and local $\mathcal{C}^{1,1}$ regularity gives uniqueness. Verification follows from the Hamiltonian gap exactly as in the detailed proof. The full proof is given in \Cref{app:regularity}.

{\color{black}
\begin{theorem}[Uniform small-$\beta$ limit of the Gibbs feedback]
\label{thm:uniform-small-beta-feedback}
Keep $\rho$, $C$, and $q$ fixed. Suppose that there exists
$\beta_0>0$ such that, for every $\beta\in(0,\beta_0]$, the
corresponding instance of \Cref{ass:regularity} holds with
semiconvexity constant $K_\beta$, and assume that
\begin{equation}\label{eq:small-beta-semiconvexity-condition}
L:=\sup_{0<\beta\le\beta_0}\beta K_\beta<\infty.
\end{equation}
Let $V_\beta$ be the corresponding value function, and let
$a^{*,\beta}$ be the continuous Gibbs minimizing selection on $\D$
defined by \eqref{eq:gibbs-feedback} with $V=V_\beta$. Then
\begin{equation}\label{eq:uniform-feedback-convergence}
\max_{i\ne j}\sup_{\eta\in\D}
\left|
a_{ij}^{*,\beta}(\eta)-q_{ij}(\eta)
\right|
\longrightarrow0
\qquad\text{as }\beta\downarrow0.
\end{equation}
{\color{black}Under \eqref{eq:small-beta-semiconvexity-condition}, the
convergence has order $O(\sqrt\beta)$.}
In particular, if one constant $K$ can be used in
\Cref{ass:regularity} for every sufficiently small $\beta$, then the
convergence in \eqref{eq:uniform-feedback-convergence} is of order
$O(\beta)$.
\end{theorem}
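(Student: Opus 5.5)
The plan is to prove a uniform bound
\[
\max_{i\ne j}\sup_{\eta\in\Dint}|x_{ij}(\eta)|=O(\sqrt\beta),
\qquad
x_{ij}:=\beta\bigl[(D_{\T}V_\beta)_j-(D_{\T}V_\beta)_i\bigr],
\]
with the improvement to $O(\beta)$ when $K_\beta\equiv K$. Once this holds, $|a^{*,\beta}_{ij}-q_{ij}|=q_{ij}|e^{-x_{ij}}-1|\le q_{\max}\,e^{|x_{ij}|}|x_{ij}|$ gives \eqref{eq:uniform-feedback-convergence} on $\Dint$. By \Cref{thm:gibbs} the Gibbs selection extends continuously to $\D$, so the bound passes to the closed simplex. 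Only two inputs are used: the HJB equation, which holds classically on $\Dint$ by \Cref{thm:gibbs}, and global $K_\beta$-semiconvexity of $V_\beta$. First, the reference control has zero KL cost, so $C_{\min}/\rho\le V_\beta\le \sup C/\rho$. Hence $M:=\osc V_\beta\le \osc C/\rho$ and $|\rho V_\beta-C|\le 2\|C\|_\infty$, uniformly in $\beta$.

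\emph{Step 1: one-sided bound from semiconvexity on short segments.} Fix $\eta\in\Dint$, an edge $i\ne j$, and $0<s\le\eta_i$, so that $\eta+s(e_j-e_i)\in\D$. Since $V_\beta+\Phi_{K_\beta}$ is convex and $\|e_j-e_i\|^2=2$,
\[
V_\beta(\eta+s(e_j-e_i))\ge V_\beta(\eta)+s\bigl[(D_{\T}V_\beta)_j-(D_{\T}V_\beta)_i\bigr]-K_\beta s^2 .
\]
Multiplying by $\beta/s$ gives $x_{ij}\le \beta M/s+\beta K_\beta s\le \beta M/s+Ls$. Taking $s=\min(\eta_i,\sqrt\beta)$, this yields $x_{ij}\le (M+L)\sqrt\beta$ whenever $\eta_i\ge\sqrt\beta$. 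When $K_\beta\equiv K$, take $s=\eta_i$ instead; the bound reads $\eta_i x_{ij}\le \beta M+\beta K\eta_i^2=O(\beta)$. In particular, for an index $k$ with $\eta_k\ge 1/d$ (which always exists), $x_{kj}\le O(\sqrt\beta)$ for every $j$, respectively $O(\beta)$.

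\emph{Step 2: the opposite bound from the HJB equation.} At interior points the HJB equation reads
\[
\sum_{i\ne j}\eta_i q_{ij}\bigl(e^{-x_{ij}}-1\bigr)=\beta\bigl(C-\rho V_\beta\bigr)=O(\beta).
\]
Split the sum into positive terms ($x_{ij}<0$) and negative terms ($x_{ij}>0$). A negative term has absolute value at most $\eta_iq_{\max}\min(1,x_{ij})$.
\begin{itemize}
\item If $\eta_i\ge\sqrt\beta$, Step 1 bounds it by $q_{\max}O(\sqrt\beta)$.
\item If $\eta_i<\sqrt\beta$, it is at most $q_{\max}\sqrt\beta$.
\item For constant $K$, it is at most $q_{\max}\eta_i x_{ij}^+=O(\beta)$.
\end{itemize}
Hence the sum of all positive terms is $O(\sqrt\beta)$, respectively $O(\beta)$. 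In particular, for $\eta_k\ge 1/d$ this gives $\frac{q_{\min}}{d}(e^{-x_{kj}}-1)\le O(\sqrt\beta)$, hence $x_{kj}\ge -O(\sqrt\beta)$. Combined with Step 1, $|x_{kj}|=O(\sqrt\beta)$ for every $j$.

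\emph{Step 3: all edges.} Because intrinsic gradient differences telescope, $x_{ij}=x_{kj}-x_{ki}$. Applying Step 2 with this fixed heavy index $k$ therefore gives $|x_{ij}|=O(\sqrt\beta)$ uniformly in $\eta\in\Dint$ and in all edges, with $O(\beta)$ for constant $K$. The constants depend only on $\|C\|_\infty$, $\rho$, $q_{\min}$, $q_{\max}$, $d$ and $L$.

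\emph{Main obstacle.} The delicate point is Step 2 near the boundary. There Step 1 gives no useful control on $x_{ij}$ for rows with tiny mass $\eta_i$. The device that should resolve it is that such rows enter the HJB identity only with weight $\eta_i$, and the negative part of $e^{-x}-1$ is bounded by $1$. This is why the choice of segment length $\min(\eta_i,\sqrt\beta)$ is made, and it is the source of the $\sqrt\beta$ rate in the general case versus the $\beta$ rate when $K$ is fixed.
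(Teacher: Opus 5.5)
Your proof is correct. It uses the same three ingredients as the paper: the $\beta$-uniform oscillation bound from the reference control, $K_\beta$-semiconvexity on a segment of length $\sqrt\beta$, and the balance of positive and negative terms in the HJB identity. You organize them differently, however.

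The paper never moves along an edge direction. With $g=\beta D_{\T}V_\beta(\eta)$, it takes the segment from $\eta$ toward the vertex $e_j$ of the largest component. That segment is feasible for every $t\in(0,1]$ wherever $\eta$ lies, and its linear term is $g_j-\bar g$ with the population average $\bar g=\sum_i\eta_ig_i$. This average carries exactly the weights $\eta_i$ that appear in the HJB. So the negative terms are bounded in one line by $(d-1)q_{\max}(\max_ig_i-\bar g)$, using $(g_j-g_i)^+\le\max_kg_k-g_i$. The positive terms on edges into a minimizing index are at least $q_{\min}(\bar g-\min_ig_i)$. There is no light/heavy case split and no distinguished heavy index, and an explicit constant $A_\beta$ comes out directly; the paper reuses it later for the numerical tail bound.

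Your edgewise segments $\eta+s(e_j-e_i)$ are constrained by $s\le\eta_i$. That is why you must treat rows with $\eta_i<\sqrt\beta$ separately, bounding their negative part trivially by $\eta_iq_{\max}$. It is also why you get two-sided control only at a heavy index $k$, which you then propagate by $x_{ij}=x_{kj}-x_{ki}$. This costs a case distinction and somewhat worse dependence on $d$: a factor $d$ from $\eta_k\ge 1/d$, and a doubling from the telescoping.

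In exchange, your argument closely mirrors the heavy-index argument in Step~2 of the proof of \Cref{thm:gibbs}. Your fixed-$K$ variant, taking $s=\eta_i$ to get $\eta_ix_{ij}\le\beta(M+K)$, is the edgewise counterpart of the paper's choice $t=1$.
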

{\color{black}
\emph{Sketch of proof.}
The value functions have uniformly bounded oscillation. Apply
semiconvexity on a segment of length proportional to $\sqrt\beta$
toward a vertex maximizing the scaled gradient. This makes the
largest component of $\beta D_{\T}V_\beta$ differ from its
population-weighted average by $O(\sqrt\beta)$, even when
$K_\beta$ grows like $1/\beta$. The HJB equation then controls
the average above the smallest component by the same order.
All scaled gradient differences consequently vanish uniformly,
which proves the feedback limit. If $K_\beta$ is uniformly bounded,
a full segment instead gives order $O(\beta)$. The proof appears
in \Cref{app:regularity}.
}

\begin{corollary}[Small-$\beta$ convergence of the closed-loop dynamics]
\label{cor:small-beta-trajectories}
Under the hypotheses of \Cref{thm:uniform-small-beta-feedback}, let
$\eta^\beta(\cdot;\eta_0)$ and $\eta^q(\cdot;\eta_0)$ denote the
trajectories with common initial state $\eta_0\in\D$ generated by
$a^{*,\beta}$ and $q$, respectively. Then
\[
\sup_{\eta_0\in\D}\sup_{t\ge0}\max_{i\ne j}
\left|a_{ij}^{*,\beta}(\eta^\beta(t;\eta_0))
-q_{ij}(\eta^\beta(t;\eta_0))\right|\longrightarrow0.
\]
For every $T<\infty$,
\begin{equation}\label{eq:small-beta-finite-horizon-trajectories}
\sup_{\eta_0\in\D}\sup_{0\le t\le T}
\|\eta^\beta(t;\eta_0)-\eta^q(t;\eta_0)\|_1
\longrightarrow0.
\end{equation}
Moreover, the following discounted infinite-horizon limits hold:
\begin{equation}\label{eq:small-beta-discounted-trajectories}
\sup_{\eta_0\in\D}\int_0^\infty e^{-\rho t}
\|\eta^\beta(t;\eta_0)-\eta^q(t;\eta_0)\|_1\dd t
\longrightarrow0
\end{equation}
and
\begin{equation}\label{eq:small-beta-discounted-rates}
\sup_{\eta_0\in\D}\int_0^\infty e^{-\rho t}
\sum_{i\ne j}\left|
a_{ij}^{*,\beta}(\eta^\beta(t;\eta_0))
-q_{ij}(\eta^q(t;\eta_0))\right|\dd t
\longrightarrow0.
\end{equation}
All limits are as $\beta\downarrow0$.
\end{corollary}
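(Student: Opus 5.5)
The plan is to derive everything from the uniform feedback convergence of \Cref{thm:uniform-small-beta-feedback}, together with Lipschitz continuity of the reference vector field and a Gronwall argument. Set
\[
\varepsilon_\beta:=\max_{i\ne j}\sup_{\eta\in\D}\bigl|a^{*,\beta}_{ij}(\eta)-q_{ij}(\eta)\bigr|,
\]
so that $\varepsilon_\beta\to0$ as $\beta\downarrow0$.

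\textbf{Step 1: rate error along trajectories.} The first assertion is immediate. The trajectory $\eta^\beta(t;\eta_0)$ stays in $\D$: for $\eta_0\in\Dint$ it stays in $\Dint$ by \Cref{thm:gibbs}, and for boundary initial states the continuous selection $a^{*,\beta}$ is still a bounded generator field. Hence the supremum over $\eta_0$ and $t$ is bounded by $\varepsilon_\beta$.

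For boundary initial data, I first need existence and uniqueness of the closed-loop solution. Since $q\in\mathcal C^{1,1}$ and $D_{\T}V_\beta$ is globally Lipschitz on $\D$ by \Cref{thm:gibbs}, the extended feedback is Lipschitz on $\D$. The field $\eta\mapsto\eta a^{*,\beta}(\eta)$ is therefore Lipschitz, and it preserves $\D$ because it is a generator field. This gives a unique global solution.

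\textbf{Step 2: finite-horizon estimate.} Write $F_\beta(\eta):=\eta a^{*,\beta}(\eta)$ and $F_q(\eta):=\eta q(\eta)$. Then $\|F_\beta(\eta)-F_q(\eta)\|_1\le 2\sum_{i\ne j}\eta_i|a^{*,\beta}_{ij}-q_{ij}|\le 2(d-1)\varepsilon_\beta$.

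The field $F_q$ is Lipschitz on $\D$ with some constant $L_q$, depending only on $q_{\max}$ and the Lipschitz constants of the $q_{ij}$. With $e(t):=\|\eta^\beta(t)-\eta^q(t)\|_1$, this gives
\[
e(t)\le\int_0^t\bigl(L_q e(s)+2(d-1)\varepsilon_\beta\bigr)\dd s .
\]
Gronwall then yields $e(t)\le 2(d-1)\varepsilon_\beta t e^{L_qt}$. This bound is uniform in $\eta_0$, which proves \eqref{eq:small-beta-finite-horizon-trajectories}.

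\textbf{Step 3: discounted limits.} Since $e(t)\le2$ always, split the integral at a horizon $T$:
\[
\int_0^\infty e^{-\rho t}e(t)\dd t\le \sup_{t\le T}e(t)\,\rho^{-1}+2\rho^{-1}e^{-\rho T}.
\]
Choose $T$ so that the tail term is below $\delta$, then let $\beta\downarrow0$. This proves \eqref{eq:small-beta-discounted-trajectories}.

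For \eqref{eq:small-beta-discounted-rates}, use the triangle inequality
\[
|a^{*,\beta}_{ij}(\eta^\beta)-q_{ij}(\eta^q)|\le\varepsilon_\beta+\mathrm{Lip}(q_{ij})\,\|\eta^\beta-\eta^q\|_1 .
\]
The first term integrates to $d(d-1)\varepsilon_\beta/\rho$. The second term is handled by \eqref{eq:small-beta-discounted-trajectories}; the $\ell^1$ and Euclidean norms are equivalent, so the choice of norm for the Lipschitz constant does not matter.

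\textbf{Main obstacle.} There is no real obstacle here; the corollary is a standard perturbation argument. The only delicate point is well-posedness and uniformity at boundary initial states. Uniformity over all $\eta_0\in\D$ requires a global Lipschitz bound on $F_q$ on the closed simplex, which is available. It does not need any $\beta$-uniform Lipschitz bound on $F_\beta$, because the Gronwall estimate uses the Lipschitz constant of $F_q$ only and treats $F_\beta-F_q$ as a uniformly small forcing.
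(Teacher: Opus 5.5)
Your proposal is correct and follows essentially the same route as the paper's proof. Both use the uniform feedback error $\varepsilon_\beta$, global well-posedness on the closed simplex via the Lipschitz Gibbs extension from \Cref{thm:gibbs}, and the forcing bound $2(d-1)\varepsilon_\beta$ with Gronwall relying only on the Lipschitz constant of the reference field; both then split the discounted integral at a horizon $T$ and handle the rates by the triangle inequality. The one minor difference is that the paper justifies invariance of $\D$ by the inward-pointing condition $b_i^\beta(\eta)\ge0$ when $\eta_i=0$, whereas you appeal to the generator structure; either argument suffices.
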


\begin{remark}[Scope of the uniform limit]
\label{rem:small-beta-feedback-scope}
The convergence in \eqref{eq:uniform-feedback-convergence} is global
in the state variable: it holds on the entire closed simplex.
At a boundary state, however, the assertion concerns the canonical
continuous Gibbs selection. Arbitrary minimizing rows with
$\eta_i=0$ remain nonunique and need not converge. For arbitrary
nearly optimal open-loop controls, entropy estimates control
mass-weighted fluxes and do not by themselves imply uniform
convergence of every rate.

The boundedness condition \eqref{eq:small-beta-semiconvexity-condition}
is additional to \Cref{ass:regularity} at each fixed $\beta$.
It holds for constant reference rates and convex $C$, with $K_\beta=0$,
and also for the state-dependent family in
\Cref{ex:temperature-state-dependent}.

{\color{black}
The stronger condition $\beta K_\beta\to0$, in conjunction with
\Cref{ass:regularity} and fixed $C$ and $q$, would force the
reference rates to be state-independent.
Indeed, as $\beta\downarrow0$,
\[
\beta\widehat L_{K_\beta}(\eta,w)
\longrightarrow
F(\eta,w):=\sum_{i\ne j}\ell(w_{ij},r_{ij}(\eta)).
\]
Hence $F$ is jointly convex.
Setting $w=0$ shows that $\sum_{i\ne j}r_{ij}$ is convex.
For a fixed edge $(i,j)$, set all other off-diagonal
fluxes to zero and minimize $F-sw_{ij}$ over
$w_{ij}\ge0$. The resulting function
\[
\sum_{k\ne l}r_{kl}(\eta)-e^s r_{ij}(\eta)
\]
is convex for every $s\in\mathbb R$.
Multiplying by $e^{-s}$ and letting $s\to\infty$
shows that $r_{ij}$ is concave.
Thus the sum of these concave functions is affine,
and each summand $r_{ij}$ must be affine.
Since $r_{ij}$ vanishes on the face $\{\eta_i=0\}$,
it has the form $r_{ij}(\eta)=\bar q_{ij}\eta_i$.
Continuity gives $q_{ij}\equiv\bar q_{ij}$ on $\D$.
This rigidity argument does not apply to
\eqref{eq:small-beta-semiconvexity-condition}: the scaled quadratic
correction need not disappear. The example just cited shows that
the relaxed hypothesis genuinely admits state-dependent rates.
}

The trajectory conclusions are uniform on finite time intervals
and on the discounted infinite horizon. Uniform-in-time convergence
$\sup_{t\ge0}\|\eta^\beta(t;\eta_0)-\eta^q(t;\eta_0)\|_1\to0$
is not asserted without an additional contraction or
incremental-stability assumption.
\end{remark}
}

\subsection{Main result 3: Free-energy dissipation and global exponential stability}
The third result concerns the long-time behavior of the nonlinear closed-loop dynamics generated by the Gibbs feedback. In addition to \Cref{ass:regularity}, we impose the symmetry condition \Cref{ass:symmetry}. The reference rates may depend on $\eta$. Let $K$ be the semiconvexity constant from \Cref{ass:regularity} and assume $2\beta K<1$.

Define
\textcolor{black}{the {\it free-energy (Lyapunov) functional}}
\begin{equation}\label{eq:free-energy}
\mathcal F(\eta)
:=
\sum_{i=1}^d\eta_i\log\eta_i+2\beta V(\eta),
\qquad \eta\in\D,
\end{equation}
with $0\log0=0$, and put $\mu:=1-2\beta K>0$. For $\eta\in\Dint$, \textcolor{black}{define the {\it free-energy potentials}}
\[
\psi_i(\eta):=\log\eta_i+2\beta\partial_iV(\eta).
\]
Only differences of the $\psi_i$ are intrinsic; $\psi_j-\psi_i$ is the marginal change in free energy associated with transferring mass from state $i$ to state $j$. The orthogonal projection onto $\T$ is
\begin{equation}\label{eq:tangent-projection}
\Pi_{\T}z
:=
z-\left(\frac1d\sum_{i=1}^dz_i\right)\one.
\end{equation}

Using \eqref{eq:gibbs-feedback}, the optimal flux can be written as
\begin{equation}\label{eq:gibbs-flux}
\eta_i a^*_{ij}(\eta)
=
q_{ij}(\eta)\sqrt{\eta_i\eta_j}\,
e^{-(\psi_j(\eta)-\psi_i(\eta))/2}.
\end{equation}
Define
\textcolor{black}{the {\it free-energy dissipation rate}}
\begin{equation}\label{eq:dissipation}
\mathcal D(\eta)
:=
\frac12\sum_{i\ne j}
q_{ij}(\eta)\sqrt{\eta_i\eta_j}\,
\bigl(\psi_j(\eta)-\psi_i(\eta)\bigr)
\left(
e^{(\psi_j(\eta)-\psi_i(\eta))/2}
-
e^{-(\psi_j(\eta)-\psi_i(\eta))/2}
\right).
\end{equation}
Every term is nonnegative because $x(e^{x/2}-e^{-x/2})\ge0$.

\begin{theorem}[Global exponential convergence]\label{thm:convergence}
Under \Cref{ass:regularity,ass:symmetry}, suppose $2\beta K<1$. Then $\mathcal F$ is $\mu$-strongly convex on the affine simplex, has a unique minimizer $\eta^\infty\in\Dint$, and this point is the unique equilibrium of the closed-loop dynamics in $\Dint$. For every $\eta_0\in\Dint$, along the closed-loop solution of \Cref{thm:gibbs} with $\eta(0)=\eta_0$,
\begin{equation}\label{eq:dissipation-identity}
\frac{\dd}{\dd t}\mathcal F(\eta(t))
=-\mathcal D(\eta(t)),
\qquad
\mathcal D(\eta(t))\ge0.
\end{equation}
Moreover, there exists $\lambda>0$, depending only on the model data, such that the entropy-production inequality
\begin{equation}\label{eq:entropy-production}
\mathcal D(\eta)
\ge
\lambda\bigl(\mathcal F(\eta)-\mathcal F(\eta^\infty)\bigr),
\qquad
\eta\in\Dint,
\end{equation}
holds uniformly on the interior of the simplex. Consequently,
\begin{equation}\label{eq:free-energy-decay}
\mathcal F(\eta(t))-\mathcal F(\eta^\infty)
\le
e^{-\lambda t}
\bigl(\mathcal F(\eta(0))-\mathcal F(\eta^\infty)\bigr),
\end{equation}
and
\begin{equation}\label{eq:state-decay}
\|\eta(t)-\eta^\infty\|_2
\le
\sqrt{\frac{2}{\mu}\bigl(\mathcal F(\eta(0))-\mathcal F(\eta^\infty)\bigr)}\,
e^{-\lambda t/2}.
\end{equation}

\end{theorem}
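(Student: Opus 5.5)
We will take \Cref{thm:gibbs} as given. It supplies global $K$-semiconvexity, intrinsic $\mathcal C^{1,1}$ regularity of $V$, the Gibbs feedback \eqref{eq:gibbs-feedback}, the rate bounds \eqref{eq:global-rate-bounds}, and global interior well-posedness of the closed loop.

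\emph{Step 1: convexity, minimizer, equilibrium.} The negative entropy $\sum_i\eta_i\log\eta_i$ has Hessian $\mathrm{diag}(1/\eta_i)\ge I$ on $\T$, since $\eta_i\le1$. It is therefore $1$-strongly convex on the affine simplex. The function $2\beta(V+\Phi_K)$ is convex, so $2\beta V$ is $(-2\beta K)$-semiconvex, and $\mathcal F$ is $\mu$-strongly convex. By continuity and compactness, $\mathcal F$ has a unique minimizer $\eta^\infty$. It cannot lie on $\partial\D$: the entropy has infinite inward directional derivative at the boundary, while $V$ is Lipschitz. Hence $\eta^\infty\in\Dint$, and there $\Pi_{\T}\psi(\eta^\infty)=0$, so all differences $\psi_j-\psi_i$ vanish.

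By \eqref{eq:gibbs-flux} and symmetry, the net flux on the edge $\{i,j\}$ is $q_{ij}\sqrt{\eta_i\eta_j}\,(e^{-x_{ij}/2}-e^{x_{ij}/2})$ with $x_{ij}=\psi_j-\psi_i$. Vanishing differences therefore give an equilibrium. Conversely, at an interior equilibrium we have $\sum_{i\ne j}(\text{net flux})\,x_{ij}=0$ with every term of one sign, so each $x_{ij}=0$. That point is then a critical point of the strictly convex $\mathcal F$, which forces it to equal $\eta^\infty$.

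\emph{Step 2: dissipation identity.} Along an interior trajectory, $V$ is $\mathcal C^{1,1}$, so $t\mapsto\mathcal F(\eta(t))$ is absolutely continuous and $\frac{d}{dt}\mathcal F=\psi\cdot\dot\eta$, where $\sum_i\dot\eta_i=0$ kills the constant term. Writing $\dot\eta$ edgewise gives
\[
\psi\cdot\dot\eta=\sum_{i\ne j}\eta_ia^*_{ij}(\psi_j-\psi_i).
\]
Pairing the orientations $(i,j)$ and $(j,i)$ through \Cref{ass:symmetry} and using \eqref{eq:gibbs-flux} yields exactly $-\mathcal D$. Combining this identity with \eqref{eq:entropy-production} and Gronwall gives \eqref{eq:free-energy-decay}. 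Strong convexity together with criticality of $\eta^\infty$ gives $\mathcal F(\eta)-\mathcal F(\eta^\infty)\ge\frac\mu2\|\eta-\eta^\infty\|_2^2$, which yields \eqref{eq:state-decay}.

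\emph{Step 3: entropy production (main obstacle).} We split $\Dint$ into $\{\min_i\eta_i\ge\delta\}$ and its complement.

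On the interior region, note that $x(e^{x/2}-e^{-x/2})\ge x^2$ and $q_{ij}\sqrt{\eta_i\eta_j}\ge q_{\min}\delta$. This gives
\[
\mathcal D\ge c\delta\sum_{i<j}(\psi_j-\psi_i)^2=c\delta d\,\|\Pi_{\T}\psi\|_2^2,
\]
using the complete-graph identity. The right side involves the squared intrinsic gradient of $\mathcal F$, which we do not know is a gradient of a $C^1$ function up to the boundary but which exists on $\Dint$ by $\mathcal C^{1,1}$. Strong convexity gives the Polyak--\L ojasiewicz bound
\[
\mathcal F(\eta)-\mathcal F(\eta^\infty)\le\frac1{2\mu}\|\Pi_{\T}\psi\|_2^2.
\]
We can verify this by applying the convexity inequality along the segment to $\eta^\infty$.

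Near the boundary, the free-energy gap is bounded by $\osc\mathcal F<\infty$, so it suffices to show $\mathcal D\ge c_0>0$. Choose $k$ with $\eta_k\ge1/d$ and $m$ with $\eta_m<\delta$. The intrinsic gradient of $V$ is globally bounded, so $\psi_k-\psi_m\ge\log(1/(d\eta_m))-4\beta\|D_{\T}V\|_\infty$. For small $\delta$ this is large and positive. The edge term is then at least
\[
q_{\min}\sqrt{\eta_m/d}\;x\,e^{x/2}/2\quad\text{with }e^{x/2}\gtrsim(d\eta_m)^{-1/2},
\]
which gives a lower bound of order $x\gtrsim\log(1/\delta)$, bounded away from zero. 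We would fix $\delta$ small enough for this, and then take $\lambda$ to be the minimum of the two regional constants. The delicate points are making these constants depend only on the data, through $\|D_{\T}V\|_\infty$ from \Cref{thm:gibbs}, and confirming the Polyak--\L ojasiewicz step on the open set without boundary regularity of $\mathcal F$.
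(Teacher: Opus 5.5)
Your proposal is correct and follows the paper's proof essentially step for step: strong convexity of $\mathcal F$, the projected PL inequality, and the complete-graph identity on $\{\min_i\eta_i\ge\delta\}$; then, near the boundary, a single dissipating edge between a minimal coordinate and one of mass at least $1/d$, together with a bounded free-energy gap. The one difference is how you bound $\partial_kV-\partial_mV$ near the boundary, and how you place the minimizer in $\Dint$. You use the global gradient bound and Lipschitz continuity from \Cref{thm:gibbs}. The paper instead uses only convexity of $V+\Phi_K$, via the one-sided estimate $\partial_iV-\partial_jV\le d\,\osc_{\D}V+K$ at the feasible point $\eta+\eta_j(e_i-e_j)$. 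That choice keeps $\lambda$ explicit in $\osc_{\D}V$, which is reused in \Cref{thm:uniform-small-beta-stability}, and keeps the argument valid for merely semiconvex prescribed potentials. The PL step you flagged as delicate is harmless: the segment from $\eta$ to $\eta^\infty$ lies in $\Dint$, where $\mathcal F$ is differentiable.
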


\emph{Sketch of proof.}
The entropy $\sum_i\eta_i\log\eta_i$ is $1$-strongly convex on the affine simplex, while $V$ is $K$-semiconvex by \Cref{thm:gibbs}. Hence $\mathcal F$ is $\mu$-strongly convex. The projected Polyak--\L{}ojasiewicz (PL) and quadratic gap estimates in \Cref{lem:pl} therefore hold with the constant $\mu$.

The complete-graph identity \eqref{eq:complete-graph} links the edgewise chemical-potential differences to the projected gradient. Symmetry $q_{ij}(\eta)=q_{ji}(\eta)$ pairs the two orientations of each edge in \eqref{eq:gibbs-flux}, giving the dissipation identity \eqref{eq:dissipation-identity}.

It remains to prove the uniform estimate \eqref{eq:entropy-production}. On an interior region where all coordinates are bounded below, the edge weights in $\mathcal D$ are uniformly positive. Together with \eqref{eq:complete-graph} and the projected PL estimate, this bounds the free-energy gap by the dissipation. Near the boundary, if one coordinate is small and another has mass at least $1/d$, the semiconvex inequality bounds the value-function contribution to the corresponding chemical-potential difference by a uniform constant depending on $K$, while the logarithmic term diverges. A single edge then contributes a fixed positive amount to $\mathcal D$. Since the free-energy gap is bounded above on the compact simplex, \eqref{eq:entropy-production} also holds uniformly in the boundary region.

Combining the two regions proves \eqref{eq:entropy-production}. Using this estimate in \eqref{eq:dissipation-identity} and applying Gronwall's inequality gives \eqref{eq:free-energy-decay}, while the strong-convexity gap estimate then yields \eqref{eq:state-decay}. The full proof is given in \Cref{app:convergence}.

\begin{theorem}[Uniform exponential convergence for small inverse temperature]
\label{thm:uniform-small-beta-stability}
Keep $\rho$, $C$, and $q$ fixed, and assume
\Cref{ass:symmetry}. Suppose that there exists $\beta_0>0$ such that,
for every $\beta\in(0,\beta_0]$, \Cref{ass:regularity} holds with
semiconvexity constant $K_\beta$. Assume that, for some fixed
$\varepsilon\in(0,1]$,
\begin{equation}\label{eq:uniform-curvature-margin}
2\beta K_\beta\le1-\varepsilon,
\qquad 0<\beta\le\beta_0.
\end{equation}
(For example, positive constant symmetric reference rates and
a convex $\mathcal C^{1,1}$ state cost satisfy this condition
with $K_\beta=0$ and $\varepsilon=1$.
The fixed state-dependent model in
\Cref{ex:temperature-state-dependent} satisfies it with
$K_\beta=\sigma/(2\beta)$ and $\varepsilon=1-\sigma$.) 
Let $\mathcal F_\beta$ and $\mathcal D_\beta$ denote the free energy
and dissipation in \eqref{eq:free-energy} and \eqref{eq:dissipation}, respectively,
with $V$ replaced by $V_\beta$.

Then there exist $\beta_*\in(0,\beta_0]$ and constants
$\lambda_*>0$ and $B_*<\infty$, independent of $\beta$ and the
initial state, such that the following holds for every
$\beta\in(0,\beta_*]$.

The closed-loop dynamics has a unique equilibrium
$\eta_\beta^\infty\in\Dint$, and
\begin{equation}\label{eq:uniform-small-beta-entropy-production}
\mathcal D_\beta(\eta)
\ge
\lambda_*
\bigl(
\mathcal F_\beta(\eta)
-\mathcal F_\beta(\eta_\beta^\infty)
\bigr),
\qquad \eta\in\Dint.
\end{equation}
Consequently, for every $\eta_0\in\Dint$, the corresponding
closed-loop trajectory satisfies, for all $t\ge0$,
\begin{equation}\label{eq:uniform-small-beta-energy-decay}
\mathcal F_\beta(\eta^\beta(t;\eta_0))
-\mathcal F_\beta(\eta_\beta^\infty)
\le
e^{-\lambda_*t}
\bigl(
\mathcal F_\beta(\eta_0)
-\mathcal F_\beta(\eta_\beta^\infty)
\bigr)
\end{equation}
and
\begin{equation}\label{eq:uniform-small-beta-state-decay}
\|\eta^\beta(t;\eta_0)-\eta_\beta^\infty\|_2
\le B_*e^{-\lambda_*t/2}.
\end{equation}
One may take
\[
\lambda_*
:=
\min\left\{
\frac{q_{\min}(1-e^{-1})e^{-1}}
{2d(\log d+1)},
\;
2\varepsilon q_{\min}e^{-3}
\right\},
\qquad
B_*:=\sqrt{\frac{2(\log d+1)}{\varepsilon}}.
\]
\end{theorem}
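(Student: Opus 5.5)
The plan is to rerun the proof of \Cref{thm:convergence} for each $\beta\in(0,\beta_*]$ while tracking every constant, and to check that, under \eqref{eq:uniform-curvature-margin}, each constant depends only on $d$, $q_{\min}$ and $\varepsilon$. Existence, uniqueness and interiority of $\eta^\infty_\beta$ come directly from \Cref{thm:convergence} applied at fixed $\beta$, because $2\beta K_\beta<1$. The same theorem gives the dissipation identity \eqref{eq:dissipation-identity} for $\mathcal F_\beta$. The strong-convexity modulus is $\mu_\beta=1-2\beta K_\beta\ge\varepsilon$. Consequently, the projected PL inequality of \Cref{lem:pl} holds with constant $\varepsilon$, uniformly in $\beta$.

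\emph{Uniform gap bound.} Since $V_\beta$ has bounded oscillation, say $\osc V_\beta\le\osc C/\rho$, we have
\[
\mathcal F_\beta(\eta)-\mathcal F_\beta(\eta^\infty_\beta)\le \log d+2\beta\,\osc V_\beta .
\]
Choose $\beta_*$ with $2\beta_*\osc C/\rho\le1$. Then the gap is at most $\log d+1$ for every $\beta\le\beta_*$. Combined with the quadratic gap estimate, this gives \eqref{eq:uniform-small-beta-state-decay} with $B_*=\sqrt{2(\log d+1)/\varepsilon}$, once \eqref{eq:uniform-small-beta-energy-decay} is in hand.

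\emph{Entropy production.} This splits into two regions.
\begin{itemize}
\item \emph{Interior region} $\{\min_i\eta_i\ge\delta\}$. We use $x(e^{x/2}-e^{-x/2})\ge x^2 e^{-|x|/2}$-type bounds; only a lower bound on the weights is needed. The complete-graph identity \eqref{eq:complete-graph} bounds $\sum_{i<j}(\psi_j-\psi_i)^2$ below by $d\|\Pi_{\T}\psi\|^2$. The weights satisfy $q_{ij}\sqrt{\eta_i\eta_j}\ge q_{\min}\delta$. The PL inequality then gives $\mathcal D_\beta\ge c\,\varepsilon q_{\min}\cdot(\text{gap})$. Here the exponential factors must be controlled, which requires a bound on $|\psi_j-\psi_i|$ uniform in $\beta$.
\item \emph{Boundary region}. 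Pick $i$ with $\eta_i<\delta$ and $j$ with $\eta_j\ge1/d$. We need
\[
\psi_j-\psi_i\ge\log(\eta_j/\eta_i)-2\beta\bigl[(D_\T V_\beta)_i-(D_\T V_\beta)_j\bigr].
\]
The scaled gradient difference $2\beta[\cdots]$ must be bounded uniformly in $\beta$. I would take this from the small-$\beta$ feedback analysis behind \Cref{thm:uniform-small-beta-feedback}: semiconvexity on a segment of length $\sim\sqrt\beta$, together with the HJB equation, shows the scaled gradient differences are $O(\sqrt\beta)$ under $\beta K_\beta\le1/2$. Shrinking $\beta_*$ if necessary, these differences are at most $1$ in absolute value. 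Hence the marginal value contribution to $\psi_j-\psi_i$ is at most $1$. One edge then contributes at least $q_{\min}\sqrt{\eta_i\eta_j}\,x\,e^{x/2}/2$ with $x\ge\log(1/(d\delta))-1$, and since $\sqrt{\eta_i}e^{x/2}\gtrsim\eta_j/\sqrt{\dots}$, this yields a positive constant independent of $\beta$. Dividing by the gap bound $\log d+1$ produces the first entry of $\lambda_*$, of the form $q_{\min}(1-e^{-1})e^{-1}/(2d(\log d+1))$ after taking $\delta$ so that $x\ge1$ (this is what produces the factors $e^{-1}$ and $1-e^{-1}$).
\end{itemize}
In the interior, the same bound of $1$ on the scaled gradient differences together with $\log(\eta_j/\eta_i)\le\log(1/\delta)$ controls the exponential weights; choosing $\delta$ of order $e^{-1}/d$ should produce the factor $e^{-3}$ in $2\varepsilon q_{\min}e^{-3}$.

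Taking the minimum of the two regional constants yields \eqref{eq:uniform-small-beta-entropy-production}, and Gronwall applied to \eqref{eq:dissipation-identity} gives \eqref{eq:uniform-small-beta-energy-decay}. The main obstacle is the boundary-region step: in \Cref{thm:convergence} the value contribution was bounded by a constant depending on $K$, and here $K_\beta$ may blow up like $1/\beta$. The proof therefore hinges on showing that the scaled quantity $\beta\,D_\T V_\beta$ differences are uniformly small, which requires extracting the quantitative $O(\sqrt\beta)$ estimate from the proof of \Cref{thm:uniform-small-beta-feedback} rather than only its qualitative statement. Matching the precise explicit constants in $\lambda_*$ is secondary bookkeeping on the choice of $\delta$.
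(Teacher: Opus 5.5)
Your proof is correct, but it takes a heavier route than the paper at the step you call the main obstacle, and your interior estimate as written does not give the stated explicit $\lambda_*$. The skeleton matches the paper: apply \Cref{thm:convergence} at each fixed $\beta$, use $\mu_\beta=1-2\beta K_\beta\ge\varepsilon$, bound $\osc_{\D}V_\beta$ by $M=\osc_{\D}C/\rho$ so that the gap is at most $\log d+1$, and read off $B_*$.

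\textbf{The boundary step is not an obstacle.} In the boundary argument of \Cref{thm:convergence}, semiconvexity enters only through the value part of $\psi_j-\psi_i$, namely $2\beta(\partial_iV-\partial_jV)\le 2\beta(dM_V+K)$. So $K$ appears only multiplied by $\beta$, and the margin \eqref{eq:uniform-curvature-margin} already gives $\beta K_\beta\le(1-\varepsilon)/2$. The paper simply chooses $\beta_*$ with $d\beta M\le\frac12$, so that $\beta(dM+K_\beta)\le 1$. It then bounds the explicit constants in \eqref{eq:stability-rate-constants} from below: $\lambda_{\mathrm{int}}\ge 2\varepsilon q_{\min}e^{-3}$ and $\lambda_\partial\ge q_{\min}(1-e^{-1})e^{-1}/[2d(\log d+1)]$. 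No new estimate is needed.

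Your detour is still valid. The margin gives $\sup_\beta\beta K_\beta\le\frac12$, so the two-sided bound $\max_i g_i-\min_i g_i\le A_\beta=O(\sqrt\beta)$ from the proof of \Cref{thm:uniform-small-beta-feedback} applies. It even improves the boundary factor to $e^{-1/2}$, via $\sqrt{\eta_i\eta_j}\,e^{x/2}=\eta_j e^{\beta(\partial_jV-\partial_iV)}$. The cost is twofold:
\begin{itemize}
\item The entropy-production inequality now depends on the HJB equation. The paper deliberately keeps it a purely semiconvexity-based mechanism.
\item $\beta_*$ now depends on $q_{\max}$ and the feedback constants, not only on $d$ and $M$.
\end{itemize}

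\textbf{The interior region needs no control of the exponential factors.} The inequality $x(e^{x/2}-e^{-x/2})\ge x^2$ holds for every real $x$, and this is what the paper uses. Your weaker bound $x^2e^{-|x|/2}$, combined with $|x|\le\log(1/\delta)+1$, costs an extra factor $e^{-1/2}\sqrt{\delta}$. Your boundary step forces $\delta\le e^{-2}/d$, and with that threshold the interior rate becomes $2\varepsilon q_{\min}e^{-7/2}d^{-1/2}$ rather than $2\varepsilon q_{\min}e^{-3}$. For small $\varepsilon$ the interior term is the binding one, so your argument would not deliver the explicit $\lambda_*$ in the statement. Your claim that $\delta$ of order $e^{-1}/d$ ``should produce'' $e^{-3}$ is therefore unsupported. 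With the sharp inequality, the same $\delta=e^{-2}/d$ gives $2\varepsilon q_{\min}e^{-2}\ge 2\varepsilon q_{\min}e^{-3}$, and everything matches.
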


\emph{Sketch of proof.}
This is a uniform-constant consequence of \Cref{thm:convergence}.
Comparison with the reference control bounds the oscillation of
$V_\beta$ independently of $\beta$. The margin in
\eqref{eq:uniform-curvature-margin} keeps the strong-convexity
modulus positive, while both the interior and boundary constants
from that theorem remain bounded away from zero. Its dissipation
estimate therefore has a common positive rate. The bounded initial
free-energy gap and uniform strong convexity also give a common
state-error prefactor; no new boundary argument is required.

\begin{corollary}[Extreme-temperature limits of the stable equilibria]
\label{cor:beta-stability-limits}
Assume \Cref{ass:symmetry} and keep $\rho$, $C$, and $q$ fixed.

\begin{enumerate}[label=(\roman*)]
\item Suppose that, for every sufficiently small $\beta>0$,
\Cref{ass:regularity} holds with constant $K_\beta$ satisfying
\eqref{eq:uniform-curvature-margin}. Put
\[
u:=\frac1d\one,
\qquad
M:=\frac{\osc_{\D}C}{\rho}.
\]
For all sufficiently small $\beta$, let $\eta_\beta^\infty$ be the
equilibrium from \Cref{thm:convergence}. Then
\[
\KL(\eta_\beta^\infty\Vert u)
\le2\beta M,
\qquad
\|\eta_\beta^\infty-u\|_1
\le2\sqrt{\beta M}.
\]
Moreover, the exponential decay rate in
\Cref{thm:convergence} can be chosen bounded away from zero uniformly
for all sufficiently small $\beta$.

The limiting reference dynamics has the unique equilibrium $u$ and
satisfies
\[
\KL(\eta^q(t;\eta_0)\Vert u)
\le
e^{-dq_{\min}t}
\KL(\eta_0\Vert u),
\qquad
\eta_0\in\D.
\]

\item Suppose that, for every sufficiently large $\beta$,
\Cref{ass:regularity} holds with constant $K_\beta$ satisfying
$2\beta K_\beta<1$. Let $\eta_\beta^\infty$ be the corresponding
equilibrium, set
\[
Q:=(d-1)q_{\max},
\qquad
\mathcal M:=\operatorname*{argmin}_{\D}C.
\]
Then
\[
0\le
C(\eta_\beta^\infty)-C_{\min}
\le
\frac{Q+\frac{\rho}{2}\log d}{\beta}.
\]
Consequently,
\[
\operatorname{dist}(\eta_\beta^\infty,\mathcal M)
\longrightarrow0.
\]
In particular, if $C$ has a unique minimizer $\bar\eta$, then
$\eta_\beta^\infty\to\bar\eta$.
\end{enumerate}
\end{corollary}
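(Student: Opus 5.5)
The plan is to derive both parts from the variational characterization of $\eta_\beta^\infty$ as the unique minimizer of $\mathcal F_\beta$ (\Cref{thm:convergence}), comparing $\mathcal F_\beta$ at $\eta_\beta^\infty$ with its value at a well-chosen competitor. Two a priori value bounds are needed throughout. First, the KL term is nonnegative, so $V_\beta\ge C_{\min}/\rho$. Second, using the zero rate control from $\eta_0$ keeps the state constant. Since $\ell(0,q)=q$, this gives
\[
V_\beta(\eta_0)\le \frac{C(\eta_0)}{\rho}+\frac{1}{\beta\rho}\sum_i\eta_{0,i}\sum_{j\ne i}q_{ij}(\eta_0)\le \frac{C(\eta_0)+Q/\beta}{\rho}.
\]
For small $\beta$ we use instead the reference control, which has zero KL cost. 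It gives $V_\beta\le \max_\D C/\rho$, hence $\osc_\D V_\beta\le M$.

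For (i), I would use the identity $\sum_i\eta_i\log\eta_i=\KL(\eta\Vert u)-\log d$. Then $\mathcal F_\beta(\eta_\beta^\infty)\le\mathcal F_\beta(u)$ becomes
\[
\KL(\eta_\beta^\infty\Vert u)\le 2\beta\bigl(V_\beta(u)-V_\beta(\eta_\beta^\infty)\bigr)\le 2\beta M.
\]
Pinsker's inequality $\|\eta-u\|_1\le\sqrt{2\KL(\eta\Vert u)}$ then gives $\|\eta_\beta^\infty-u\|_1\le 2\sqrt{\beta M}$. The uniform decay rate is exactly the $\lambda_*$ of \Cref{thm:uniform-small-beta-stability}.

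For the reference dynamics, symmetry makes $u$ stationary. I would differentiate $\KL(\eta^q\Vert u)=\sum_i\eta_i\log\eta_i+\log d$ first in the interior, and extend to boundary initial data by continuity. Pairing the two orientations of each edge gives
\[
\frac{\dd}{\dd t}\KL=-\sum_{i<j}q_{ij}(\eta_i-\eta_j)(\log\eta_i-\log\eta_j)\le -q_{\min}\sum_{i<j}(\eta_i-\eta_j)(\log\eta_i-\log\eta_j),
\]
where each summand is nonnegative. The complete-graph identity $\sum_{i<j}(\eta_i-\eta_j)(a_i-a_j)=d\sum_i\eta_ia_i-\sum_ia_i$ with $a=\log\eta$, together with Jensen's bound $-\sum_i\log\eta_i\ge d\log d$, gives
\[
\sum_{i<j}(\eta_i-\eta_j)(\log\eta_i-\log\eta_j)\ge d\Bigl(\sum_i\eta_i\log\eta_i+\log d\Bigr)=d\,\KL(\eta\Vert u).
\]
Gronwall's inequality then yields the stated decay, and the decay forces uniqueness of the equilibrium.

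For (ii), I would first derive an upper bound on $C(\eta_\beta^\infty)$ from the HJB equation. By \Cref{thm:gibbs}, $V_\beta$ is $\mathcal C^{1,1}$ and the HJB equation holds classically at the interior point $\eta_\beta^\infty$. Stationarity of $\mathcal F_\beta$ makes $\psi_j-\psi_i=0$, so $e^{-\beta((D_\T V)_j-(D_\T V)_i)}=\sqrt{\eta_j/\eta_i}$. Substituting into \eqref{eq:hamiltonian} and using symmetry, with $\eta=\eta_\beta^\infty$, gives
\[
\rho V_\beta(\eta)=C(\eta)+\frac1{2\beta}\sum_{i\ne j}q_{ij}(\eta)\bigl(\sqrt{\eta_i}-\sqrt{\eta_j}\bigr)^2\ge C(\eta).
\]

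Next, pick $\bar\eta\in\mathcal M$. The entropy lies in $[-\log d,0]$, so $\mathcal F_\beta(\eta_\beta^\infty)\le\mathcal F_\beta(\bar\eta)$ gives $2\beta V_\beta(\eta_\beta^\infty)\le 2\beta V_\beta(\bar\eta)+\log d$. With the zero-control bound $V_\beta(\bar\eta)\le (C_{\min}+Q/\beta)/\rho$ this becomes
\[
C(\eta_\beta^\infty)\le\rho V_\beta(\eta_\beta^\infty)\le C_{\min}+\frac{Q+\frac\rho2\log d}{\beta}.
\]
Finally, if $\operatorname{dist}(\eta_{\beta_n}^\infty,\mathcal M)\ge\delta$ along some sequence $\beta_n\to\infty$, compactness of $\D$ gives a limit point $\hat\eta$ outside $\mathcal M$. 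Continuity of $C$ then forces $C(\hat\eta)=C_{\min}$, a contradiction.

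The main obstacle I expect is the HJB step in (ii). It requires the HJB equation to hold pointwise at $\eta_\beta^\infty$ with $D_\T V_\beta$ read as the true gradient, and the stationarity condition $\Pi_\T\nabla\mathcal F_\beta=0$ to be identified with constancy of the $\psi_i$; both use the interior $\mathcal C^{1,1}$ regularity of \Cref{thm:gibbs}.
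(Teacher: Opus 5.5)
Your proposal is correct and follows essentially the same route as the paper: free-energy comparison with $u$ plus Pinsker for (i), the paired symmetric KL dissipation with $\sum_i\log\eta_i\le -d\log d$ for the reference dynamics, and for (ii) the zero-rate bound at $\bar\eta$, the free-energy comparison, and the paired HJB identity $\rho V_\beta=C+\frac1\beta\sum_{i<j}q_{ij}(\sqrt{\eta_i}-\sqrt{\eta_j})^2$ at the equilibrium. You differ in only two places, both valid. You extend the reference decay estimate to boundary initial data by continuous dependence on the initial state, where the paper uses strict positivity for $t>0$ and Gronwall on $[\varepsilon,t]$. You also read off uniqueness of the reference equilibrium directly from the decay estimate, where the paper uses an inward-drift and zero-dissipation argument.
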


\begin{remark}[Uniformity of the large-$\beta$ decay rate]
The large-$\beta$ conclusion in
\Cref{cor:beta-stability-limits} concerns the equilibria and does not
assert that the exponential rate in \Cref{thm:convergence} is uniform
as $\beta\to\infty$. Indeed, the hypothesis
$2\beta K_\beta<1$ already forces $K_\beta=O(\beta^{-1})$, and the
constants in the proof also depend on
$\beta\osc_{\D}V_\beta$. A sufficient additional condition for a
uniform large-$\beta$ rate is
\[
\inf_{\beta\ge\beta_0}(1-2\beta K_\beta)>0,
\qquad
\sup_{\beta\ge\beta_0}
\beta\osc_{\D}V_\beta<\infty.
\]
These bounds are not consequences of the present assumptions.
\end{remark}

\begin{remark}[Standalone entropy-production mechanism] 
The key estimate in the proof is the uniform entropy-production inequality \eqref{eq:entropy-production}. It says that the instantaneous free-energy dissipation controls the current free-energy gap, and together with the dissipation identity \eqref{eq:dissipation-identity} directly yields exponential decay. The proof of \eqref{eq:entropy-production} does not use the HJB equation,
the optimal-control origin of $V$, or the uniform optimal-rate bounds
from \Cref{thm:gibbs}. More generally, the same entropy-production
argument applies when $V$ is replaced by any prescribed potential
continuous on $\D$, globally $K$-semiconvex, and locally of class
$\mathcal{C}^{1,1}$ on $\Dint$, with positive symmetric reference rates
satisfying the stated bounds and $2\beta K<1$. In particular, convex
potentials correspond to $K=0$, so this restriction is automatic for
every $\beta>0$. The same decay estimates then hold along any global
interior solution of the associated Gibbs dynamics. Global
well-posedness for a separately prescribed potential must be justified
separately; for the planner value considered here, it is supplied by
\Cref{thm:gibbs}.
\end{remark}

\subsection{Main result 4: Application to potential mean field games}
For this result only, we specialize to a population-independent reference generator:
\begin{equation}\label{eq:constant-reference}
q_{ij}(\eta)\equiv \bar q_{ij}>0,
\qquad i\ne j.
\end{equation}
No symmetry assumption is needed for the correspondence.
We retain $a^*(\eta)$ for the planner's feedback map, use $\alpha$ for
individual time-dependent controls, and write $\widehat\alpha(z)$ for
the individual Gibbs minimizer as a function of a value vector.

Fix a deterministic population flow $m:[0,\infty)\to\D$. A representative agent in state $X_s\in[d]$ chooses a locally integrable time-dependent transition-rate generator $\alpha(s)\in A$. Starting at time $t$ from state $i$, its discounted cost is
\[
J_i^{\mathrm{MFG}}(t;m,\alpha)
:=
\mathbb E_{t,i}^{\alpha}
\left[
\int_t^\infty e^{-\rho(s-t)}
\left(
c(X_s,m(s))+\frac1\beta\KL(\alpha_{X_s}(s)\Vert \bar q_{X_s})
\right)\dd s
\right].
\]
We impose the potential condition
\begin{equation}\label{eq:potential-condition}
c(i,\eta)=\partial_iC(\eta),
\qquad i\in[d].
\end{equation}
The derivatives are understood through a fixed $\mathcal{C}^{1,1}$ extension of $C$ to a neighborhood of the affine simplex; intrinsically, only their differences affect the optimal rates.

For $z\in\R^d$ and $i\in[d]$, define the individual row Hamiltonian
\begin{equation}\label{eq:MFG-row-Hamiltonian}
H_i^{\mathrm{MFG}}(z)
:=
\inf_{\alpha_i\in[0,\infty)^{d-1}}
\left\{
\sum_{j\ne i}\alpha_{ij}(z_j-z_i)
+\frac1\beta\KL(\alpha_i\Vert \bar q_i)
\right\},
\end{equation}
where $\alpha_i=(\alpha_{ij})_{j\ne i}$ and $\bar q_i=(\bar q_{ij})_{j\ne i}$. The minimization separates over outgoing edges, and strict convexity gives the unique minimizer
\begin{equation}\label{eq:MFG-row-minimizer}
\widehat\alpha_{ij}(z)=\bar q_{ij}e^{-\beta(z_j-z_i)},
\qquad j\ne i,
\end{equation}
with
\begin{equation}\label{eq:MFG-row-Hamiltonian-explicit}
H_i^{\mathrm{MFG}}(z)
=
\frac1\beta\sum_{j\ne i}\bar q_{ij}\left(1-e^{-\beta(z_j-z_i)}\right).
\end{equation}
The row Hamiltonian depends only on component differences, so
$H_i^{\mathrm{MFG}}(z+\gamma\one)=H_i^{\mathrm{MFG}}(z)$ for every scalar $\gamma$.

Under \eqref{eq:constant-reference}, the planner HJB equation \eqref{eq:hjb} is equivalently
\begin{equation}\label{eq:planner-HJB-potential-MFG}
\rho V(\eta)
=
C(\eta)+\sum_{i=1}^d\eta_iH_i^{\mathrm{MFG}}(DV(\eta)),
\qquad \eta\in\Dint.
\end{equation}

For a given flow $m$, let $v^m(t)=(v_i^m(t))_{i=1}^d$ denote the representative-agent value vector. Its discounted Bellman system is
\begin{equation}\label{eq:MFG-Bellman}
\rho v_i^m(t)-\dot v_i^m(t)
=
c(i,m(t))+H_i^{\mathrm{MFG}}(v^m(t)),
\qquad i\in[d].
\end{equation}
The corresponding unique minimizing feedback is $\widehat\alpha(v^m(t))$, with diagonal entries chosen so that each row sums to zero.

{\color{black}
\begin{definition}[Discounted finite-state MFG equilibrium]
Fix $\eta_0\in\D$. A triple $(m,v,\alpha)$ is a discounted finite-state mean field game equilibrium if:
\begin{enumerate}[label=(\roman*)]
\item \emph{Optimality.} For the exogenous population flow $m$, the bounded locally absolutely continuous value vector $v$ solves \eqref{eq:MFG-Bellman}, and $\alpha(t)=\widehat\alpha(v(t))$ almost everywhere, where \eqref{eq:MFG-row-minimizer} defines the unique Gibbs minimizer.
\item \emph{Consistency.} The population starts from $m(0)=\eta_0$ and satisfies
\begin{equation}\label{eq:MFG-consistency}
\dot m(t)=m(t)\alpha(t).
\end{equation}
\end{enumerate}
Thus the flow used by the representative agent to compute a best response coincides with the flow generated when the entire population uses that best response.
\end{definition}
}

\begin{theorem}[Planner-to-MFG correspondence]\label{thm:planner-to-MFG}
Let \Cref{ass:regularity} hold, suppose \eqref{eq:constant-reference} and \eqref{eq:potential-condition} hold, and fix $\eta_0\in\Dint$. Let $\eta^*$ be the optimal closed-loop planner trajectory from \Cref{thm:gibbs}, and let $\alpha^*(t):=a^*(\eta^*(t))$ be the time-dependent rate matrix obtained by evaluating the planner feedback \eqref{eq:gibbs-feedback} along this trajectory. Then there exists a unique bounded representative-agent value vector $v^*:[0,\infty)\to\R^d$ for the exogenous flow $m=\eta^*$ such that
\begin{equation}\label{eq:value-gradient-differences}
v_j^*(t)-v_i^*(t)
=
\partial_jV(\eta^*(t))-\partial_iV(\eta^*(t)),
\qquad i,j\in[d].
\end{equation}
Consequently, the representative agent's unique Gibbs best-response feedback coincides with the planner feedback, and the planner forward equation is exactly the MFG consistency equation. Hence $(\eta^*,v^*,\alpha^*)$ is a discounted finite-state MFG equilibrium. In particular, the best-response feedback to the fixed population flow $\eta^*$ is unique up to equality almost everywhere in time.
\end{theorem}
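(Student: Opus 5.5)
We will take as candidate the vector $p(t):=DV(\eta^*(t))$, using the fixed ambient representative determined by a $\mathcal C^{1,1}$ extension, and correct it by a scalar multiple of $\one$. The plan has three steps. First, show that $p$ satisfies the planner adjoint equation, which is the Bellman system \eqref{eq:MFG-Bellman} up to a common scalar. Second, add $\gamma(t)\one$ to obtain an exact, bounded solution $v^*$. Third, prove uniqueness of bounded solutions of \eqref{eq:MFG-Bellman} and read off the equilibrium.

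Formally, differentiate \eqref{eq:planner-HJB-potential-MFG} in $\eta_i$ and use the envelope property: $\partial_z$ of $\sum_k\eta_kH_k^{\mathrm{MFG}}(z)$ at $z=DV$ equals the flux $\eta a^*$. This gives
\[
\rho\,\partial_iV=\partial_iC+H_i^{\mathrm{MFG}}(DV)+\sum_{k,j}\eta_k a^*_{kj}\,\partial_i\partial_jV .
\]
Along $\dot\eta^*=\eta^*a^*$, the last term is exactly $\frac{\dd}{\dd t}\partial_iV(\eta^*(t))$. Hence
\[
\rho p_i-\dot p_i=c(i,\eta^*)+H_i^{\mathrm{MFG}}(p),
\]
using \eqref{eq:potential-condition}. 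Since $V$ is only $\mathcal C^{1,1}$ (\Cref{thm:gibbs}), we will not differentiate twice. Instead we use a first-order dynamic-programming sensitivity identity. For $h\in\T$ and small $s$, apply the optimal open-loop control $\alpha^*$ on $[t,t+s]$ from the perturbed state $\eta^*(t)+\varepsilon h$. The state is linear in the initial condition, so the perturbed trajectory is $\eta^*(t+s)+\varepsilon hP^{\alpha^*}(t,t+s)$. The DPP inequality, combined with equality along the optimum and the differentiability of $V$, then gives
\[
DV(\eta^*(t))\cdot h=\int_t^{t+s}e^{-\rho(r-t)}\,\partial_\eta f(\eta^*(r),\alpha^*(r))\bigl[hP(t,r)\bigr]\dd r+e^{-\rho s}DV(\eta^*(t+s))\cdot hP(t,t+s).
\]
Taking the inequality in both directions of $\varepsilon$ yields equality. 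Here $\partial_\eta f[k]=\sum_iC_i k_i+\beta^{-1}\sum_i k_i\KL(\alpha_i\Vert\bar q_i)$, which is linear in $\eta$ because $\bar q$ is constant.

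The identity says that $t\mapsto p(t)$ restricted to $\T$ is locally Lipschitz and satisfies the backward linear equation
\[
\rho p-\dot p=\partial C(\eta^*)+\beta^{-1}\KL(\alpha^*_\cdot\Vert\bar q_\cdot)+\alpha^*p
\]
in the tangential sense, that is, modulo $\one$. At the Gibbs rates, $\beta^{-1}\KL(\alpha^*_i\Vert\bar q_i)+(\alpha^*p)_i$ equals $H_i^{\mathrm{MFG}}(p)$. So modulo $\one$ the Bellman system holds, with a bounded defect $g(t)\one$. We then set $v^*:=p+\gamma\one$ with $\rho\gamma-\dot\gamma=-g$, choosing the bounded solution $\gamma(t)=-\int_t^\infty e^{-\rho(r-t)}g(r)\dd r$. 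Boundedness of $g$ follows from the global Lipschitz bound on $D_\T V$ and the rate bounds \eqref{eq:global-rate-bounds}. Then $v^*$ is bounded, solves \eqref{eq:MFG-Bellman} exactly, and satisfies \eqref{eq:value-gradient-differences}.

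For uniqueness, suppose $v,\tilde v$ are two bounded solutions and set $\delta=v-\tilde v$. The map $z\mapsto c+H^{\mathrm{MFG}}(z)$ is monotone of Metzler type, locally Lipschitz, and bounded on the bounded ranges involved. Subtracting the equations and looking at the maximal component, $\rho\max\delta-\frac{\dd}{\dd t}\max\delta\le0$ holds in the Dini sense, because $H_i$ is nonincreasing in $z_j-z_i$ at the maximizing index. Hence $e^{-\rho t}\max\delta$ is nondecreasing. Boundedness forces $\max\delta\le0$, and symmetrically $\min\delta\ge0$.

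The equilibrium conclusion is then immediate. The row minimizer \eqref{eq:MFG-row-minimizer} depends only on the differences in \eqref{eq:value-gradient-differences}, so $\widehat\alpha(v^*(t))=a^*(\eta^*(t))=\alpha^*(t)$. The planner forward equation is \eqref{eq:MFG-consistency}. The best response is unique a.e. by strict convexity together with the uniqueness of $v^*$.

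The main obstacle is the sensitivity identity: justifying exact first-order equality, rather than a one-sided inequality, and interchanging limits with only $\mathcal C^{1,1}$ regularity. Differentiability of $V$ at interior points, together with the DPP equality along the optimal trajectory, handles this. Linearity of the perturbed trajectory in $\varepsilon$ and the bounded rates make the integrand continuous.
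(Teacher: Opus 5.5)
Your derivation of the adjoint equation and the scalar correction follows the paper's Steps~1--2. Both use the same first-order DPP sensitivity along the frozen open-loop rates $\alpha^*$, projection modulo $\one$, and a bounded discounted integral for $\gamma$. The paper fixes $T$ and differentiates $Y_T(t)$ rather than letting $s\downarrow0$, which is cosmetic.

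Two small corrections.
\begin{enumerate}
\item Take $p:=D_{\T}V(\eta^*)$ rather than an ambient representative. \Cref{thm:gibbs} gives no $\mathcal C^{1,1}$ extension of $V$ off the simplex. With $p\in\T$, the defect is explicitly $-\frac1d\sum_i[c(i,\eta^*)+H_i^{\mathrm{MFG}}(p)]$, and $v^*$ is automatically locally absolutely continuous.
\item In your comparison argument the monotonicity is stated backwards. $H_i^{\mathrm{MFG}}$ is \emph{increasing} in each $z_j-z_i$, with derivative $\bar q_{ij}e^{-\beta(z_j-z_i)}>0$. At an index maximizing $\delta=v-\tilde v$ one has $v_j-v_i\le\tilde v_j-\tilde v_i$, hence $H_i^{\mathrm{MFG}}(v)\le H_i^{\mathrm{MFG}}(\tilde v)$, and this is what gives $\rho\delta_i-\dot\delta_i\le0$.
\end{enumerate}
With that fixed, your maximum-principle proof of uniqueness of bounded Bellman solutions is a valid, purely analytic alternative to the paper's verification-based uniqueness.

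The genuine gap is the missing verification step. Solving \eqref{eq:MFG-Bellman} is what the equilibrium definition asks for. The theorem, however, also asserts that $v^*$ is the representative agent's \emph{value}, that the agent's best response equals the planner feedback, and that the best response to $\eta^*$ is unique a.e. Nothing in your proposal connects $v^*$ to $\inf_\alpha J_i^{\mathrm{MFG}}(t;\eta^*,\alpha)$. Your phrase ``strict convexity together with the uniqueness of $v^*$'' cannot give uniqueness of best responses. Strict convexity only makes the \emph{pointwise} Hamiltonian minimizer unique. Uniqueness of the bounded Bellman solution says nothing about an arbitrary optimal admissible control, whose rates are merely locally integrable and possibly unbounded.

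What you need is the verification identity:
\begin{enumerate}
\item For admissible $\alpha$ with state law $\mu$ solving $\dot\mu=\mu\alpha$, apply the product rule to $e^{-\rho(s-t)}\mu(s)v^*(s)$ (discounted Dynkin). Combine it with the Hamiltonian inequality $\dot v_i^*-\rho v_i^*+(\alpha v^*)_i+L_i^\alpha\ge0$.
\item Treat infinite-cost controls trivially, and remove the terminal term using boundedness of $v^*$. This yields $v^*\le J^{\mathrm{MFG}}(\cdot;\eta^*,\alpha)$, with equality for the bounded Gibbs control. Hence $v^*$ is the value and $\alpha^*$ is a best response.
\item For uniqueness, start from the positive law $\eta_0$. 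Every state probability stays positive at finite times by \eqref{eq:interior-preservation}. So an optimal control has zero integrated gap, hence zero gap a.e.\ in every state, hence equals $\widehat\alpha(v^*)=\alpha^*$ a.e.
\end{enumerate}
This is the paper's Step~3. It also identifies every bounded Bellman solution with the value, which makes your comparison argument optional.
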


\begin{remark}[Scope of the correspondence]
\Cref{thm:planner-to-MFG} proves that the optimal planner pair induces an MFG equilibrium and that the representative agent's best-response feedback to the induced population flow is unique. It does not, by itself, exclude the existence of other MFG fixed points that are not generated by this planner trajectory. A claim of global uniqueness of the MFG equilibrium would require an additional monotonicity, contraction, or planner-equivalence argument.
\end{remark}

\begin{remark}[Why constant reference rates are used]
The constant-reference assumption \eqref{eq:constant-reference} makes the individual row Hamiltonians independent of the population variable. A population-weighted row decomposition remains available when the reference rates depend on the population, but differentiating the planner Hamiltonian then produces additional terms involving derivatives of those rates. These terms are absent from the representative agent's Bellman equation, where the population flow is treated as exogenous. Thus the uncorrected potential condition \eqref{eq:potential-condition} is sufficient for the direct correspondence in the constant-reference case. For state-dependent reference rates, an appropriate correction to the individual cost would generally be required.
\end{remark}

\medskip
\noindent\emph{Sketch of proof.}
Let $p(t)=D_{\T}V(\eta^*(t))$ and regard
$\alpha^*(t)=a^*(\eta^*(t))$ as a fixed open-loop control.
If $V$ were of class $\mathcal{C}^{2}$, differentiating
\eqref{eq:planner-HJB-potential-MFG} in a tangent direction $h$,
using the Gibbs minimizer, and applying the chain rule along the
optimal trajectory would give
\[
\rho\,p(t)\cdot h
=\sum_i h_i\bigl[c(i,\eta^*(t))+H_i^{\mathrm{MFG}}(p(t))\bigr]
+D_{\T}^{2}V(\eta^*(t))[\dot\eta^*(t),h],
\]
Here symmetry of the tangential Hessian and the chain rule give
$D_{\T}^{2}V(\eta^*(t))[\dot\eta^*(t),h]=\dot p(t)\cdot h$.
This yields the planner's current-value Pontryagin adjoint equation
\eqref{eq:p-equation-with-kappa}. Since the available regularity is
$\mathcal{C}^{1,1}$, the detailed proof obtains that equation instead
from a first-order DPP sensitivity identity, without evaluating a
Hessian along the trajectory. The adjoint equation and the individual
Bellman equation \eqref{eq:MFG-Bellman} differ only by a scalar term
common to all states, which may depend on time. This motivates a
representative-agent value of the form $v^*=p+\gamma\one$; a bounded
scalar correction removes the common term and preserves every Gibbs
rate. Finally, Hamiltonian minimization and discounted Dynkin's
formula verify individual optimality. The planner forward equation
then supplies consistency. The full proof is given in
\Cref{app:potential-mfg}.

\section{Single-asset market making
as a potential mean field game}\label{sec:market-making}

We apply our framework to a population of dealers trading one asset.
An exact relation between spread revenue and entropy determines the
reference rates and the adjustment to the state cost.
Our convex inventory-crowding model satisfies the assumptions for
regularity and the potential-MFG correspondence.
With symmetric reference rates, the inventory distribution converges
exponentially to equilibrium.

\subsection{Inventory states, quotes, and the finite-dealer model}
Consider $N$ dealers trading one asset with external customers. Let
$\mathcal Z=\{z_1,\ldots,z_d\}$ be distinct admissible inventory levels, including negative levels if short positions are permitted. Dealer $n$ occupies state $I_t^n\in[d]$, holds $Q_t^n=z_{I_t^n}$ units, and observes the empirical distribution
\[
\eta_i^N(t)=\frac1N\sum_{n=1}^N\mathbf 1_{\{I_t^n=i\}}.
\]
Thus $d$ counts inventory positions, not assets. For a transition $i\to j$, the transaction size is $s_{ij}=|z_j-z_i|$. A dealer in state $i$ quotes a per-unit offset $\delta_{ij}^n$: it buys at $S_t-\delta_{ij}^n$ when $z_j>z_i$, and sells at $S_t+\delta_{ij}^n$ when $z_j<z_i$. Each execution earns the marked-to-market spread $D_{ij}^n:=s_{ij}\delta_{ij}^n$, provided the reference price does not jump at execution.

This is an RFQ-style model with exogenously specified request sizes and whole-trade executions, as motivated by \cite{mmBG2021,mmBBG2023}. We assume positive baseline flow for every retained transition. Allowing all sizes joining the finite inventory grid then gives the complete graph required by \Cref{ass:viscosity}. Unit-sized trades alone would instead give a sparse graph. Finite inventory bounds are enforced by offering no transition outside $\mathcal Z$. Requests are not modeled as customers optimally splitting orders or selecting a different size from an executable menu; such substitution would require joint demand modeling. The trade counterparty is outside the dealer population, so executions change only one modeled inventory. Direct interdealer transactions would require joint jumps.

For now allow $\delta_{ij}^n\in\R$, and let
\begin{equation}\label{eq:mm-demand}
a_{ij}^n=A_{ij}\exp(-\beta s_{ij}\delta_{ij}^n),
\qquad A_{ij}>0,\quad \beta>0.
\end{equation}
Quote withdrawal implements $a_{ij}^n=0$ as a limiting action. Positive rates and finite offsets are in one-to-one correspondence. The baseline $A_{ij}$ is population independent in the game specialization. It describes the execution rate at zero spread; it is not a probability. For feedback rates $a^n_{ij}(\boldsymbol i)$, the joint-state generator is
\[
\mathcal L^{a}f(\boldsymbol i)
=\sum_{n=1}^N\sum_{j\ne i_n}
a^n_{i_nj}(\boldsymbol i)
\bigl[f(\boldsymbol i^{n\to j})-f(\boldsymbol i)\bigr].
\]
Predictable intensities can be implemented by thinning independent Poisson random measures, subject to integrability and nonexplosion. Under a common feedback $a(\eta^N)$, the empirical measure jumps by $(e_j-e_i)/N$ at rate $N\eta_i^Na_{ij}(\eta^N)$. Its drift is $\eta^Na(\eta^N)$, giving the formal limiting equation $\dot\eta=\eta a$. Transaction sizes remain discrete in this limit: it is the fraction of dealers changing state, not the number of units in a trade, that becomes infinitesimal.

Dealer $n$ minimizes the discounted cost
\begin{equation}\label{eq:mm-N-objective}
J_n^N=\mathbb E\int_0^\infty e^{-\rho t}
\left[r(I_t^n,\eta_t^N)
-\sum_{j\ne I_t^n}s_{I_t^nj}\delta_{I_t^nj}^n(t)a_{I_t^nj}^n(t)\right]\dd t.
\end{equation}
Here $r$ is a specified running inventory-risk or carrying-cost functional, not a terminal utility. If $S$ is a continuous martingale and the discounted inventory-price stochastic integral is integrable, expected marked-to-market trading gains give this spread-revenue criterion after subtracting the specified inventory costs. Bounded inventories and a constant-volatility Brownian reference price suffice for square integrability of that discounted stochastic integral. Price drift, adverse selection, fees, and endogenous price impact would add terms and are not included. In particular, \eqref{eq:mm-N-objective} is not obtained by simply discounting terminal marked-to-market wealth.

\subsection{Why entropy appears: an exact revenue identity}
The exponential per-unit intensity in \cite{mmAS2008,mmGLFT2013} motivates the form of \eqref{eq:mm-demand}, but its common coefficient is an additional restriction. For a general law $a_{ij}=A_{ij}e^{-k_{ij}\delta_{ij}}$, the entropy weight is $s_{ij}/k_{ij}$. A common weight $1/\beta$ therefore requires $k_{ij}=\beta s_{ij}$: sensitivity is common with respect to the total concession $D_{ij}=s_{ij}\delta_{ij}$. This is not a change of units that removes heterogeneous demand sensitivities. The size-dependent literature cited above does not establish this scaling as a general empirical law; in particular the FX study \cite{mmBBG2023} uses logistic curves with size-consistent shape parameters within tiers.

A possible acceptance model for nonnegative concessions is that size-specific prospective customers arrive at rate $A_{ij}$ and accept $D$ if $B\ge D$, where $B$ has an exponential distribution with parameter $\beta$, common across sizes. Then $A_{ij}\Pr(B\ge D)=A_{ij}e^{-\beta D}$. The substantive assumption is a size-independent distribution of total-dollar concession tolerance. It implies a myopic optimal total charge $1/\beta$, hence a per-unit charge inversely proportional to trade size. This acceptance construction applies to $D\ge0$ and caps rates at $A_{ij}$; it does not justify the unrestricted negative-offset extension, which is a separate reduced-form demand specification.

Define the myopic reference rate by $\bar q_{ij}=A_{ij}/e$. Indeed, $D A_{ij}e^{-\beta D}$ is maximized at $D=1/\beta$, with execution rate $\bar q_{ij}$ and revenue rate $\bar q_{ij}/\beta$. Solving \eqref{eq:mm-demand} for $D$ yields the identity
\begin{equation}\label{eq:mm-entropy-identity}
-s_{ij}\delta_{ij}a_{ij}
=\frac1\beta a_{ij}\log\frac{a_{ij}}{A_{ij}}
=\frac1\beta\ell(a_{ij},\bar q_{ij})-\frac{\bar q_{ij}}\beta.
\end{equation}
The identity extends to $a_{ij}=0$ by continuity. Thus entropy is the exact instantaneous revenue shortfall relative to myopic quoting, not an independently imposed preference for a nominal policy. A dynamically optimal dealer may accept this shortfall to improve future inventory.

Put $b_i=\beta^{-1}\sum_{j\ne i}\bar q_{ij}$ and
\begin{equation}\label{eq:mm-corrected-cost}
c(i,\eta)=r(i,\eta)-b_i.
\end{equation}
The running cost in \eqref{eq:mm-N-objective} becomes
$c(i,\eta)+\beta^{-1}\KL(a_i\Vert\bar q_i)$ exactly. The correction cannot generally be discarded: even constant edge rates can have different row sums, so the myopic earning opportunity depends on the inventory state.

\subsection{Inventory crowding and the potential correspondence}
An individual quadratic inventory penalty captures one's own exposure, but not the additional risk of trying to unwind a position when other dealers hold similar positions. We model this second effect by a specified crowding cost. This is a reduced-form risk charge for shared external liquidity, not a derived market-clearing price-impact model. Constant $\bar q$ leaves customer execution demand fixed; strategic interaction occurs through the inventory cost rather than direct competition among contemporaneous quotes.

Suppose there is a smooth inventory potential $R$ such that $r(i,\eta)=\partial_iR(\eta)$. Then define
\begin{equation}\label{eq:mm-potential}
C(\eta)=R(\eta)-\sum_i b_i\eta_i.
\end{equation}
With the corresponding ambient extensions, $\partial_iC=c(i,\eta)$, precisely \eqref{eq:potential-condition}. Generally $C\ne\sum_i\eta_ic(i,\eta)$: the potential planner is not the minimizer of summed private costs.

Here is a concrete convex specification. Write $z_i^+=\max(z_i,0)$, $z_i^-=\max(-z_i,0)$, and $m_\pm(\eta)=\sum_i\eta_i z_i^\pm$. For $\gamma>0$ and $\Gamma\ge0$, take
\begin{align}
R(\eta)&=\sum_i\eta_i\frac\gamma2z_i^2
+\frac\Gamma2\bigl(m_+(\eta)^2+m_-(\eta)^2\bigr),\label{eq:mm-crowding-potential}\\
r(i,\eta)&=\frac\gamma2z_i^2
+\Gamma\bigl(z_i^+m_+(\eta)+z_i^-m_-(\eta)\bigr).
\label{eq:mm-crowding-cost}
\end{align}
Long positions become more costly when aggregate long inventory increases, and short positions when aggregate short inventory increases. Opposite-side positions are not allowed to cancel this gross crowding measure. These are explicit modeling choices. All costs are smooth in $\eta$ because the inventory levels are fixed, and the Hessian of $R$ is
$\Gamma(z^+(z^+)^\top+z^-(z^-)^\top)$, hence positive semidefinite.

For any positive constant reference generator, $C$ in \eqref{eq:mm-potential} is convex and smooth. Its flux Lagrangian is
\[
L(\eta,w)=C(\eta)+\frac1\beta\sum_{i\ne j}
\left[w_{ij}\log\frac{w_{ij}}{\eta_i\bar q_{ij}}-w_{ij}+\eta_i\bar q_{ij}\right].
\]
Each entropy summand is a convex perspective, with the usual extended-value boundary convention. Consequently \Cref{ass:regularity} holds with $K=0$. This verifies the full joint flux condition, not just convexity of a state cost. The regularity and planner-to-MFG results therefore apply to this financial example for every fixed $\beta>0$ in the unrestricted-rate formulation.

Fix an initial population distribution $\eta_0\in\Dint$,
as required by \Cref{thm:planner-to-MFG}. Along the induced equilibrium flow, the optimal quotes are
\begin{equation}\label{eq:mm-optimal-quotes}
a^*_{ij}(\eta)=\bar q_{ij}e^{-\beta(\partial_jV(\eta)-\partial_iV(\eta))},
\qquad
\delta^*_{ij}(\eta)
=\frac{\beta^{-1}+\partial_jV(\eta)-\partial_iV(\eta)}{s_{ij}}.
\end{equation}
Equivalently, along the flow the gradient difference equals $v_j^*(t)-v_i^*(t)$ by \Cref{thm:planner-to-MFG}. A destination with greater future cost requires greater spread compensation and receives fewer executions. With symmetric $\bar q$, \Cref{thm:convergence} gives exponential convergence of the induced inventory distribution; $2\beta K<1$ is automatic here. Symmetry is a balanced baseline-flow assumption, not a conclusion of exponential demand and not symmetry of the controlled quotes.

\subsection{Admissible quotes, temperature comparisons, and scope}
The unrestricted financial model allows negative offsets, interpreted as sacrificing immediate spread to obtain executions. With a fixed prospective-customer stream and acceptance probabilities, rates must instead satisfy $a_{ij}\le A_{ij}$, equivalently $\delta_{ij}\ge0$. In that constrained model the row optimizer is
\[
a_{ij}^{\mathrm{cap}}=\min\{A_{ij},\bar q_{ij}e^{-\beta(v_j-v_i)}\},
\]
so the unrestricted Gibbs formula cannot be used without a feasibility check.
A useful sufficient check is available for the equilibrium above. Define
$c_-:=\min_{i,\eta}c(i,\eta)$, $c_+:=\max_{i,\eta}c(i,\eta)$ and $\Delta_c=c_+-c_-$. Nonnegativity of entropy gives $v_i^m(t)\ge c_-/\rho$, while choosing the reference rates gives $v_i^m(t)\le c_+/\rho$ for every exogenous flow $m$. Hence
\begin{equation}\label{eq:mm-feasible-quotes}
\beta\Delta_c\le\rho
\quad\Longrightarrow\quad
\beta^{-1}+v_j^*(t)-v_i^*(t)\ge0.
\end{equation}
Under this sufficient condition, the unrestricted equilibrium is feasible and remains optimal in the smaller nonnegative-offset class. Thus the capped acceptance interpretation can be used without changing the equilibrium construction in this parameter regime. Without such a condition, it is the explicitly unrestricted reduced-form model that is covered.

For a particularly transparent specialization choose
$\bar q_{ij}=q_0/(d-1)$, $i\ne j$. The reference is positive and symmetric with common row sum $q_0$, and on the simplex $C_\beta=R-q_0/\beta$. In this financial interpretation, varying $\beta$ changes customers' sensitivity to total transaction concession, while the arrival baselines and inventory-cost
parameters remain fixed. The entropy coefficient is determined by demand rather than independently chosen
as a dealer preference. This additive constant changes the planner value by $-q_0/(\beta\rho)$ and the representative-agent value by the same amount, but changes neither optimal rates nor free-energy gaps. Thus the fixed-$R$ temperature results apply to feedbacks, trajectories, and equilibria of the same inventory-cost model. For raw financial values the additive shift must be restored; they need not share the finite small-$\beta$ value limit of the normalized control problem. With unequal reference row sums, varying $\beta$ at fixed financial $r$ instead changes the state cost nontrivially, so the fixed-$C$ temperature theorems cannot be transferred without further analysis. The numerical experiments in
\Cref{sec:numerical-experiments} illustrate the mathematical mechanisms, not a calibration of customer demand or inventory crowding.

Finally, \eqref{eq:mm-N-objective} specifies the underlying finite-dealer game, whereas \Cref{thm:planner-to-MFG} establishes an equilibrium of its proposed mean field formulation. The drift calculation identifies the candidate population equation; this paper does not prove convergence of finite-player Nash equilibria or an $\varepsilon_N$-Nash property. Such conclusions require additional finite-player approximation estimates. Nor does the correspondence alone establish uniqueness among all MFG fixed points. Within its stated scope, the application gives an exact financial interpretation of entropy and a concrete interacting inventory model to which the existing regularity, equilibrium-construction, and stability results apply.

\section{Numerical experiments}\label{sec:numerical-experiments}

We report two complementary sets of experiments.  The first is a control-derived two-state example in which the value function is obtained by numerically solving the stationary HJB equation before constructing the Gibbs feedback and the closed-loop dynamics.  This tests the full chain
\[
\text{control problem}\longrightarrow V\longrightarrow a^*\longrightarrow\text{closed-loop stability}.
\]
The second set starts from a prescribed convex potential and tests the standalone Gibbs-stability mechanism of \Cref{thm:convergence}, including its behavior across several state-space dimensions.  This distinction reflects the fact that the stability theorem itself does not require the potential to arise from the preceding control problem.

\subsection{A control-derived two-state experiment}

Take $d=2$, write $\eta=(x,1-x)$, and choose
\[
q_{12}=q_{21}=1,\qquad \rho=1,\qquad \beta=2,
\qquad
C(x)=\frac{\kappa}{2}(x-x_{\mathrm{tar}})^2,
\quad \kappa=1.5,\quad x_{\mathrm{tar}}=0.70.
\]
If $v(x):=V(x,1-x)$, then the HJB equation \eqref{eq:hjb} reduces to
\begin{equation}\label{eq:numerical-two-state-hjb}
\rho v(x)
=
C(x)+\frac1\beta\left[
 x\bigl(1-e^{\beta v'(x)}\bigr)
 +(1-x)\bigl(1-e^{-\beta v'(x)}\bigr)
\right],
\qquad 0\le x\le1.
\end{equation}
The Gibbs formula \eqref{eq:gibbs-feedback} and the state equation \eqref{eq:state-equation} become
\begin{equation}\label{eq:numerical-two-state-feedback}
a^*_{12}(x)=e^{\beta v'(x)},\qquad
a^*_{21}(x)=e^{-\beta v'(x)},\qquad
\dot x=(1-x)a^*_{21}(x)-x a^*_{12}(x).
\end{equation}

We solved \eqref{eq:numerical-two-state-hjb} on a uniform grid with $401$ points by nonlinear finite-difference collocation.  Centered differences were used in the interior and second-order one-sided differences at the endpoints; no artificial Dirichlet values were imposed.  The maximum HJB residual evaluated afterward on a dense grid of $2001$ points was $3.47\times10^{-7}$.  As a refinement check, changing the HJB grid from $401$ to $801$ points changed the interpolated value function by at most $9.65\times10^{-9}$ on the $401$-point grid.  The computed value was convex numerically, with the minimum sampled second derivative equal to approximately $0.253$.

The feedback in \eqref{eq:numerical-two-state-feedback} was then evaluated from the numerical derivative of $v$ and used to integrate the optimal closed-loop equation from $x(0)=0.08$.  The trajectory converged to
\[
x^\infty\approx0.5663166.
\]
Independently minimizing the free energy \eqref{eq:free-energy} and solving $\dot x=0$ in \eqref{eq:numerical-two-state-feedback} gave equilibria differing by only $8.04\times10^{-9}$.  The computed inward-pointing limiting drifts were positive as
$x\downarrow0$ and negative as $x\uparrow1$.

\Cref{fig:control-derived-loop} displays the numerical HJB value, the resulting Gibbs rates, the optimal closed-loop trajectory, and the normalized free-energy gap.  Over the numerically resolved regime, the latter is fitted extremely well by an exponential with empirical rate $5.10$; the fitted rate is a diagnostic for this example and is not asserted to be the constant $\lambda$ in \eqref{eq:free-energy-decay}.
\begin{figure}[t]
\centering
\includegraphics[width=\textwidth]{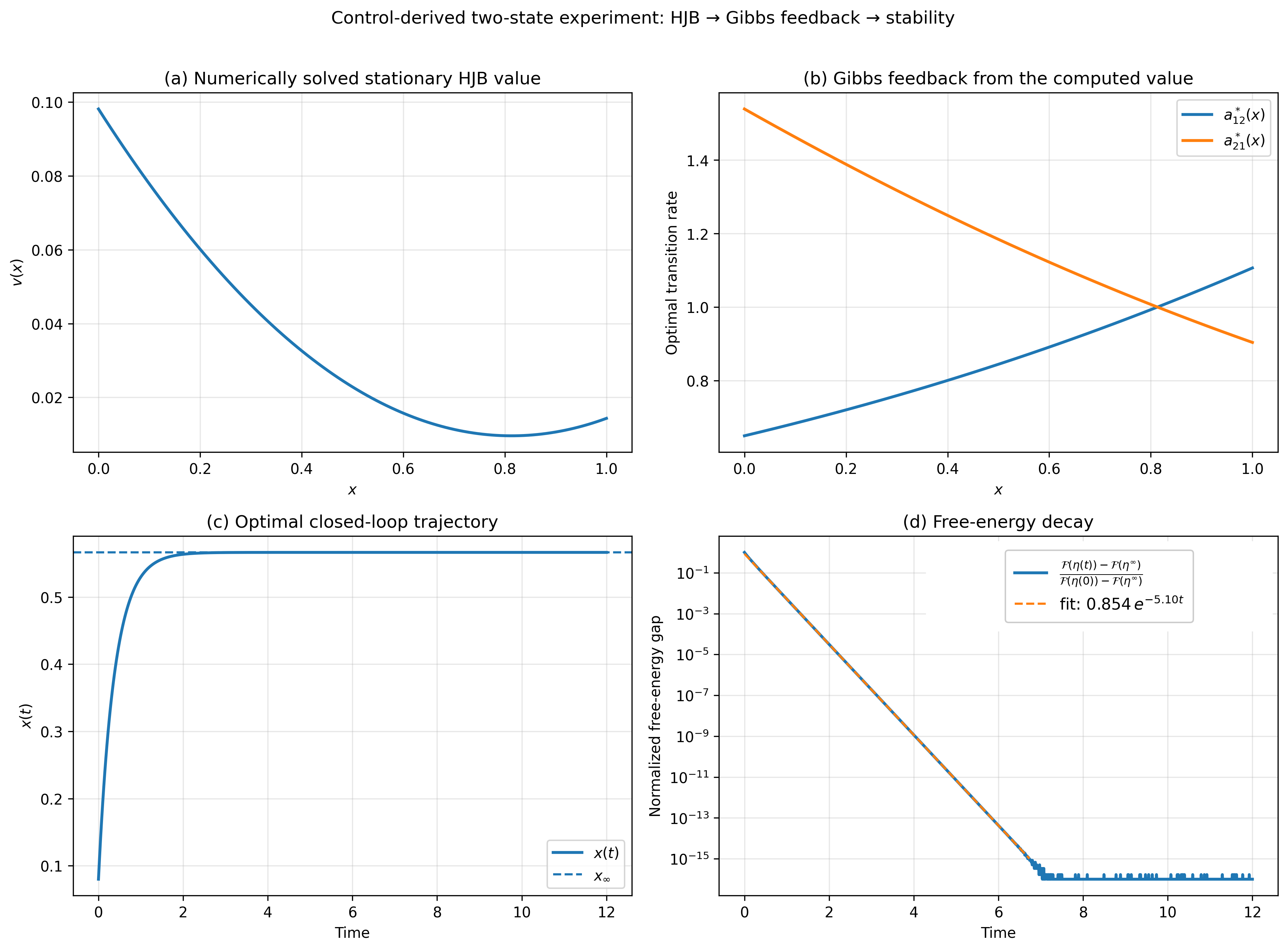}
\caption{Control-derived two-state experiment with $q_{12}=q_{21}=1$, $\rho=1$, $\beta=2$, $\kappa=1.5$, and $x_{\mathrm{tar}}=0.70$. \textbf{(a)} Numerical solution $v$ of the stationary HJB equation. \textbf{(b)} Gibbs rates obtained from the computed derivative $v'$. \textbf{(c)} Optimal closed-loop trajectory and its equilibrium. \textbf{(d)} Normalized free-energy gap on a logarithmic scale, together with an empirical exponential fit proportional to $e^{-5.10t}$.}
\label{fig:control-derived-loop}
\end{figure}

We also checked the control interpretation independently of the plotted convergence.  Along the computed feedback trajectory, the finite-horizon verification identity gives
\[
v(x_0)
=
\int_0^T e^{-\rho t}f\bigl(\eta(t),a^*(\eta(t))\bigr)\dd t
+e^{-\rho T}v(x(T)).
\]
For $T=12$, the HJB value was $v(x_0)=0.0817250593$, while the numerically integrated right-hand side was $0.0817250503$, a relative discrepancy of $1.10\times10^{-7}$.  Thus the value obtained from the HJB and the cost generated by its Gibbs feedback agree to numerical accuracy.

Finally, we evaluated both sides of the dissipation identity \eqref{eq:dissipation-identity} directly along this same control-derived trajectory.  The derivative was computed from the vector field rather than by finite differences in time.  As shown in \Cref{fig:control-derived-dissipation}, the curves for $-\frac{\dd}{\dd t}\mathcal F(\eta(t))$ and $\mathcal D(\eta(t))$ are visually indistinguishable; their relative discrepancy was $2.11\times10^{-16}$, at floating-point precision.  The largest observed increase of the sampled free energy was $2.22\times10^{-16}$.

\begin{figure}[t]
\centering
\includegraphics[width=0.82\textwidth]{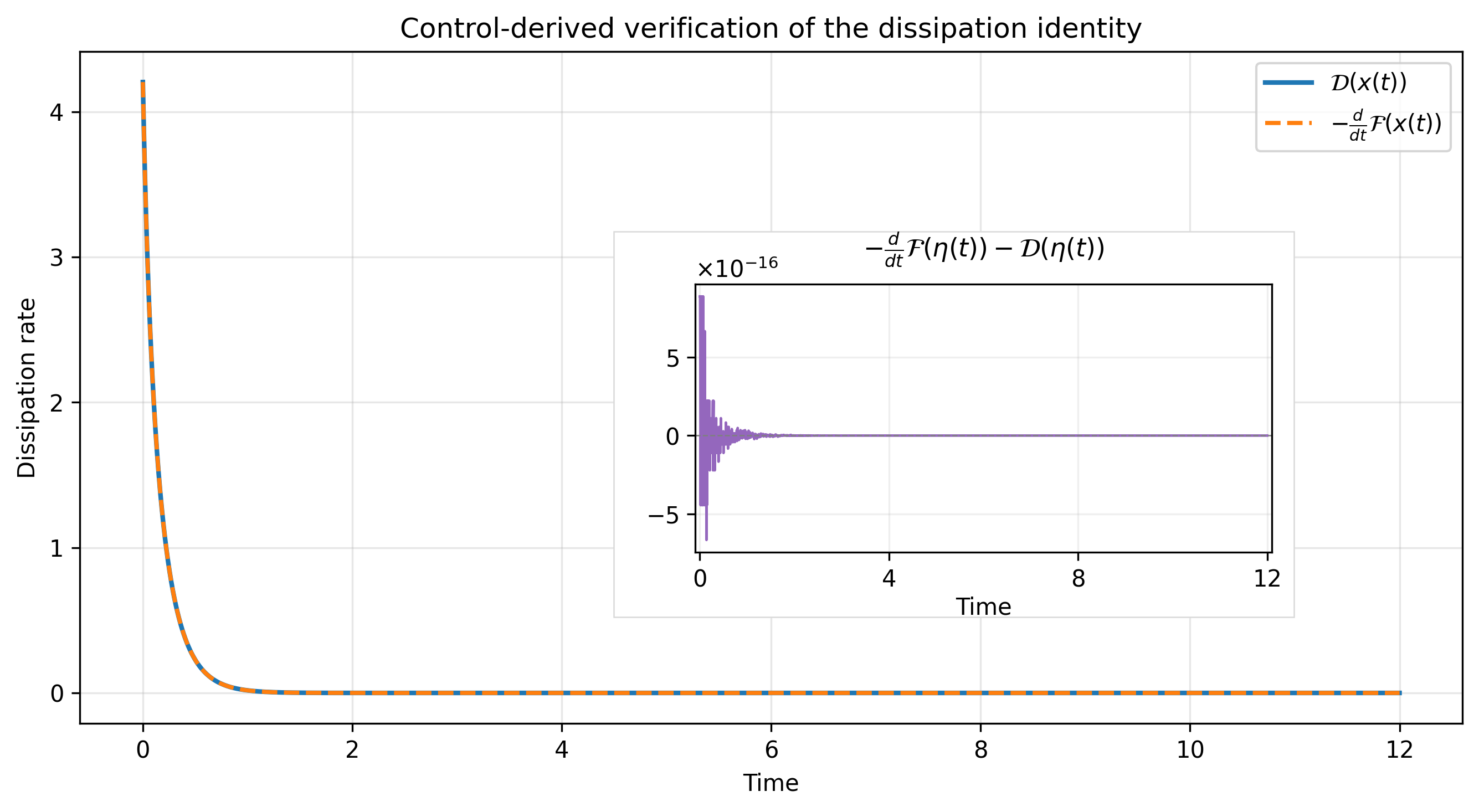}
\caption{Direct verification of the dissipation identity for the control-derived two-state experiment.  The dissipation $\mathcal D(\eta(t))$ and the independently evaluated quantity $-\frac{\dd}{\dd t}\mathcal F(\eta(t))$ overlap to floating-point precision; the inset shows their signed difference, $-\frac{\dd}{\dd t}\mathcal F(\eta(t))-\mathcal D(\eta(t))$.}
\label{fig:control-derived-dissipation}
\end{figure}

\subsection{Standalone Gibbs stability in three states}

We next test the stability mechanism of \Cref{thm:convergence} without requiring the potential to come from an HJB equation.  We take $d=3$, $\beta=2$, and prescribe the convex quadratic potential and symmetric reference rates
\begin{equation}\label{eq:numerical-three-state-data}
V(\eta)=\frac{\kappa}{2}\|\eta-\eta^{\mathrm{tar}}\|_2^2,
\qquad
\eta^{\mathrm{tar}}=(0.60,0.25,0.15),
\qquad
\kappa=1.5,
\qquad
q_{ij}=1\quad(i\ne j).
\end{equation}
For the data in \eqref{eq:numerical-three-state-data}, the Gibbs feedback is given by \eqref{eq:gibbs-feedback}, and the trajectory is obtained from \eqref{eq:state-equation}.  The free energy and dissipation are evaluated using \eqref{eq:free-energy} and \eqref{eq:dissipation}, respectively.  Numerical minimization of the free energy over the simplex gives
\[
\eta^\infty\approx(0.515334,0.271898,0.212768),
\qquad
\mathcal F(\eta^\infty)\approx-0.990243.
\]
The equilibrium does not coincide with $\eta^{\mathrm{tar}}$: the quadratic potential favors the target, whereas the entropy term favors a more diffuse distribution.

We integrated the closed-loop equation with an explicit eighth-order Runge--Kutta method (DOP853), using relative tolerance $10^{-10}$ and absolute tolerance $10^{-12}$.  The distinguished trajectory starts from $\eta(0)=(0.05,0.20,0.75)$.  To test convergence beyond a single trajectory, we also simulated twelve additional interior initial conditions drawn from the uniform Dirichlet distribution with fixed random seed $2026$.  All thirteen trajectories converged to the same interior equilibrium; the largest final Euclidean distance to $\eta^\infty$ was approximately $2.77\times10^{-9}$.

\Cref{fig:numerical-stability} summarizes the experiment.  The free-energy gap is nearly linear on a logarithmic scale after a short transient, and a least-squares fit over the numerically resolved regime gives an empirical decay proportional to $e^{-17.51t}$.  Panel (c) evaluates the two sides of \eqref{eq:dissipation-identity} directly from the vector field; their relative discrepancy is $2.12\times10^{-16}$.  Panel (d) shows the decay of the chemical-potential imbalance $\max_i\psi_i-\min_i\psi_i$, illustrating the approach to the zero-dissipation condition.

\begin{figure}[t]
\centering
\includegraphics[width=\textwidth]{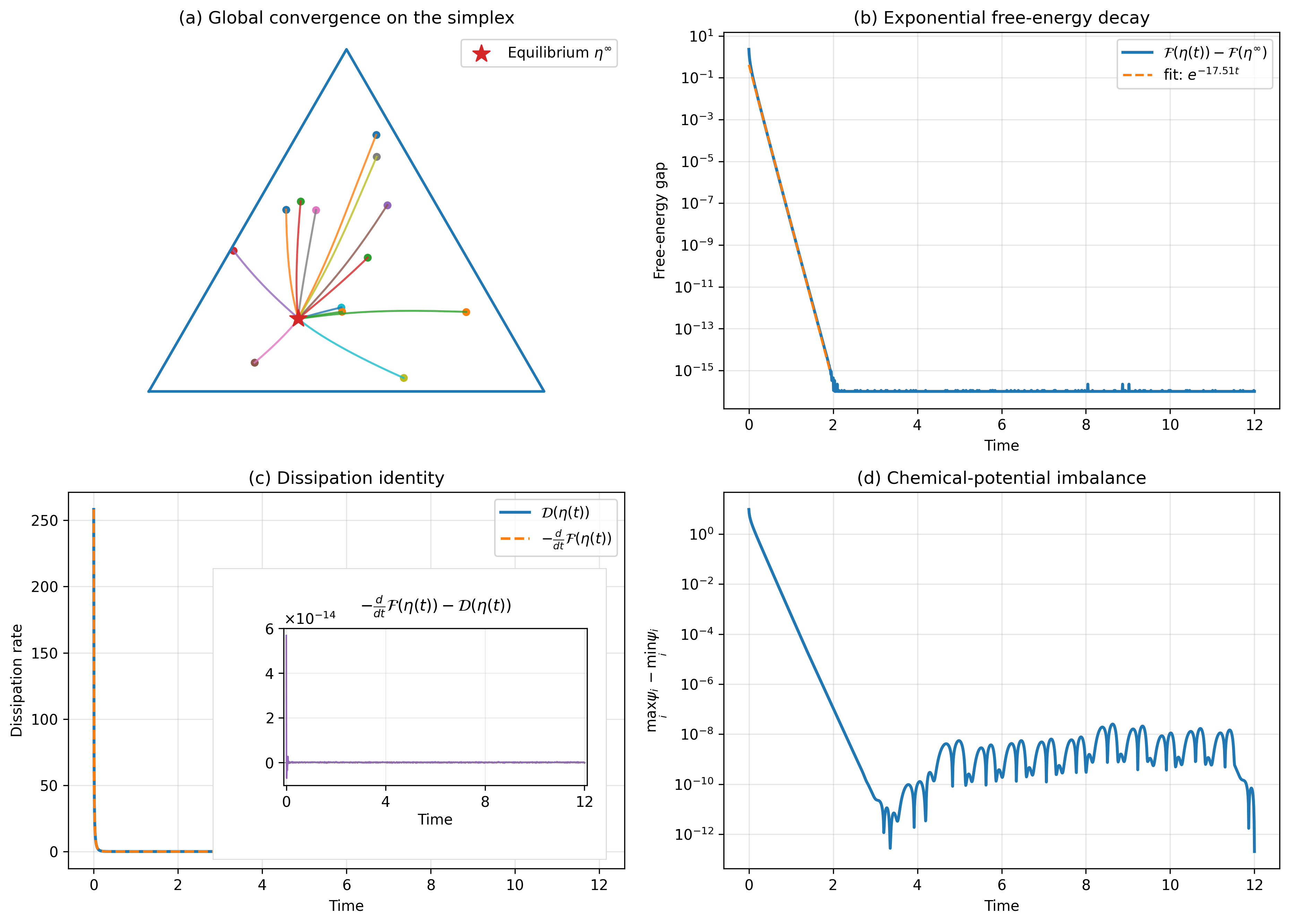}
\caption{Numerical stability of the three-state closed-loop Gibbs dynamics for the data in \eqref{eq:numerical-three-state-data}. \textbf{(a)} Trajectories from thirteen interior initial conditions on the probability simplex; all converge to the same equilibrium $\eta^\infty$. \textbf{(b)} Free-energy gap on a logarithmic scale, together with an empirical fit proportional to $e^{-17.51t}$. \textbf{(c)} Direct numerical verification of the dissipation identity \eqref{eq:dissipation-identity}, with an inset showing the signed difference between the negative free-energy derivative and the dissipation. \textbf{(d)} Decay of the chemical-potential imbalance $\max_i\psi_i-\min_i\psi_i$.}
\label{fig:numerical-stability}
\end{figure}

For a componentwise view of zero dissipation, set $\overline\psi(t)=\frac13\sum_{i=1}^3\psi_i(\eta(t))$. \Cref{fig:chemical-potential-equalization} plots $\psi_i(\eta(t))-\overline\psi(t)$ for $i=1,2,3$.  All three centered chemical potentials converge rapidly to zero, consistent with equality of the marginal free-energy values at the minimizer.

\begin{figure}[t]
\centering
\includegraphics[width=0.82\textwidth]{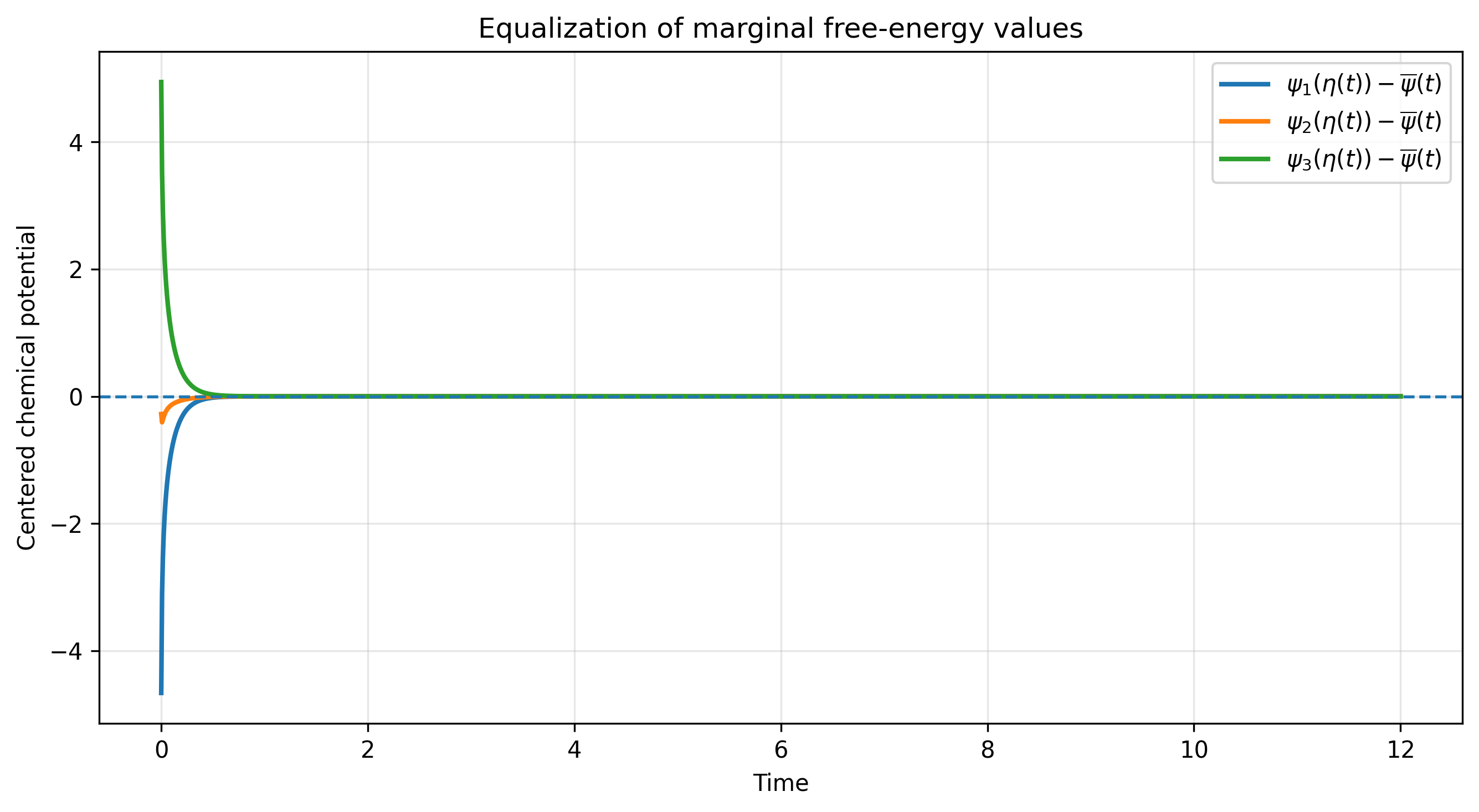}
\caption{Equalization of the marginal free-energy values along the distinguished three-state trajectory.  Each curve is a centered chemical potential $\psi_i(\eta(t))-\overline\psi(t)$; all three converge to zero.}
\label{fig:chemical-potential-equalization}
\end{figure}

\subsection{Comparison across dimensions}

This experiment concerns only the standalone stability result in \Cref{thm:convergence}; it is not derived from the control problem or the potential-MFG correspondence. We finally repeat the standalone stability experiment for $d\in\{3,5,10,20\}$, using five independently sampled interior initial conditions for each dimension.  To keep the total reference exit intensity fixed as $d$ varies, we set
\[
q_{ij}^{(d)}=\frac1{d-1},\qquad i\ne j,
\]
so that $\sum_{j\ne i}q_{ij}^{(d)}=1$ for every state.  For each dimension we use
\[
V_d(\eta)=\frac{\kappa}{2}\|\eta-\eta^{\mathrm{tar},d}\|_2^2,
\qquad
\eta_i^{\mathrm{tar},d}
=
\frac{\exp\!\left(-\frac{2(i-1)}{d-1}\right)}{\sum_{k=1}^d\exp\!\left(-\frac{2(k-1)}{d-1}\right)},
\]
with the same parameters $\beta=2$ and $\kappa=1.5$.  The five initial conditions in each dimension were sampled from the uniform Dirichlet distribution with fixed random seed $2026$.  Trajectories were integrated on $0\le t\le3$ by DOP853 with relative tolerance $10^{-9}$ and absolute tolerance $10^{-11}$.

To compare shapes of the decay curves rather than their dimension-dependent initial magnitudes, \Cref{fig:dimension-comparison} uses the normalized free-energy gap
\[
\widehat G_d(t)
=
\frac{\mathcal F_d(\eta(t))-\mathcal F_d(\eta_d^\infty)}{\mathcal F_d(\eta(0))-\mathcal F_d(\eta_d^\infty)}
\]
and the normalized chemical-potential imbalance $\widehat\Psi_d(t)=\Psi_d(\eta(t))/\Psi_d(\eta(0))$, where $\Psi_d(\eta)=\max_i\psi_i(\eta)-\min_i\psi_i(\eta)$.  For every fixed dimension the five curves remain tightly grouped and show approximately exponential decay after a short transient.  Least-squares fits over $10^{-8}<\widehat G_d(t)<10^{-2}$ gave mean empirical decay rates
\[
8.85,\qquad 5.47,\qquad 3.57,\qquad 2.71
\]
for $d=3,5,10,20$, respectively, with corresponding standard deviations $0.29$, $0.03$, $0.06$, and $0.02$.  The observed slowdown with dimension is compatible with \Cref{thm:convergence}, whose exponential constant may depend on the model data, including $d$; the experiment does not claim a dimension-uniform rate.

\begin{figure}[t]
\centering
\includegraphics[width=\textwidth]{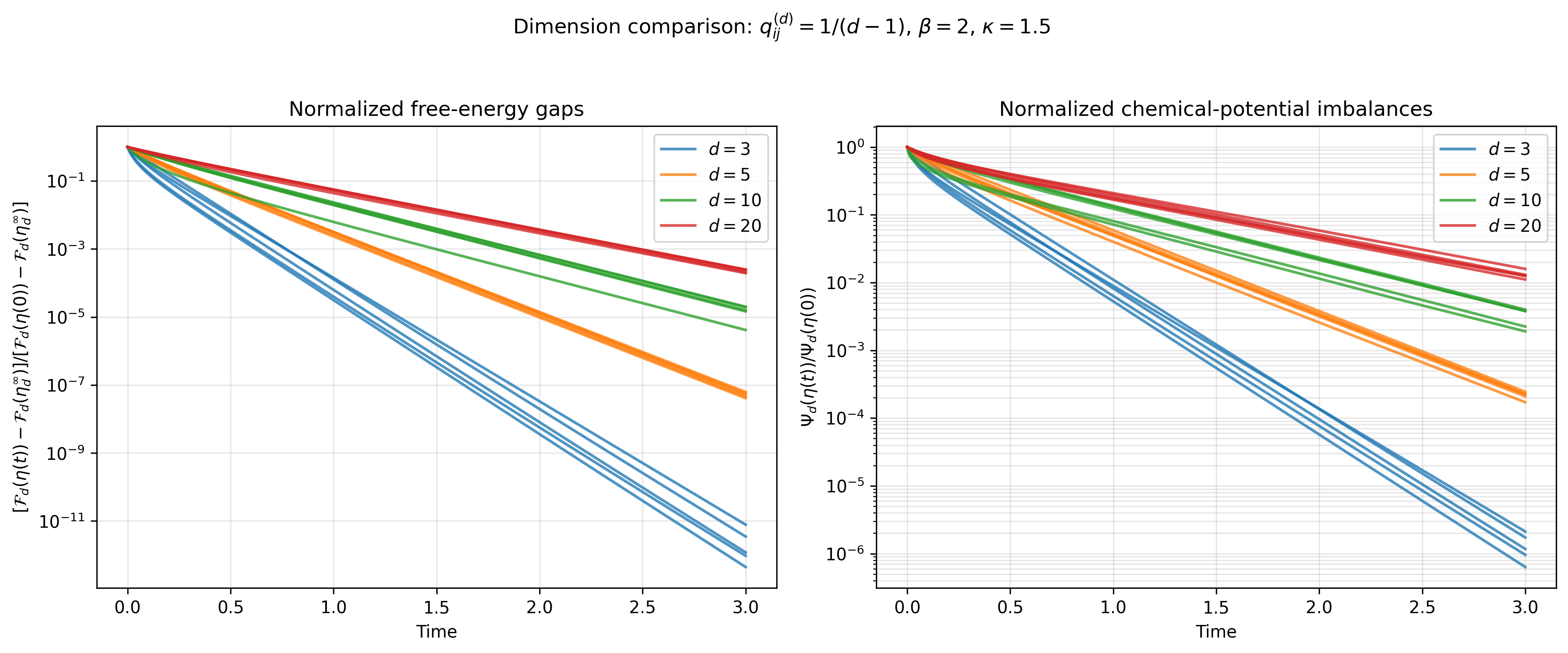}
\caption{Comparison of the closed-loop Gibbs dynamics for $d\in\{3,5,10,20\}$, with five interior initial conditions for each dimension and normalized symmetric rates $q_{ij}^{(d)}=1/(d-1)$. \textbf{(a)} Normalized free-energy gaps $\widehat G_d(t)$. \textbf{(b)} Normalized chemical-potential imbalances $\widehat\Psi_d(t)$.  Curves of the same color correspond to different initial conditions in the same dimension.}
\label{fig:dimension-comparison}
\end{figure}

{\color{black}
The preceding experiments concern a fixed temperature. We now keep
the cost and reference generator fixed while varying $\beta$, in
order to examine the temperature results and their relaxed assumptions.

\subsection{Temperature limits with fixed, state-dependent reference rates}

We retain the two-state running cost, discount, and target from the
first experiment, and compare the constant reference $q_{12}=q_{21}=1$
with \eqref{eq:temperature-state-dependent-rates} at $q_0=1$ and
$\sigma=0.6$. The latter satisfies \Cref{ass:regularity} with
$K_\beta=0.3/\beta$ for every $\beta>0$ by
\Cref{ex:temperature-state-dependent}. Hence $\beta K_\beta=0.3$
does not vanish, while the strong-convexity margin is $0.4$.
The models therefore test both the original constant-reference case
and the genuinely state-dependent extension.

For each model, we solve the stationary HJB at nineteen temperatures
from $0.002$ to $512$: the eight values $0.002\cdot2^k$ for
$k=0,\ldots,7$ and the eleven values $0.5\cdot2^k$ for
$k=0,\ldots,10$.
The state-dependent equation is obtained from
\eqref{eq:numerical-two-state-hjb} by multiplying the entire bracket
by $q(x)$; each rate in \eqref{eq:numerical-two-state-feedback}
is multiplied by the same factor. We use $1601$ grid points,
centered differences in the interior, second-order one-sided endpoint
derivatives, and damped Newton iteration with continuation in $\beta$.
The HJB equation is imposed at the endpoints without artificial
Dirichlet data. Cubic-spline interpolation of the computed values
supplies the derivatives used in the feedback and trajectory solver.
The reference value is obtained independently from its linear
discounted equation and checked by trajectory quadrature.

Across both models and all temperatures, the largest collocation
residual is $2.91\times10^{-12}$, and the largest HJB residual on
a separate $10001$-point grid is $4.83\times10^{-7}$.
Repeating the calculation with $3201$ points changes the interpolated
values by at most $6.89\times10^{-9}$ and the rates by at most
$1.87\times10^{-4}$ over the full temperature sweep; for
$\beta\le0.5$, the largest rate change is $3.87\times10^{-8}$.
An independent integration of the running cost, including the
discounted terminal value, agrees with the HJB value to within
$8.47\times10^{-9}$ at fifteen state--temperature pairs per model.
These checks assess numerical resolution; sampled extrema are not
certified suprema on the continuum.

\Cref{fig:temperature-values} examines \Cref{thm:beta-limits}.
The values decrease with $\beta$ at every HJB grid point. At small
$\beta$, they approach the reference value; at large $\beta$, they
approach $C_{\min}/\rho=0$. The two reference models have similar
numerical behavior, so we display them in separate columns with
identical axes. Within each column, the profiles use the well-separated
inverse temperatures $\beta\in\{0.002,2,32,512\}$, and the error
panel shows both limiting comparisons over the entire sweep.
Departure from the reference value increases as the error relative
to the minimum-cost value decreases. The values remain bounded;
departure here does not mean divergence to infinity.
Throughout these experiments, $\beta$ is inverse temperature, so
$\beta\downarrow0$ is the high-temperature regime.
\begin{figure}[t]
\centering
\includegraphics[width=\textwidth]{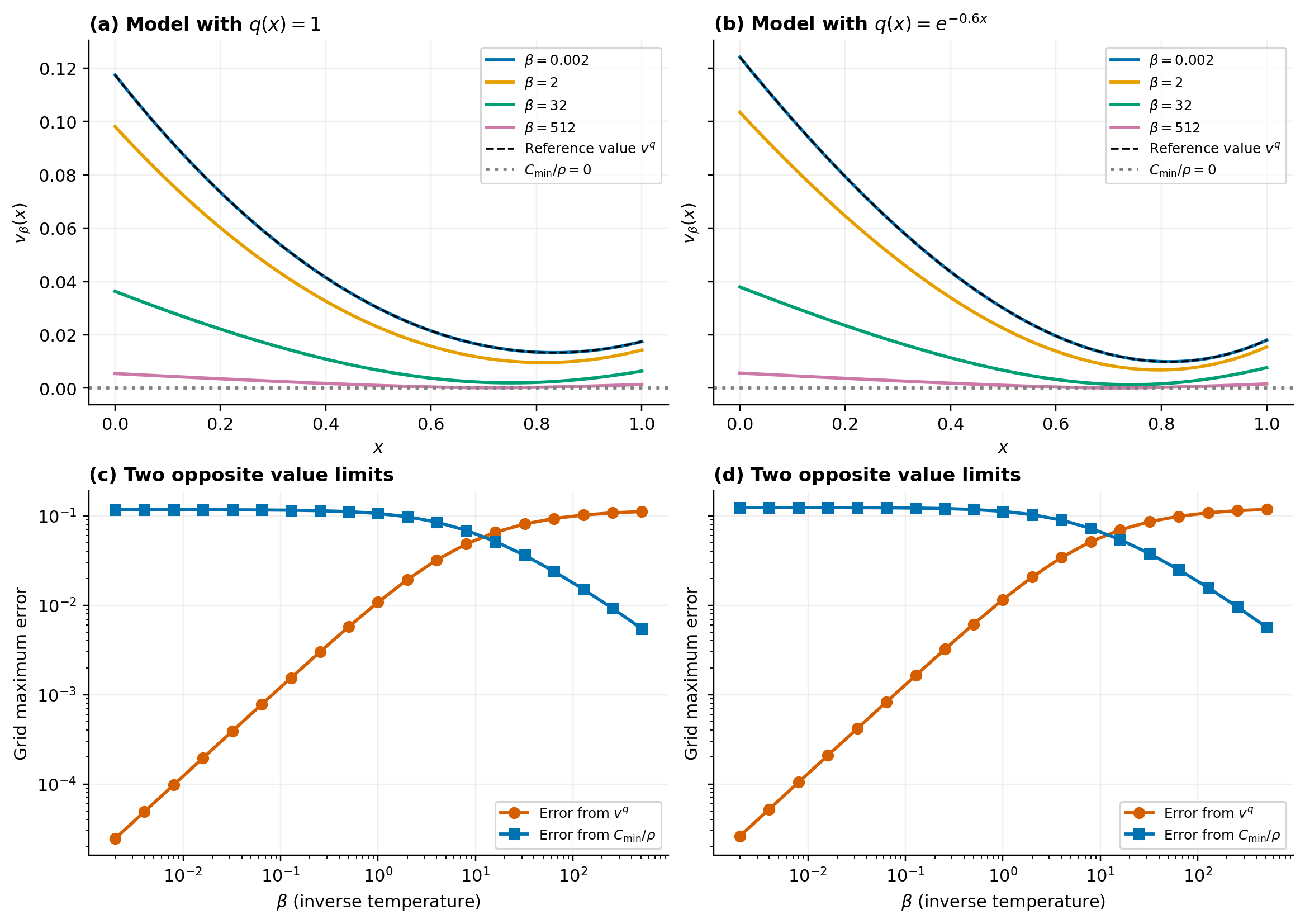}
\color{black}
\caption{\color{black} Value limits, with the constant-reference model on the left
and the state-dependent model on the right.
\textbf{(a)--(b)} Value profiles at four widely separated inverse temperatures,
compared with both limiting values.
\textbf{(c)--(d)} Grid maximum errors relative to the reference value
and to the minimum running cost divided by the discount.}
\label{fig:temperature-values}
\end{figure}

\Cref{fig:temperature-feedback} tests
\Cref{thm:uniform-small-beta-feedback}. The feedback error decreases
uniformly over the sampled states and both directed edges, including
the endpoints. A log--log fit over the five smallest inverse temperatures
has slope approximately $1.001$ for both models. The observed
first-order behavior is faster than the general $O(\sqrt\beta)$
guarantee for bounded $\beta K_\beta$. That guarantee is an upper
bound, not a prediction of a sharp exponent for every state-dependent
model. Neither example demonstrates sharpness of the square-root rate.
The insets show the empirical errors alongside vertically offset slope
guides, which are not error bounds.

The rate profiles and full error sweep also show the opposite behavior
at large $\beta$: feedback departs increasingly from the fixed reference
rates. For these particular models this growth has a direct explanation.
At $x=0$, the HJB gives
$a^\beta_{21}(0)=1+\beta[C(0)-\rho v_\beta(0)]$.
Since \Cref{thm:beta-limits} gives $v_\beta(0)\to0$ and
$C(0)=0.3675$, we obtain $a^\beta_{21}(0)/\beta\to0.3675$.
Thus this endpoint rate diverges linearly as $\beta\to\infty$,
although the values remain bounded. The dashed large-$\beta$ guide
in the error panels shows this endpoint scale. This example-specific
divergence is not a claim of the small-$\beta$ feedback theorem.
\begin{figure}[t]
\centering
\includegraphics[width=\textwidth]{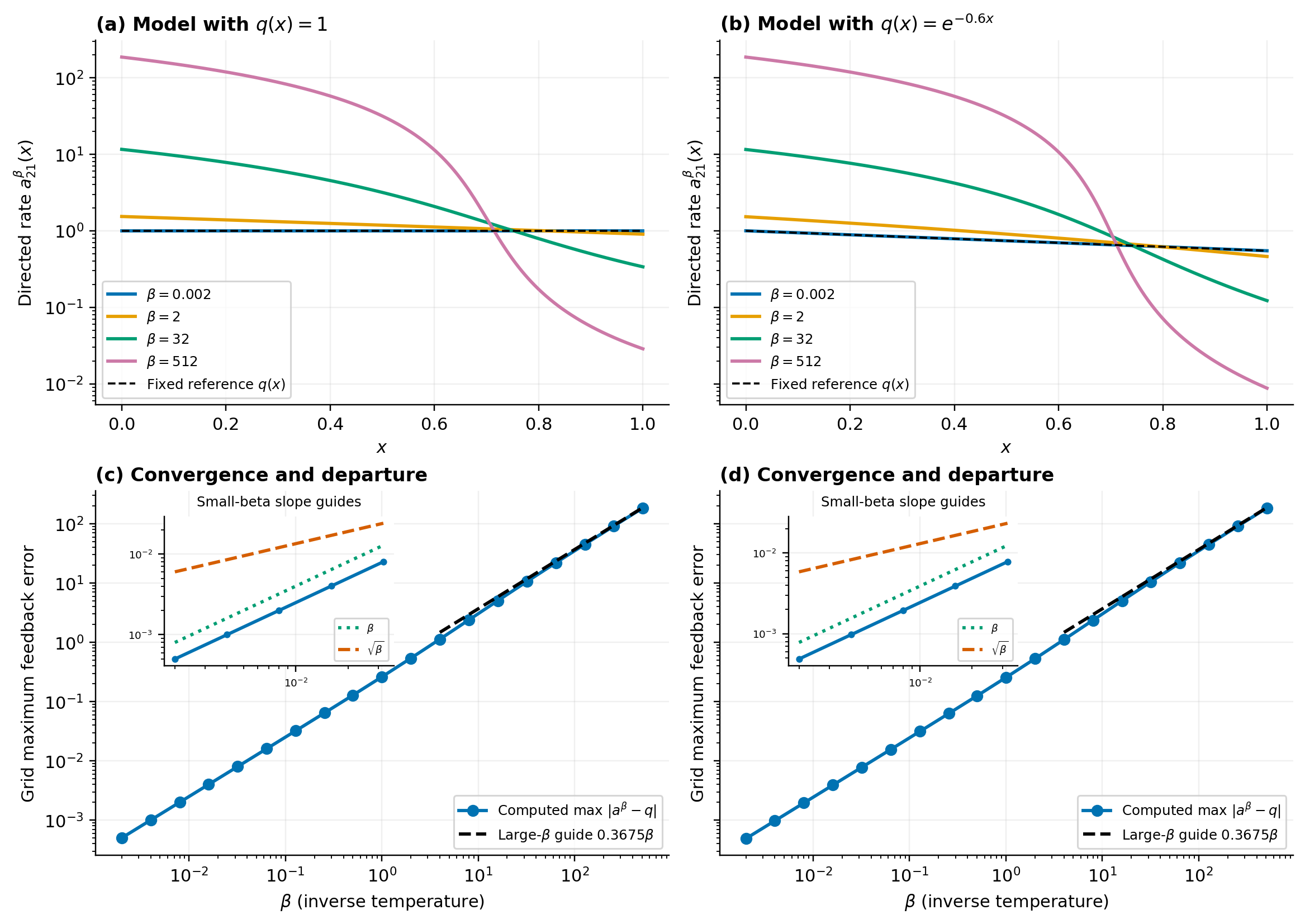}
\color{black}
\caption{\color{black}Gibbs feedback, with one reference model per column.
\textbf{(a)--(b)} The directed rate $a^\beta_{21}(x)$ at four widely
separated inverse temperatures, on a logarithmic rate axis; the fixed
reference rate is dashed.
\textbf{(c)--(d)} Grid maximum feedback error over both directed edges
and the full temperature sweep, with the linear endpoint-growth scale.
Insets isolate the small-$\beta$ regime and its slope comparisons.}
\label{fig:temperature-feedback}
\end{figure}

For \Cref{cor:small-beta-trajectories}, we use the seven initial states
$x_0\in\{0.001,0.08,0.3,0.5,0.8,0.95,0.999\}$.
\Cref{fig:temperature-trajectories} reports the maximum trajectory
error over these starts and sampled times up to $T=4$, together with
discounted state and rate errors integrated to $T=20$ over the full
nineteen-temperature sweep. We supplement the uniform time grid with
geometrically spaced points near zero to resolve the rapid motion at
large $\beta$. Doubling this time sampling at $\beta=512$ changes
the maximum discounted errors by less than $10^{-5}$. The trajectory
panels use the same four inverse temperatures as the value and rate
profiles, with a time axis that is linear below $0.01$ and logarithmic
above it.
The omitted state-error tail is at most $2e^{-20}<4.13\times10^{-9}$.
For the rate-error tail, the global feedback-error bound from
\Cref{thm:uniform-small-beta-feedback}, together with the range of
$q$, gives the bound $2e^{-20}(E_\beta+1-e^{-\sigma})$, where
$E_\beta$ denotes the true uniform feedback error. The proof of that theorem gives
$E_\beta\le q_{\max}(e^{A_\beta}-1)$ with its explicit
scaled-gradient bound $A_\beta$. Using the model parameters yields
a rate-error tail below $1.96\times10^{-8}$ throughout the
small-$\beta$ sweep. Across the full sweep, substituting the grid
maximum feedback error in the same tail expression gives a numerical
tail diagnostic below $7.66\times10^{-7}$; this latter quantity is
not a certified continuum bound. All three plotted errors decrease as
$\beta\downarrow0$. No uniform-in-time trajectory convergence is inferred.
\begin{figure}[t]
\centering
\includegraphics[width=\textwidth]{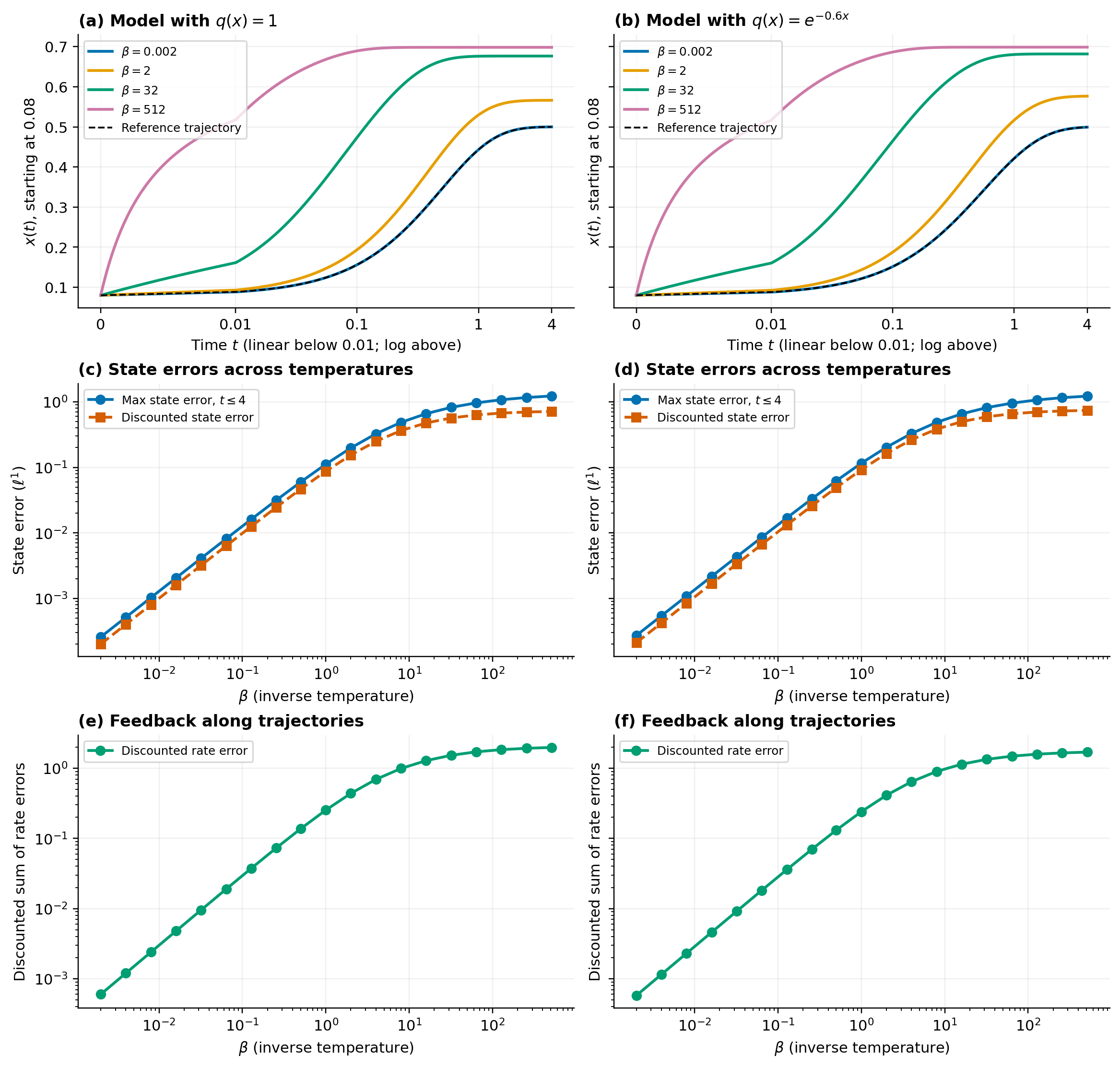}
\color{black}
\caption{\color{black}Trajectory and discounted-feedback limits, with one
reference model per column.
\textbf{(a)--(b)} Trajectories from $x_0=0.08$ at four widely separated
inverse temperatures and the corresponding reference trajectory.
\textbf{(c)--(d)} Maximum sampled state errors up to $T=4$ and
discounted state errors integrated to $T=20$.
\textbf{(e)--(f)} Discounted rate errors up to $T=20$, comparing Gibbs
feedback on its trajectory with reference rates on the reference trajectory.
Error maxima use seven prescribed initial states.}
\label{fig:temperature-trajectories}
\end{figure}

\Cref{fig:temperature-equilibria} tests both parts of
\Cref{cor:beta-stability-limits}. The equilibrium coordinate approaches
$1/2$ for small $\beta$ and the cost minimizer $0.70$ for large
$\beta$. For the state-dependent reference it equals approximately
$0.5001208$ at $\beta=0.002$ and $0.6987528$ at $\beta=512$.
The small-$\beta$ distance and large-$\beta$ equilibrium cost remain
below their respective analytic bounds. Here $M=0.3675$ and $Q=1$
for both models, so the bounds are directly comparable.
\begin{figure}[t]
\centering
\includegraphics[width=\textwidth]{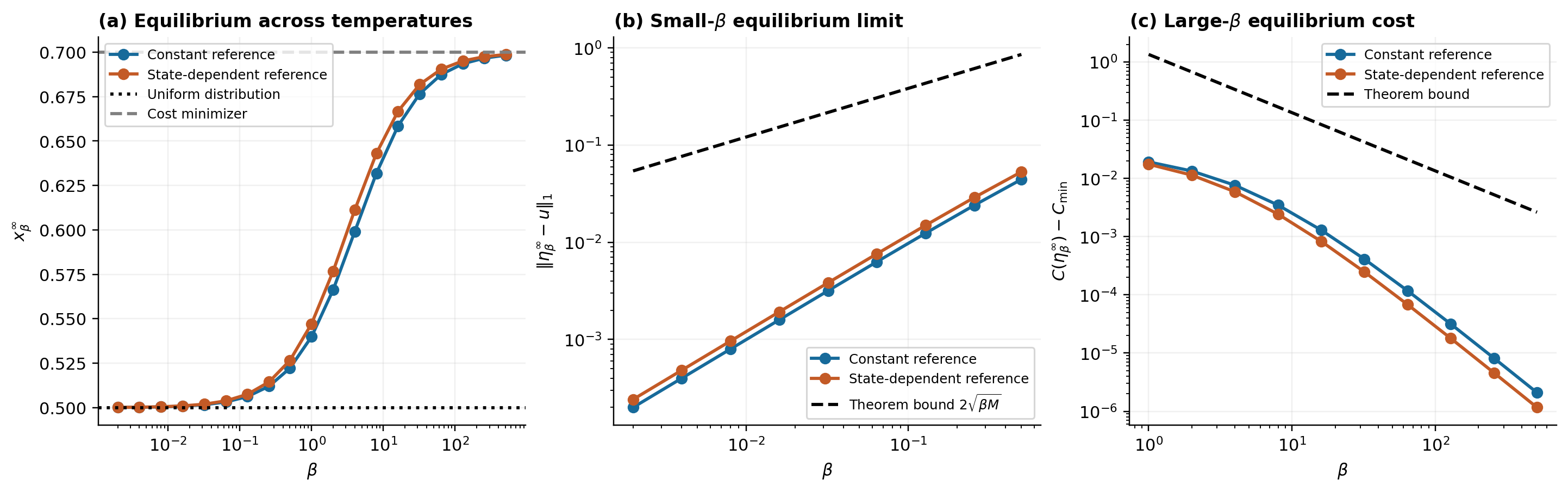}
\color{black}
\caption{\color{black}\textcolor{black}{Extreme-temperature limits of the stable equilibria.
\textbf{(a)} Equilibrium coordinate over the full temperature sweep.
\textbf{(b)} Distance from the uniform distribution and its theorem
bound. \textbf{(c)} Equilibrium running-cost excess and its theorem
bound. These are equilibrium conclusions; no uniform large-$\beta$
decay rate is asserted.}}
\label{fig:temperature-equilibria}
\end{figure}

The accompanying script also retains numerical checks of the common
small-$\beta$ stability envelope and the dissipation identity for the
state-dependent model, without a separate figure.

} 
\color{black}

\section{Viscosity well-posedness}\label{app:viscosity}

\subsection{Viscosity characterization on the closed simplex}

We first establish boundedness of the value function and the dynamic
programming principle. We then prove the viscosity inequalities for
the upper and lower semicontinuous envelopes of the value function on
the closed simplex. Finally, Kraaij's comparison theorem identifies
the two envelopes, yielding continuity and uniqueness.

Since the entropy term is nonnegative and the zero-rate control is
admissible,
\[
-\frac{\|C\|_\infty}{\rho}
\le V(\eta)
\le
\frac1\rho
\left(
\|C\|_\infty+\frac{(d-1)q_{\max}}{\beta}
\right),
\qquad \eta\in\D.
\]
In particular, \Cref{ass:viscosity} implies that $V$ is finite and bounded.

\begin{lemma}[Dynamic programming principle]\label{lem:dpp}
For every $h>0$ and $\eta_0\in\D$,
\begin{equation}\label{eq:dpp}
V(\eta_0)=\inf_{a\in\mathcal A(\eta_0)}\left\{\int_0^h e^{-\rho t}f(\eta(t),a(t))\dd t+e^{-\rho h}V(\eta(h))\right\}.
\end{equation}
\end{lemma}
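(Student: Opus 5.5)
We prove the two inequalities in \eqref{eq:dpp} separately. Both rest on two structural properties of the admissible class $\mathcal A(\cdot)$ from \Cref{def:admissible}. The first is \emph{time translation}: if $a\in\mathcal A(\eta_0)$ and $h>0$, then $a^h(\cdot):=a(h+\cdot)$ is locally integrable and hence lies in $\mathcal A(\eta(h))$. The second is \emph{concatenation}: if $a\in\mathcal A(\eta_0)$ and $b\in\mathcal A(\eta(h))$, then the map $c:=a$ on $[0,h)$ and $c:=b(\cdot-h)$ on $[h,\infty)$ is measurable and locally integrable, so $c\in\mathcal A(\eta_0)$. By uniqueness of solutions of the linear equation \eqref{eq:state-equation}, equivalently the evolution property $P^c(0,t)=P^a(0,h)P^b(0,t-h)$ of \eqref{eq:evolution-family}, the trajectory of $c$ coincides with $\eta^a$ on $[0,h]$ and with $\eta^b(\cdot-h)$ started from $\eta^a(h)$ afterwards. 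Since $f$ depends only on the current pair $(\eta(t),a(t))$, the shifted cost is exactly the cost of the shifted control from the shifted state.

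\emph{Inequality ``$\ge$''.} Fix $a\in\mathcal A(\eta_0)$. The running cost satisfies $f\ge -\|C\|_\infty$, so $J(\eta_0;a)$ is well defined in $(-\infty,\infty]$. We may split the integral at $h$, using that integrals of functions bounded below are additive in $(-\infty,\infty]$, and substitute $t=h+s$ in the tail. This gives
\[
J(\eta_0;a)=\int_0^h e^{-\rho t}f(\eta(t),a(t))\dd t+e^{-\rho h}J(\eta(h);a^h)\ge \int_0^h e^{-\rho t}f\dd t+e^{-\rho h}V(\eta(h)),
\]
where the last step uses time translation, $a^h\in\mathcal A(\eta(h))$. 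Taking the infimum over $a$ yields $V(\eta_0)\ge$ the right-hand side of \eqref{eq:dpp}.

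\emph{Inequality ``$\le$''.} Fix $a\in\mathcal A(\eta_0)$ and $\varepsilon>0$. If the first integral is $+\infty$, there is nothing to prove. Otherwise, since $V$ is finite and bounded (as shown just above), we can choose $b\in\mathcal A(\eta(h))$ with $J(\eta(h);b)\le V(\eta(h))+\varepsilon$. Let $c$ be the concatenation of $a$ and $b$. The splicing identity then gives
\[
V(\eta_0)\le J(\eta_0;c)=\int_0^h e^{-\rho t}f(\eta(t),a(t))\dd t+e^{-\rho h}J(\eta(h);b)\le \int_0^h e^{-\rho t}f\dd t+e^{-\rho h}V(\eta(h))+\varepsilon.
\]
Letting $\varepsilon\downarrow0$ and then taking the infimum over $a$ gives the reverse inequality.

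The only delicate step is the choice of $b$ for the specific endpoint $\eta(h)$. This requires no measurable selection, because $a$ is a deterministic open-loop control, so $\eta(h)$ is a single fixed point of $\D$. The remaining bookkeeping consists of the extended-real splitting of the cost, which is justified by the lower bound on $f$, and the identification of the spliced trajectory, which follows from uniqueness for the linear ODE. Neither uses any property of $q$ beyond \Cref{ass:viscosity}; finiteness of $V$ uses the upper bound $q_{\max}$ through the zero-rate control.
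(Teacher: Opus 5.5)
Your proof is correct and follows essentially the same route as the paper: it uses the same time-translation and concatenation closure properties of $\mathcal A(\cdot)$, the same extended-real splitting of the cost (justified by $f\ge-\|C\|_\infty$) for the ``$\ge$'' direction, and an $\varepsilon$-optimal continuation spliced at time $h$ for the ``$\le$'' direction. The only cosmetic difference is that you fix an arbitrary $a$ and take the infimum at the end, whereas the paper first picks an $\varepsilon$-optimal control for the right-hand infimum (and so must first note that this infimum is finite).
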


\begin{proof}
Fix $h>0$. We first record the two closure properties of the admissible
class that will be used below. If $a\in\mathcal A(\eta_0)$, define its
time-shifted tail by
\[
a^{h}(s):=a(h+s),\qquad s\geq0.
\]
Since $a$ is measurable and locally integrable, so is $a^{h}$. Indeed,
for every $T<\infty$,
\[
\int_0^T\sum_{i\neq j}a^h_{ij}(s)\,ds
=
\int_h^{h+T}\sum_{i\neq j}a_{ij}(t)\,dt
<\infty.
\]
If $\eta(\cdot)$ is the trajectory generated by $a$ from $\eta_0$, then
$s\mapsto\eta(h+s)$ is the trajectory generated by $a^h$ from
$\eta(h)$. Hence
\[
a^h\in\mathcal A(\eta(h)).
\]

Similarly, if $a\in\mathcal A(\eta_0)$ and
$\widehat a\in\mathcal A(\eta(h))$, define their concatenation at time
$h$ by
\[
(a\oplus_h\widehat a)(t)
:=
\begin{cases}
a(t), & 0\leq t<h,\\
\widehat a(t-h), & t\geq h.
\end{cases}
\]
This control is measurable and generator-valued. Moreover, for every
$T<\infty$,
\[
\int_0^T\sum_{i\neq j}(a\oplus_h\widehat a)_{ij}(t)\,dt
\leq
\int_0^{\min\{T,h\}}\sum_{i\neq j}a_{ij}(t)\,dt
+
\mathbf 1_{\{T>h\}}
\int_0^{T-h}\sum_{i\neq j}\widehat a_{ij}(s)\,ds
<\infty.
\]
By uniqueness of the linear state equation, its trajectory agrees with
the trajectory generated by $a$ on $[0,h]$ and, after time $h$, with
the trajectory generated by $\widehat a$ from the common state
$\eta(h)$. Thus
$a\oplus_h\widehat a\in\mathcal A(\eta_0)$.

We now prove \eqref{eq:dpp}. Let $a\in\mathcal A(\eta_0)$ and denote
its trajectory by $\eta(\cdot)$. Since $f\ge-\|C\|_\infty$, every discounted cost has finite negative
part, so the following splitting of the extended-real-valued integral
is legitimate. Splitting its cost at time $h$ and
using the change of variables $t=h+s$ in the tail gives
\[
J(\eta_0;a)
=
\int_0^h e^{-\rho t}
f(\eta(t),a(t))\,dt
+
e^{-\rho h}
J(\eta(h);a^h).
\]
Since $a^h\in\mathcal A(\eta(h))$, the definition of the value function
implies
\[
J(\eta(h);a^h)\geq V(\eta(h)).
\]
Therefore
\[
J(\eta_0;a)
\geq
\int_0^h e^{-\rho t}
f(\eta(t),a(t))\,dt
+
e^{-\rho h}V(\eta(h)).
\]
Taking the infimum over $a\in\mathcal A(\eta_0)$ yields
\[
V(\eta_0)
\geq
\inf_{a\in\mathcal A(\eta_0)}
\left\{
\int_0^h e^{-\rho t}
f(\eta(t),a(t))\,dt
+
e^{-\rho h}V(\eta(h))
\right\}.
\]

For the reverse inequality, denote the infimum on the right-hand side
by $R_h(\eta_0)$. The value function is finite: it is bounded below
because the entropy term is nonnegative and $C$ is bounded, while the
zero-rate control is admissible and has finite cost. Hence
$R_h(\eta_0)<\infty$. Fix $\varepsilon>0$ and choose
$a^\varepsilon\in\mathcal A(\eta_0)$, with trajectory
$\eta^\varepsilon$, such that
\[
\int_0^h e^{-\rho t}
f(\eta^\varepsilon(t),a^\varepsilon(t))\,dt
+
e^{-\rho h}V(\eta^\varepsilon(h))
\leq
R_h(\eta_0)+\varepsilon.
\]
By the definition of $V(\eta^\varepsilon(h))$, there exists
$\widehat a^\varepsilon\in
\mathcal A(\eta^\varepsilon(h))$ such that
\[
J(\eta^\varepsilon(h);\widehat a^\varepsilon)
\leq
V(\eta^\varepsilon(h))+\varepsilon.
\]
Concatenate the two controls:
\[
\widetilde a^\varepsilon
:=
a^\varepsilon\oplus_h\widehat a^\varepsilon.
\]
By the closure property proved above,
$\widetilde a^\varepsilon\in\mathcal A(\eta_0)$. Consequently,
\[
\begin{aligned}
V(\eta_0)
&\leq
J(\eta_0;\widetilde a^\varepsilon)\\
&=
\int_0^h e^{-\rho t}
f(\eta^\varepsilon(t),a^\varepsilon(t))\,dt
+
e^{-\rho h}
J(\eta^\varepsilon(h);\widehat a^\varepsilon)\\
&\leq
\int_0^h e^{-\rho t}
f(\eta^\varepsilon(t),a^\varepsilon(t))\,dt
+
e^{-\rho h}V(\eta^\varepsilon(h))
+
e^{-\rho h}\varepsilon\\
&\leq
R_h(\eta_0)+(1+e^{-\rho h})\varepsilon.
\end{aligned}
\]
Letting $\varepsilon\downarrow0$ gives
$V(\eta_0)\leq R_h(\eta_0)$. Together with the opposite inequality,
this proves \eqref{eq:dpp}.

Notice that no optimal control is required. We use only
$\varepsilon$-optimal controls at two fixed states, so no measurable
selection argument is involved.
\end{proof}

\subsection{Proof of \Cref{thm:viscosity}\textcolor{black}{: Closed-simplex viscosity characterization}}

Continuity of $V$ is not known at this stage, so we first work with its
semicontinuous envelopes
\[
V^*(\eta):=\limsup_{\zeta\to\eta}V(\zeta),
\qquad
V_*(\eta):=\liminf_{\zeta\to\eta}V(\zeta).
\]
Step~1 proves that $V^*$ is a subsolution and Step~2 proves that
$V_*$ is a supersolution, both on the closed simplex. Step~3 applies
comparison to obtain $V^*\le V_*$. Since the reverse inequality
always holds, the envelopes coincide with $V$, proving continuity,
comparison, and uniqueness.

\medskip
\noindent\textbf{Step 1: Subsolution property of $V^*$.} Let $V^*-\phi$ have a local maximum at $\eta_0\in\D$ relative to $\D$, and set $m=V^*(\eta_0)-\phi(\eta_0)$. Choose $\eta_n\to\eta_0$ with $V(\eta_n)\to V^*(\eta_0)$ and put $\delta_n=m-(V(\eta_n)-\phi(\eta_n))$. Then $\delta_n\ge0$ and $\delta_n\to0$. Choose $h_n\downarrow0$ with $\delta_n/h_n\to0$. Let $\bar a$ be a finite rate matrix attaining the pointwise infimum defining $H(\eta_0,D\phi(\eta_0))$; it exists by the scalar entropy minimizations underlying \eqref{eq:hamiltonian}. Use the constant control $\bar a$ in \eqref{eq:dpp}. Its trajectories stay in $\D$ and remain in the touching neighborhood for $n$ large. The maximality of $V^*-\phi$ gives
\[
V(\eta_n(h_n))-V(\eta_n)\le \phi(\eta_n(h_n))-\phi(\eta_n)+\delta_n.
\]
Substitution into \eqref{eq:dpp}, division by $h_n$, and passage to the limit give
\[
\rho V^*(\eta_0)\le f(\eta_0,\bar a)+D\phi(\eta_0)\cdot \eta_0\bar a=H(\eta_0,D\phi(\eta_0)).
\]
Thus $V^*$ is a subsolution even at boundary points.

\medskip
\noindent\textbf{Step 2: Supersolution property of $V_*$.}
We next prove that $V_*$ is a
viscosity supersolution on $\D$. Let $V_*-\phi$ have a local minimum at $\eta_0\in\D$, set $m=V_*(\eta_0)-\phi(\eta_0)$, and choose $\eta_n\to\eta_0$ with $V(\eta_n)\to V_*(\eta_0)$. Then $\delta_n:=V(\eta_n)-\phi(\eta_n)-m\ge0$ for $n$ large and $\delta_n\to0$. Choose $h_n\downarrow0$ so that $\delta_n/h_n\to0$, and from \eqref{eq:dpp} choose a control $a^n$ whose right-hand side is at most $V(\eta_n)+h_n^2$.

For $h_n\le1$, near optimality gives
\[
\frac1\beta
\int_0^{h_n}e^{-\rho t}
\sum_{i\ne j}\eta_i^n(t)
\ell\bigl(a_{ij}^n(t),q_{ij}(\eta^n(t))\bigr)\,\dd t
\le
2\|V\|_\infty+1+\frac{\|C\|_\infty}{\rho}.
\]
Since $e^{-\rho t}\ge e^{-\rho}$ on $[0,h_n]$, this implies
\[
\int_0^{h_n}
\sum_{i\ne j}\eta_i^n(t)
\ell\bigl(a_{ij}^n(t),q_{ij}(\eta^n(t))\bigr)\,\dd t
\le K
\]
for a constant $K$ independent of $n$.

The entropy dual inequality
\[
\ell(a,q)\ge \gamma a-q(e^\gamma-1),\qquad \gamma>0,
\]
therefore yields
\[
\int_0^{h_n}\sum_{i\ne j}\eta_i^n(t)a^n_{ij}(t)\dd t
\le \frac{K}{\gamma}+\frac{(d-1)q_{\max}h_n(e^\gamma-1)}{\gamma}.
\]
With $\gamma=\frac12\log(1/h_n)$, the right-hand side tends to zero. Since the $\ell^1$-norm of the drift in \eqref{eq:state-equation} is at most twice the total flux, $\sup_{t\le h_n}\|\eta^n(t)-\eta_n\|_1\to0$. Hence the trajectory remains in the touching neighborhood.

There $V(\zeta)\ge V_*(\zeta)\ge\phi(\zeta)+m$. Applying this at the terminal state in the nearly optimal form of \eqref{eq:dpp} and using the chain rule for $\phi$ gives
\[
\delta_n+h_n^2+(1-e^{-\rho h_n})m
\ge \int_0^{h_n}e^{-\rho t}\bigl[f(\eta^n(t),a^n(t))+D\phi(\eta^n(t))\cdot \eta^n(t)a^n(t)-\rho\phi(\eta^n(t))\bigr]\dd t.
\]
By the definition of the Hamiltonian, the integrand is bounded below after replacing the first two terms by $H(\eta^n(t),D\phi(\eta^n(t)))$. Divide by $h_n$ and pass to the limit, using continuity of \eqref{eq:hamiltonian}, to obtain
\[
\rho V_*(\eta_0)-H(\eta_0,D\phi(\eta_0))\ge0.
\]
Thus $V_*$ is a supersolution on the closed simplex.

\vspace{3pt}
\noindent{\bf Step 3: Comparison, continuity, and uniqueness.} 
Introduce
\[
\mathcal K(\eta,p)
:=
\sum_{i\ne j}\eta_iq_{ij}(\eta)
\bigl(e^{p_j-p_i}-1\bigr).
\]
By \eqref{eq:hamiltonian}, the change of unknown $u=-\beta U$
transforms \eqref{eq:hjb} into
\begin{equation}\label{eq:kraaij-equation}
u(\eta)-\frac1\rho\mathcal K(\eta,Du(\eta))
=
-\frac{\beta}{\rho}C(\eta).
\end{equation}

We verify explicitly the hypotheses of
\cite[Theorem~4]{Kraaij2016}. In Kraaij's notation, define
\[
v(i,j,\eta)
:=
\begin{cases}
\eta_iq_{ij}(\eta),&i\ne j,\\
0,&i=j.
\end{cases}
\]
Then
\[
\mathcal K(\eta,p)
=
\sum_{i,j=1}^d
v(i,j,\eta)\bigl(e^{p_j-p_i}-1\bigr),
\]
which is precisely the Hamiltonian in
\cite[(2.4)]{Kraaij2016}.

For $i\ne j$, the map $v(i,j,\cdot)$ is continuous and
\[
v(i,j,\eta)>0
\qquad\text{whenever }\eta_i>0.
\]
If $\nu_i=0$, then
\[
v(i,j,\eta)
=
v_{\nu,\dagger}(i,j,\eta_i)
v_{\nu,\ddagger}(i,j,\eta),
\]
where
\[
v_{\nu,\dagger}(i,j,s):=s,
\qquad
v_{\nu,\ddagger}(i,j,\eta):=q_{ij}(\eta).
\]
The first factor is increasing in $s$, while the second is continuous
and satisfies
\[
v_{\nu,\ddagger}(i,j,\nu)
=
q_{ij}(\nu)
\ge q_{\min}>0.
\]
For $i=j$, the rate is identically zero, which is the other alternative
allowed by Kraaij's hypotheses. Thus conditions {\rm (a)}--{\rm (b)} of
\cite[Theorem~2]{Kraaij2016} are satisfied.

Equation \eqref{eq:kraaij-equation} is Kraaij's equation with
\[
\lambda=\frac1\rho>0,
\qquad
h(\eta)=-\frac{\beta}{\rho}C(\eta)\in\mathcal{C}(\D).
\]
The use of intrinsic gradients causes no ambiguity because
\begin{equation}\label{eq:hamiltonian-gauge-invariance}
\mathcal K(\eta,p+c\one)=\mathcal K(\eta,p).
\end{equation}
Kraaij formulates the touching conditions using global extrema on the
compact simplex. Our local-touching definition implies those conditions,
since every global extremum is also a local extremum.

{\color{black}The test-function classes also agree. Let
\[
\mathsf P_{\mathsf H}x
:=
x-\frac{\sum_{i=1}^d x_i-1}{d}\one
\]
be the orthogonal projection onto $\mathsf H$. If $\phi$ is of class
$\mathcal C^1$ on a relative neighborhood $U$ of $\D$ in $\mathsf H$,
then
\[
\widetilde\phi(x)
:=
\phi\bigl(\mathsf P_{\mathsf H}x\bigr),
\qquad
x\in\mathsf P_{\mathsf H}^{-1}(U),
\]
is a $\mathcal C^1$ extension to an ambient neighborhood of $\D$.
Conversely, every ambient $\mathcal C^1$ test function restricts to a
$\mathcal C^1$ function on $\mathsf H$. Normal components of the
ambient gradient are immaterial by
\eqref{eq:hamiltonian-gauge-invariance}.
Hence, after restriction to $\D$, our test-function class agrees with
the class used in \cite[Definition~1]{Kraaij2016}.
}

Multiplication by $-\beta$ interchanges upper and lower
semicontinuity and reverses the viscosity inequalities. Hence
$-\beta V_*$ is a bounded upper semicontinuous subsolution of
\eqref{eq:kraaij-equation}, while $-\beta V^*$ is a bounded lower
semicontinuous supersolution. Since the preceding arguments established
the corresponding inequalities on all of $\D$,
\cite[Theorem~4]{Kraaij2016} gives
\[
-\beta V_*\le-\beta V^*.
\]
Therefore $V_*\ge V^*$. The reverse inequality follows from the
definitions of the semicontinuous envelopes, so
\[
V_*=V^*=V.
\]
Thus $V$ is continuous.

More generally, let $\underline U$ be any bounded upper
semicontinuous subsolution and let $\overline U$ be any bounded lower
semicontinuous supersolution of \eqref{eq:hjb} on $\D$. Then
$-\beta\overline U$ is a subsolution and $-\beta\underline U$ is a
supersolution of \eqref{eq:kraaij-equation}. Comparison gives
\[
-\beta\overline U\le-\beta\underline U,
\]
or equivalently
\[
\underline U\le\overline U.
\]
This proves comparison and uniqueness.

\hfill\qed

{\color{black}
\subsection{Monotonicity and limiting values}

}

\color{black}

\subsection{Proof of \Cref{thm:beta-limits}\textcolor{black}{: Monotonicity and inverse-temperature limits}}

{\color{black}
The proof has three steps. We first prove monotonicity, then obtain
the small-$\beta$ limit by controlling entropy, fluxes, and trajectories,
and finally obtain the large-$\beta$ limit using a fast control toward
a minimizer of $C$.
}

For an admissible control $a$ with associated trajectory $\eta$, write
\[
J_C(\eta_0;a)
:=
\int_0^\infty e^{-\rho t}C(\eta(t))\,dt
\]
for its discounted running cost, and
\[
E(\eta_0;a)
:=
\int_0^\infty e^{-\rho t}
\sum_{i\ne j}
\eta_i(t)\ell\bigl(a_{ij}(t),q_{ij}(\eta(t))\bigr)\,dt
\]
for its discounted entropy expenditure, where
\[
\ell(x,r)=x\log(x/r)-x+r.
\]
Thus
\[
J_\beta(\eta_0;a)
=
J_C(\eta_0;a)+\frac{1}{\beta}E(\eta_0;a).
\]
We use the usual conventions at zero. In particular,
$\ell(0,r)=r$ and $\ell(0,0)=0$.

\medskip
\noindent\textcolor{black}{\textbf{Step 1: Monotonicity.}}
Since the entropy expenditure is nonnegative, increasing $\beta$
can only decrease the cost of any fixed control. More precisely,
if $0<\beta_1<\beta_2$, then
\[
J_{\beta_1}(\eta_0;a)\ge J_{\beta_2}(\eta_0;a)
\]
for every admissible control $a$.
Taking infima over the same admissible class gives
\[
V_{\beta_1}(\eta_0)\ge V_{\beta_2}(\eta_0).
\]

\medskip
\noindent\textcolor{black}{\textbf{Step 2: The limit as $\beta\downarrow0$.}}
Assume that the reference rates are Lipschitz. Then
$F(\eta):=\eta q(\eta)$ is Lipschitz on the simplex, so the
reference equation has a unique solution
$\eta^q(\cdot;\eta_0)$ from every initial state.

{\color{black}
We first control entropy and fluxes to obtain trajectory convergence;
we then compare discounted costs to conclude the small-$\beta$ limit.
}

\smallskip
\noindent\textcolor{black}{\textbf{Step 2.1: Bound the entropy expenditure.}}
The control
\[
a(t)=q(\eta^q(t;\eta_0))
\]
is admissible and has zero entropy expenditure. Consequently,
\[
V_\beta(\eta_0)\le V^q(\eta_0).
\]
For each $\beta>0$ and $\eta_0\in\D$, choose an admissible
control $a^\beta$ such that
\[
J_\beta(\eta_0;a^\beta)
\le V_\beta(\eta_0)+\beta,
\]
and denote its trajectory by $\eta^\beta$.
We use this same control throughout the argument.

By the decomposition of $J_\beta$,
\[
\frac{1}{\beta}E(\eta_0;a^\beta)
\le
V_\beta(\eta_0)-J_C(\eta_0;a^\beta)+\beta
\le
V^q(\eta_0)-J_C(\eta_0;a^\beta)+\beta.
\]
Both terms in the last difference are discounted running costs.
Writing
\[
C_{\min}:=\min_{\D}C,
\qquad
C_{\max}:=\max_{\D}C,
\]
we have
\[
V^q(\eta_0)\le\frac{C_{\max}}{\rho},
\qquad
J_C(\eta_0;a^\beta)\ge\frac{C_{\min}}{\rho}.
\]
It follows that
\[
E(\eta_0;a^\beta)
\le
\beta\left(
\frac{C_{\max}-C_{\min}}{\rho}+\beta
\right).
\]
Thus the entropy expenditure tends to zero uniformly in the
initial state.

\smallskip
\noindent\textcolor{black}{\textbf{Step 2.2: Control the integrated flux error.}}
Along the controlled trajectory, define
\[
w_{ij}(t):=\eta_i^\beta(t)a_{ij}^\beta(t),
\qquad
r_{ij}(t):=\eta_i^\beta(t)q_{ij}(\eta^\beta(t)).
\]
Here $w_{ij}$ is the actual population flux, while $r_{ij}$ is
the reference flux evaluated at the same state $\eta^\beta(t)$.
The entropy can be expressed directly in terms of these fluxes,
since
\[
\eta_i^\beta
\ell\bigl(a_{ij}^\beta,q_{ij}(\eta^\beta)\bigr)
=
\ell(w_{ij},r_{ij}).
\]

We use the elementary inequality
\[
(\sqrt{x}-\sqrt{r})^2\le\ell(x,r).
\]
Indeed, for $x,r>0$, setting $u=\sqrt{x/r}$ gives
\[
\ell(x,r)-(\sqrt{x}-\sqrt{r})^2
=
2ru\bigl(u\log u-u+1\bigr)\ge0.
\]
The cases involving zero follow by continuity and the entropy
conventions.

To control the ordinary flux difference, write
\[
x-r
=
(\sqrt{x}-\sqrt{r})^2
+
2\sqrt{r}(\sqrt{x}-\sqrt{r}).
\]
Taking absolute values and applying the preceding inequality yields
\[
|x-r|
\le
\ell(x,r)+2\sqrt{r\,\ell(x,r)}.
\]

Fix $T<\infty$. Integrating this bound over time and summing
over the directed edges, Cauchy--Schwarz gives
\[
\begin{aligned}
\int_0^T\sum_{i\ne j}|w_{ij}(t)-r_{ij}(t)|\,dt
&\le
\int_0^T\sum_{i\ne j}\ell(w_{ij}(t),r_{ij}(t))\,dt\\
&\quad+
2\left(
\int_0^T\sum_{i\ne j}r_{ij}(t)\,dt
\right)^{1/2}
\left(
\int_0^T\sum_{i\ne j}\ell(w_{ij}(t),r_{ij}(t))\,dt
\right)^{1/2}.
\end{aligned}
\]
On $[0,T]$, removing the discount costs at most a factor
$e^{\rho T}$, so
\[
\int_0^T\sum_{i\ne j}\ell(w_{ij}(t),r_{ij}(t))\,dt
\le e^{\rho T}E(\eta_0;a^\beta).
\]
Also, boundedness of the reference rates gives
\[
\sum_{i\ne j}r_{ij}(t)
\le
(d-1)q_{\max}\sum_i\eta_i^\beta(t)
=
(d-1)q_{\max}.
\]
Therefore
\[
\begin{aligned}
\int_0^T\sum_{i\ne j}|w_{ij}(t)-r_{ij}(t)|\,dt
&\le e^{\rho T}E(\eta_0;a^\beta)\\
&\quad+
2\sqrt{
(d-1)q_{\max}T\,e^{\rho T}E(\eta_0;a^\beta)
}
\longrightarrow0.
\end{aligned}
\]
For each fixed $T$, this convergence is uniform in $\eta_0$.

\smallskip
\noindent\textcolor{black}{\textbf{Step 2.3: Prove finite-time trajectory convergence.}}
The preceding estimate says that the controlled trajectory
approximately satisfies the reference equation. Indeed,
each edge flux contributes once to an outflow and once to an
inflow, and hence
\[
\|\dot\eta^\beta(t)-F(\eta^\beta(t))\|_1
\le
2\sum_{i\ne j}|w_{ij}(t)-r_{ij}(t)|
\]
for almost every $t$.

The reference trajectory satisfies
$\dot\eta^q=F(\eta^q)$ and has the same initial state.
Subtracting the two integral equations and using the Lipschitz
constant $L_F$ of $F$, we obtain, for $0\le t\le T$,
\[
\begin{aligned}
\|\eta^\beta(t)-\eta^q(t;\eta_0)\|_1
&\le
L_F\int_0^t
\|\eta^\beta(s)-\eta^q(s;\eta_0)\|_1\,ds\\
&\quad+
2\int_0^T\sum_{i\ne j}|w_{ij}(s)-r_{ij}(s)|\,ds.
\end{aligned}
\]
Gronwall's inequality therefore gives
\[
\sup_{0\le t\le T}
\|\eta^\beta(t)-\eta^q(t;\eta_0)\|_1
\le
2e^{L_FT}
\int_0^T\sum_{i\ne j}|w_{ij}(t)-r_{ij}(t)|\,dt
\longrightarrow0,
\]
uniformly in $\eta_0$.

\smallskip
\noindent\textcolor{black}{\textbf{Step 2.4: Pass to discounted running costs.}}
For every fixed initial state and every fixed time,
continuity of $C$ gives
\[
C(\eta^\beta(t))
\longrightarrow C(\eta^q(t;\eta_0)).
\]
Moreover,
\[
e^{-\rho t}
|C(\eta^\beta(t))-C(\eta^q(t;\eta_0))|
\le
2\|C\|_\infty e^{-\rho t}.
\]
The right-hand side is integrable on $[0,\infty)$, so dominated
convergence yields
\[
J_C(\eta_0;a^\beta)\longrightarrow V^q(\eta_0).
\]

This convergence is uniform in the initial state. To see this
explicitly, fix $T<\infty$. Uniform continuity of $C$ and the
uniform trajectory estimate imply
\[
\sup_{\eta_0\in\D}\sup_{0\le t\le T}
|C(\eta^\beta(t))-C(\eta^q(t;\eta_0))|
\longrightarrow0.
\]
Consequently,
\[
\begin{aligned}
\sup_{\eta_0\in\D}
|J_C(\eta_0;a^\beta)-V^q(\eta_0)|
&\le
\frac{1}{\rho}
\sup_{\eta_0\in\D}\sup_{0\le t\le T}
|C(\eta^\beta(t))-C(\eta^q(t;\eta_0))|\\
&\quad+
\frac{2\|C\|_\infty}{\rho}e^{-\rho T}.
\end{aligned}
\]
First sending $\beta\downarrow0$ with $T$ fixed, and then
$T\to\infty$, proves the asserted uniform convergence of the
running costs.

\smallskip
\noindent\textcolor{black}{\textbf{Step 2.5: Squeeze the value functions.}}
Near optimality and nonnegativity of the entropy term imply
\[
V_\beta(\eta_0)
\ge J_\beta(\eta_0;a^\beta)-\beta
\ge J_C(\eta_0;a^\beta)-\beta.
\]
Combining this with the reference-control upper bound,
\[
J_C(\eta_0;a^\beta)-\beta
\le V_\beta(\eta_0)\le V^q(\eta_0).
\]
The left-hand side converges uniformly to $V^q$, so
\[
\|V_\beta-V^q\|_\infty\longrightarrow0.
\]
Notice that we did not need to establish convergence of
$E(\eta_0;a^\beta)/\beta$ separately: its nonnegativity is
enough for this final comparison.

\medskip
\noindent\textcolor{black}{\textbf{Step 3: The limit as $\beta\to\infty$.}}
Since the running cost is bounded below by $C_{\min}$ and
the entropy term is nonnegative, every admissible control satisfies
\[
J_\beta(\eta_0;a)\ge
\int_0^\infty e^{-\rho t}C_{\min}\,dt
=
\frac{C_{\min}}{\rho}.
\]
Thus
\[
V_\beta(\eta_0)\ge\frac{C_{\min}}{\rho}.
\]
To obtain the matching upper bound, we construct a control
that drives the population increasingly rapidly toward a
minimizer of $C$, while keeping its entropy penalty small.

\smallskip
\noindent\textcolor{black}{\textbf{Step 3.1: Choose a control with an explicit trajectory.}}
Let $\bar\eta\in\D$ satisfy $C(\bar\eta)=C_{\min}$.
For $M\ge1$, define the constant generator by
\[
a_{ij}^M=M\bar\eta_j,\qquad i\ne j,
\]
with diagonal entries chosen so that each row sums to zero.
This is a bounded admissible control for each fixed $M$.
Its trajectory satisfies
\[
\begin{aligned}
\dot\eta_i^M
&=
\sum_{j\ne i}\eta_j^M M\bar\eta_i
-
\eta_i^M\sum_{j\ne i}M\bar\eta_j\\
&=
M\bar\eta_i(1-\eta_i^M)
-
M\eta_i^M(1-\bar\eta_i)\\
&=
M(\bar\eta_i-\eta_i^M).
\end{aligned}
\]
Solving this equation gives
\[
\eta^M(t)
=
\bar\eta+e^{-Mt}(\eta_0-\bar\eta).
\]
In particular,
\[
\|\eta^M(t)-\bar\eta\|_1\le2e^{-Mt},
\]
uniformly over the initial state.

\smallskip
\noindent\textcolor{black}{\textbf{Step 3.2: Approach the minimum running cost.}}
For every fixed $t>0$, the trajectory approaches $\bar\eta$
as $M\to\infty$, and hence
\[
C(\eta^M(t))\longrightarrow C_{\min}.
\]
The bound
\[
e^{-\rho t}|C(\eta^M(t))-C_{\min}|
\le2\|C\|_\infty e^{-\rho t}
\]
allows us to apply dominated convergence. Therefore
\[
J_C(\eta_0;a^M)\longrightarrow\frac{C_{\min}}{\rho}.
\]
The initial condition at $t=0$ does not affect this argument,
since a single time has zero measure.

The convergence is also uniform in $\eta_0$. Indeed, fix
$\varepsilon>0$. The contribution of $[0,\varepsilon]$ to the
running-cost difference is at most
$2\|C\|_\infty\varepsilon$. For all $t\ge\varepsilon$,
\[
\|\eta^M(t)-\bar\eta\|_1\le2e^{-M\varepsilon},
\]
so uniform continuity of $C$ makes the running-cost difference
uniformly small on this entire interval. Its discounted integral
is bounded by that uniform difference divided by $\rho$.
First letting $M\to\infty$, and then $\varepsilon\downarrow0$,
gives
\[
\sup_{\eta_0\in\D}
\left|J_C(\eta_0;a^M)-\frac{C_{\min}}{\rho}\right|
\longrightarrow0.
\]

\smallskip
\noindent\textcolor{black}{\textbf{Step 3.3: Choose the speed to make the entropy penalty vanish.}}
The rates increase with $M$, so the entropy expenditure need
not remain bounded. We only require it to grow more slowly
than $\beta$.

Set
\[
c_q:=\max\{0,\log(1/q_{\min})\}.
\]
For $0<x\le M$ and $q_{\min}\le r\le q_{\max}$,
\[
\ell(x,r)
=x\log(x/r)-x+r
\le x(\log M+c_q)+q_{\max}.
\]
The same inequality holds at $x=0$, since $\ell(0,r)=r$.
Applying it to $a_{ij}^M=M\bar\eta_j$ gives
\[
\begin{aligned}
\sum_{i\ne j}\eta_i^M(t)
\ell(a_{ij}^M,q_{ij}(\eta^M(t)))
&\le
M(\log M+c_q)
\sum_{i\ne j}\eta_i^M(t)\bar\eta_j
+(d-1)q_{\max}\\
&\le M(\log M+c_q)+(d-1)q_{\max}.
\end{aligned}
\]
After discounting and integrating,
\[
E(\eta_0;a^M)
\le
\frac{M(\log M+c_q)+(d-1)q_{\max}}{\rho},
\]
uniformly in the initial state.

Now choose $M=\sqrt{\beta}$ for $\beta\ge1$.
Then $M\to\infty$, while
\[
\frac{E(\eta_0;a^{\sqrt{\beta}})}{\beta}
\le
\frac{
\sqrt{\beta}\bigl(\tfrac12\log\beta+c_q\bigr)
+(d-1)q_{\max}
}{\beta\rho}
\longrightarrow0.
\]
Thus the population approaches the minimizing state increasingly
rapidly, but the resulting entropy expenditure grows slowly
enough that its contribution to the cost vanishes.

Finally, using this control as a competitor,
\[
\frac{C_{\min}}{\rho}
\le V_\beta(\eta_0)
\le
J_C(\eta_0;a^{\sqrt{\beta}})
+\frac{E(\eta_0;a^{\sqrt{\beta}})}{\beta}.
\]
The right-hand side converges uniformly to $C_{\min}/\rho$.
This proves
\[
\left\|V_\beta-\frac{C_{\min}}{\rho}\right\|_\infty
\longrightarrow0.
\]
\hfill\qed

\color{black}

\section{Semiconvexity, semiconcavity, and Gibbs feedback}\label{app:regularity}

\subsection{Regularity and optimal Gibbs feedback}

\begin{proposition}[Global semiconvexity]\label{prop:semiconvexity}
Under \Cref{ass:regularity}, $V+\Phi_K$ is convex on $\D$. Equivalently, $V$ is globally $K$-semiconvex.
\end{proposition}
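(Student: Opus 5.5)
The plan has two parts: show that $V+\Phi_K$ is the value function of a modified problem whose running cost is $\widehat L_K$, and then show that this modified value is convex by interpolating initial states and fluxes.

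\textbf{Step 1 (chain rule).} Fix $\eta_0\in\D$ and an admissible control $a$ with trajectory $\eta(\cdot)$. Write $w_{ij}(t)=\eta_i(t)a_{ij}(t)$, so that $\dot\eta(t)=b(w(t))$ almost everywhere. Suppose first that $\int_0^\infty e^{-\rho t}\sum_{i\ne j}w_{ij}\dd t<\infty$. Since $\eta$ is absolutely continuous and $\Phi_K$ is smooth,
\[
\frac{\dd}{\dd t}\bigl(e^{-\rho t}\Phi_K(\eta(t))\bigr)=-e^{-\rho t}\bigl[\rho\Phi_K(\eta(t))-D\Phi_K(\eta(t))\cdot b(w(t))\bigr].
\]
Integrating over $[0,\infty)$, and using that $\Phi_K$ is bounded on $\D$, gives
\[
\int_0^\infty e^{-\rho t}\widehat L_K(\eta(t),w(t))\dd t=J(\eta_0;a)+\Phi_K(\eta_0).
\]
The integrability hypothesis is harmless. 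The dual inequality $\ell(x,r)\ge\gamma x-r(e^\gamma-1)$ with $\gamma=1$ shows that finite $J$ forces finite discounted flux. Conversely, because $|D\Phi_K\cdot b(w)|\le 2K\sum_{i\ne j}w_{ij}$ on $\D$ while $\ell$ grows superlinearly, infinite $J$ forces the modified cost to be $+\infty$ as well. Hence $V+\Phi_K=\widehat V$, where $\widehat V(\eta_0)$ is the infimum of $\int_0^\infty e^{-\rho t}\widehat L_K(\eta,w)\dd t$ over flux paths $w$ arising from admissible controls from $\eta_0$.

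\textbf{Step 2 (flux reformulation and interpolation).} Take $\eta_0^0,\eta_0^1\in\D$ and $\theta\in(0,1)$. Choose $\varepsilon$-optimal controls $a^0,a^1$, with trajectories $\eta^k$ and fluxes $w^k$. Set $\eta^\theta=(1-\theta)\eta^0+\theta\eta^1$ and $w^\theta=(1-\theta)w^0+\theta w^1$. Then $\dot\eta^\theta=b(w^\theta)$ by linearity of $b$, and $w^\theta_{ij}=0$ whenever $\eta^\theta_i=0$, because then both $\eta^0_i$ and $\eta^1_i$ vanish. Define $a^\theta_{ij}:=w^\theta_{ij}/\eta^\theta_i$ when $\eta^\theta_i>0$ and $0$ otherwise.

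The main obstacle is checking that $a^\theta$ is admissible, i.e.\ locally integrable, and that $\eta^\theta$ is its trajectory. Since $\eta^\theta_i\ge(1-\theta)\eta^0_i$, we have $w^\theta_{ij}/\eta^\theta_i\le a^0_{ij}+a^1_{ij}$ wherever $\eta^0_i,\eta^1_i>0$. If instead $\eta^1_i=0$ on a time set, then $w^1_{ij}=0$ there, and the same bound still holds. So $a^\theta\le a^0+a^1$ componentwise, and local integrability follows. Moreover $\eta^\theta$ solves the linear state equation driven by $a^\theta$: on zero-mass rows both sides vanish, and elsewhere $\eta_i^\theta a_{ij}^\theta=w_{ij}^\theta$. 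By uniqueness of the linear equation, $\eta^\theta$ is the trajectory of $a^\theta$ from $\eta_0^\theta=(1-\theta)\eta_0^0+\theta\eta_0^1$.

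\textbf{Step 3 (conclusion).} Joint convexity of $\widehat L_K$ on $\D\times W$ (\Cref{ass:regularity}) gives, pointwise in $t$,
\[
\widehat L_K(\eta^\theta,w^\theta)\le(1-\theta)\widehat L_K(\eta^0,w^0)+\theta\widehat L_K(\eta^1,w^1).
\]
All terms are bounded below by an integrable function, so discounted integration is legitimate. Combining this with Step 1,
\[
\widehat V(\eta_0^\theta)\le(1-\theta)\widehat V(\eta_0^0)+\theta\widehat V(\eta_0^1)+\varepsilon.
\]
Letting $\varepsilon\downarrow0$ shows that $V+\Phi_K$ is convex on $\D$, i.e.\ $V$ is $K$-semiconvex.
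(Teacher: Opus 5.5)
Your proposal is correct and follows the paper's proof essentially step for step. You identify $V+\Phi_K$ as the value of the modified problem via the discounted chain rule, then interpolate initial states and fluxes (with $a^\theta\le a^0+a^1$) and apply joint convexity of $\widehat L_K$ to $\varepsilon$-optimal controls. The only difference is cosmetic: the paper avoids your finite/infinite-flux case split by first bounding $\widehat L_K$ below uniformly, via $\frac1\beta\ell(w,r)+c\,w\ge-\frac r\beta(e^{\beta K}-1)$ for $|c|\le K$, and then letting $T\to\infty$ in the truncated identity.
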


\begin{proof}
Let $\widehat J_K$ and $\widehat V_K$ denote the discounted cost and value obtained by replacing $L$ with \eqref{eq:modified-flux-lagrangian}. 

{\color{black}
We first verify that the modified extended-real-valued cost is well
defined. For each directed edge set
\[
c_{ij}(\eta):=K(\eta_i-\eta_j).
\]
Since
\[
-D\Phi_K(\eta)\cdot b(w)
=
\sum_{i\ne j}c_{ij}(\eta)w_{ij}
\]
and $|c_{ij}(\eta)|\le K$, scalar entropy minimization gives, for
$r,w\ge0$,
\[
\frac1\beta\ell(w,r)+c_{ij}(\eta)w
\ge
\inf_{z\ge0}
\left\{
\frac1\beta\ell(z,r)+c_{ij}(\eta)z
\right\}
=
\frac r\beta\bigl(1-e^{-\beta c_{ij}(\eta)}\bigr)
\ge
-\frac r\beta\bigl(e^{\beta K}-1\bigr).
\]
Moreover,
\[
\sum_{i\ne j}r_{ij}(\eta)
=
\sum_i\eta_i\sum_{j\ne i}q_{ij}(\eta)
\le
(d-1)q_{\max}.
\]
Since $\rho\Phi_K\ge0$, it follows that
\[
\widehat L_K(\eta,w)
\ge
-\|C\|_\infty
-\frac{(d-1)q_{\max}}{\beta}
\bigl(e^{\beta K}-1\bigr).
\]
Thus the discounted negative part of the modified running cost is
finite, so $\widehat J_K$ is well defined as an extended-real-valued
functional.

Write $J_T$ and $\widehat J_{K,T}$ for the respective costs truncated
at time $T$. For every $T<\infty$, the discounted chain rule and
$\dot\eta=b(w)$ give
\[
\widehat J_{K,T}(\eta_0;a)
=
J_T(\eta_0;a)
+\Phi_K(\eta_0)
-e^{-\rho T}\Phi_K(\eta(T)).
\]
Both running costs are bounded below uniformly, and $\Phi_K$ is bounded
on $\D$. Letting $T\to\infty$ therefore yields, also when the cost is
$+\infty$,
\[
\widehat J_K(\eta_0;a)
=
J(\eta_0;a)+\Phi_K(\eta_0).
\]
Taking infima gives
\[
\widehat V_K=V+\Phi_K.
\]
}

Take two admissible pairs $(\eta^0,a^0)$ and $(\eta^1,a^1)$ and write $w^k_{ij}=\eta_i^k a^k_{ij}$. For $\theta\in[0,1]$, set $\eta^\theta=(1-\theta)\eta^0+\theta\eta^1$ and $w^\theta=(1-\theta)w^0+\theta w^1$. The flux balance law is linear, so $\dot\eta^\theta=b(w^\theta)$. Define $a^\theta_{ij}=w^\theta_{ij}/\eta_i^\theta$ whenever $\eta_i^\theta>0$, and set the unused row to zero when $\eta_i^\theta=0$. 

{\color{black}
For $0<\theta<1$, whenever $\eta_i^\theta(t)>0$, set
\[
\lambda_i(t)
:=
\frac{(1-\theta)\eta_i^0(t)}{\eta_i^\theta(t)}
\in[0,1].
\]
Then
\[
a_{ij}^\theta(t)
=
\lambda_i(t)a_{ij}^0(t)
+
\bigl(1-\lambda_i(t)\bigr)a_{ij}^1(t),
\]
and hence
\[
0\le a_{ij}^\theta(t)
\le a_{ij}^0(t)+a_{ij}^1(t),  \quad i\neq j
\]
Thus $a^\theta$ is locally integrable. If
$\eta_i^\theta(t)=0$, then
$\eta_i^0(t)=\eta_i^1(t)=0$, and consequently
\[
w_{ij}^0(t)=w_{ij}^1(t)=w_{ij}^\theta(t)=0
\qquad (j\ne i).
\]
Setting that unused row of $a^\theta$ equal to zero is therefore
consistent with $w^\theta=\eta^\theta a^\theta$. The endpoint cases
$\theta=0,1$ are immediate. Thus $a^\theta$ is admissible and generates
the interpolated trajectory.
}

 Joint convexity of \eqref{eq:modified-flux-lagrangian} gives convexity of the corresponding modified costs. Applying this to $\varepsilon$-optimal controls and letting $\varepsilon\downarrow0$ proves convexity of $\widehat V_K=V+\Phi_K$.
\end{proof}

\begin{proposition}[Local semiconcavity]\label{prop:semiconcavity}
Under \Cref{ass:regularity}, for every compact $K\subset\Dint$ there
exist constants $r_K>0$ and $C_K<\infty$ such that, whenever
$\eta\in K$, $h\in\T$, and $\|h\|_\infty\le r_K$, one has
\begin{equation}\label{eq:semiconcavity}
V(\eta+h)+V(\eta-h)-2V(\eta)
\le C_K\|h\|_\infty^2.
\end{equation}
\end{proposition}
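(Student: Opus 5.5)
We prove \eqref{eq:semiconcavity} with the same-control perturbation argument sketched after \Cref{thm:gibbs}. Fix a compact $K\subset\Dint$ and set $m_K:=\min_{\eta\in K}\min_i\eta_i>0$. Take $r_K\le m_K/2$, so that $\eta\pm h\in\Dint$ with every coordinate at least $m_K/2$. Fix $\varepsilon\in(0,\|h\|_\infty^2]$ and an $\varepsilon$-optimal admissible control $a$ for $V(\eta)$, with evolution family $P^a$ and trajectory $\eta(t)=\eta P^a(0,t)$. We use the same open-loop control from $\eta\pm h$. By linearity the trajectories are $\eta^\pm(t)=\eta(t)\pm h(t)$ with $h(t):=hP^a(0,t)$. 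Then
\[
V(\eta+h)+V(\eta-h)-2V(\eta)\le J(\eta+h;a)+J(\eta-h;a)-2J(\eta;a)+2\varepsilon,
\]
so it suffices to bound the second difference of the cost along these three trajectories by $C\|h\|_\infty^2$.

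\textbf{Step 1: relative perturbation.} Since $P^a(0,t)$ is a stochastic matrix with nonnegative entries, $|h_i(t)|\le\sum_k|h_k|P^a_{ki}(0,t)\le\|h\|_\infty\sum_kP^a_{ki}(0,t)$. Also $\eta_i(t)=\sum_k\eta_kP^a_{ki}(0,t)\ge m_K\sum_kP^a_{ki}(0,t)$. Together these give
\[
|h_i(t)|\le\frac{\|h\|_\infty}{m_K}\,\eta_i(t),\qquad t\ge0,
\]
which is the relative estimate \eqref{eq:relative-perturbation}. In particular $\theta_i(t):=h_i(t)/\eta_i(t)$ satisfies $|\theta_i|\le1/2$ uniformly in time, and $\|h(t)\|_1\le\|h\|_1$.

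\textbf{Step 2: second differences of the running cost.} Since $C\in\mathcal C^{1,1}$, the state term satisfies $C(\eta+h(t))+C(\eta-h(t))-2C(\eta(t))\le\mathrm{Lip}(DC)\|h(t)\|^2\le c\|h\|_\infty^2$. For the entropy, write $F(\zeta):=\sum_{i\ne j}\zeta_i\ell(a_{ij},q_{ij}(\zeta))$. This equals $\sum_{i\ne j}\zeta_i[a_{ij}\log a_{ij}-a_{ij}]-\zeta_ia_{ij}\log q_{ij}(\zeta)+\zeta_iq_{ij}(\zeta)$. The first bracket is linear in $\zeta$, so its second difference vanishes; this is the point of reusing the same control. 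The last term is $\mathcal C^{1,1}$ with bounded data, giving an $O(\|h\|^2)$ contribution. For the middle term, $g(\zeta_i,\zeta):=\zeta_i\log q_{ij}(\zeta)$ is $\mathcal C^{1,1}$, so its second difference is bounded by $c(|h_i(t)|\,\|h(t)\|+\eta_i(t)\|h(t)\|^2)$. By Step 1 this is at most $c'\eta_i(t)\|h\|_\infty^2$. Multiplying by $a_{ij}$, the error for that term is $\le c''\|h\|_\infty^2\,w_{ij}(t)$ with $w_{ij}=\eta_ia_{ij}$. Hence the second difference of $f$ along the trajectories is bounded by $c\|h\|_\infty^2\bigl(1+\sum_{i\ne j}w_{ij}(t)\bigr)$.

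\textbf{Step 3: entropy coercivity.} It remains to bound $\int_0^\infty e^{-\rho t}\sum_{i\ne j}w_{ij}\,dt$ uniformly. Near-optimality against the zero control, together with boundedness of $V$, bounds the discounted entropy expenditure $E$ by a constant depending only on the data. The dual inequality $\ell(a,q)\ge a-q(e-1)$ (the case $\gamma=1$) then gives $\int e^{-\rho t}\sum w_{ij}\le E+(d-1)q_{\max}(e-1)/\rho$. Integrating Step 2 against $e^{-\rho t}$ and letting $\varepsilon\downarrow0$ yields \eqref{eq:semiconcavity}.

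\textbf{Main obstacle.} The delicate point is Step 2. The $\log$-dependence on $\zeta$ must enter only through $\log q_{ij}(\zeta)$, which is smooth since $q\ge q_{\min}$, and never through $\log\zeta_i$. Each coordinatewise error must also be controlled by $\eta_i(t)$ rather than by a global constant, because $a_{ij}$ may be unbounded while $\eta_ia_{ij}$ is integrable. The relative estimate of Step 1 is exactly what converts $|h_i|a_{ij}$ into flux. I would check carefully that the mixed second differences of $\zeta_i\log q_{ij}(\zeta)$ and $\zeta_iq_{ij}(\zeta)$ really admit the stated bound using only $\mathcal C^{1,1}$, as opposed to $\mathcal C^2$, regularity, via a Taylor argument with Lipschitz first derivatives on the segment.
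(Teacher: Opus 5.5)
Your proposal is correct and follows the paper's proof essentially step for step. The shared ingredients are: the same $\varepsilon$-optimal open-loop control applied from $\eta\pm h$; the relative estimate $|h_i(t)|\le(\|h\|_\infty/m_K)\,\eta_i(t)$ obtained from positivity of $P^a(0,t)$; the split of the entropy into an affine part, a $\zeta_i\log q_{ij}(\zeta)$ part with the product-rule second difference, and a $\zeta_iq_{ij}(\zeta)$ part; and the $\gamma=1$ dual inequality for the discounted flux bound. The one point the paper makes explicit and you leave implicit is that $J(\eta\pm h;a)<\infty$, which is needed before splitting the costs to avoid $\infty-\infty$; it follows from your pointwise second-difference bound together with $f\ge-\|C\|_\infty$.
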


\begin{proof}
Since $K\subset\Dint$ is compact, define
\[
\kappa_K
:=
\min_{\zeta\in K}\min_{i\in[d]}\zeta_i>0,
\qquad
r_K:=\frac{\kappa_K}{2}.
\]
Fix $\eta\in K$ and $h\in\T$ with
$\|h\|_\infty\le r_K$. In particular,
\[
\eta_i\pm h_i\ge\frac{\kappa_K}{2}>0,
\]
so $\eta\pm h\in\Dint$.

Let $0<\varepsilon\le1$, and let $a^\varepsilon$ be an
$\varepsilon$-optimal admissible control from $\eta$. Denote its
trajectory by $\eta^\varepsilon$. Thus
\[
J(\eta;a^\varepsilon)\le V(\eta)+\varepsilon.
\]
By \Cref{def:admissible}, the locally integrable generator
$a^\varepsilon$ generates the stochastic evolution family
$P^{a^\varepsilon}(s,t)$ from \eqref{eq:evolution-family}. Hence,
viewing probability vectors as row vectors,
\[
\eta^\varepsilon(t)
=
\eta P^{a^\varepsilon}(0,t).
\]

\smallskip
\noindent{\bf Relative propagation of the perturbation.} Apply the same open-loop rate control $a^\varepsilon$ to the initial
conditions $\eta+h$ and $\eta-h$. The corresponding trajectories are
\[
\eta^\pm(t)
=
(\eta\pm h)P^{a^\varepsilon}(0,t)
=
\eta^\varepsilon(t)\pm\delta\eta(t),
\qquad
\delta\eta(t)
:=
hP^{a^\varepsilon}(0,t).
\]

Write
\[
h_i=\eta_i\xi_i,
\qquad i\in[d].
\]
Then
\[
\|\xi\|_\infty
\le
\frac{\|h\|_\infty}{\kappa_K}
\le\frac12.
\]
Since $P^{a^\varepsilon}_{ki}(0,t)\ge0$, for each $i$ and $t\ge0$,
\begin{align}
|\delta\eta_i(t)|
&=
\left|
\sum_{k=1}^d
h_kP^{a^\varepsilon}_{ki}(0,t)
\right| \notag\\
&=
\left|
\sum_{k=1}^d
\eta_k\xi_kP^{a^\varepsilon}_{ki}(0,t)
\right| \notag\\
&\le
\|\xi\|_\infty
\sum_{k=1}^d
\eta_kP^{a^\varepsilon}_{ki}(0,t) \notag\\
&=
\|\xi\|_\infty\eta_i^\varepsilon(t).
\label{eq:relative-perturbation}
\end{align}
Set
\[
\vartheta:=\|\xi\|_\infty\le\frac12.
\]
It follows from \eqref{eq:relative-perturbation} that
\[
(1-\vartheta)\eta_i^\varepsilon(t)
\le
\eta_i^\pm(t)
\le
(1+\vartheta)\eta_i^\varepsilon(t).
\]
Moreover, since
$P^{a^\varepsilon}(0,t)\one=\one$ and $h\one=0$,
\[
\sum_i\delta\eta_i(t)=0.
\]
Thus $\eta^\pm(t)\in\D$ for every $t\ge0$, and the same control
$a^\varepsilon$ is admissible from both perturbed initial conditions.
We shall also use
\[
\|\delta\eta(t)\|_1
\le
\vartheta\sum_i\eta_i^\varepsilon(t)
=
\vartheta,
\qquad
\|\delta\eta(t)\|_2\le \vartheta.
\]

\smallskip
\noindent{\bf Entropy, flux, and perturbed-cost integrability.} We next record the entropy and flux bounds that will also ensure that
the two perturbed costs are finite. From $\varepsilon$-optimality and
the boundedness of $C$,
\[
\begin{aligned}
&\int_0^\infty e^{-\rho t}
\sum_{i\ne j}\eta_i^\varepsilon(t)
\ell\bigl(
a_{ij}^\varepsilon(t),
q_{ij}(\eta^\varepsilon(t))
\bigr)\,\dd t\\
&\qquad\le
\beta
\left(
\|V\|_\infty+1+\frac{\|C\|_\infty}{\rho}
\right)
=:M_{\mathrm{ent}}.
\end{aligned}
\]
The entropy dual inequality with parameter $1$ gives
\[
\ell(a,q)\ge a-(e-1)q.
\]
Consequently,
\[
\begin{aligned}
&\int_0^\infty e^{-\rho t}
\sum_{i\ne j}
\eta_i^\varepsilon(t)a_{ij}^\varepsilon(t)\,\dd t\\
&\qquad\le
M_{\mathrm{ent}}
+
\frac{(e-1)(d-1)q_{\max}}{\rho}
=:M_{\mathrm{flux}}<\infty.
\end{aligned}
\]

We now verify that the perturbed costs are finite. For
\[
q^0_{ij}(t)
:=
q_{ij}(\eta^\varepsilon(t)),
\qquad
q^\pm_{ij}(t)
:=
q_{ij}(\eta^\pm(t)),
\]
the identity
\[
\ell(a,q^\pm)
=
\ell(a,q^0)
+
a\log\frac{q^0}{q^\pm}
+
q^\pm-q^0
\]
and the bounds
$q_{\min}\le q^0,q^\pm\le q_{\max}$ give
\[
\ell(a,q^\pm)
\le
\ell(a,q^0)+c_q(1+a)
\]
for a constant $c_q$ depending only on $q_{\min}$ and $q_{\max}$.
Since
\[
\eta_i^\pm(t)
\le
(1+\vartheta)\eta_i^\varepsilon(t),
\]
the preceding entropy and flux estimates imply
\[
\int_0^\infty e^{-\rho t}
\sum_{i\ne j}\eta_i^\pm(t)
\ell\bigl(
a_{ij}^\varepsilon(t),
q_{ij}(\eta^\pm(t))
\bigr)\,\dd t
<\infty.
\]
Since $C$ is bounded, it follows that
\[
J(\eta+h;a^\varepsilon)<\infty,
\qquad
J(\eta-h;a^\varepsilon)<\infty.
\]

\smallskip
\noindent{\bf Cost comparison and second-difference estimate.} We may therefore compare the three costs without any
$\infty-\infty$ ambiguity. Since $a^\varepsilon$ is an admissible
competitor from $\eta+h$ and $\eta-h$,
\[
V(\eta+h)\le J(\eta+h;a^\varepsilon),
\qquad
V(\eta-h)\le J(\eta-h;a^\varepsilon).
\]
On the other hand, $\varepsilon$-optimality gives
\[
-2V(\eta)
\le
-2J(\eta;a^\varepsilon)+2\varepsilon.
\]
Consequently,
\[
\begin{aligned}
&V(\eta+h)+V(\eta-h)-2V(\eta)\\
&\qquad\le
\int_0^\infty e^{-\rho t}
\Big[
f(\eta^\varepsilon(t)+\delta\eta(t),a^\varepsilon(t))
+
f(\eta^\varepsilon(t)-\delta\eta(t),a^\varepsilon(t))\\
&\hspace{48mm}
-
2f(\eta^\varepsilon(t),a^\varepsilon(t))
\Big]\dd t
+
2\varepsilon.
\end{aligned}
\]

Fix $t\ge0$ and abbreviate
\[
x:=\eta^\varepsilon(t),
\qquad
z:=\delta\eta(t),
\qquad
a:=a^\varepsilon(t).
\]
We estimate the symmetric second difference of $f$ with respect to
the state variable, holding $a$ fixed.

We repeatedly use the following consequence of
$\mathcal{C}^{1,1}$ regularity. If $\varphi$ has Lipschitz gradient
with Lipschitz constant $L_\varphi$, then
\[
\begin{aligned}
&\varphi(x+z)+\varphi(x-z)-2\varphi(x)\\
&\qquad=
\int_0^1
\bigl(
D\varphi(x+sz)-D\varphi(x-sz)
\bigr)\cdot z\,\dd s,
\end{aligned}
\]
and therefore
\[
\left|
\varphi(x+z)+\varphi(x-z)-2\varphi(x)
\right|
\le
L_\varphi\|z\|_2^2.
\]
Since $\|z\|_2\le \vartheta$, the contribution of the state cost $C$ is
bounded in absolute value by $c\vartheta^2$, where here and below $c$ denotes
a finite constant depending only on the model data.

For each directed edge $i\ne j$, expand its entropy contribution as
\[
x_i
\left[
a_{ij}\log\frac{a_{ij}}{q_{ij}(x)}
-a_{ij}+q_{ij}(x)
\right]
=
x_i a_{ij}(\log a_{ij}-1)
-
a_{ij}x_i\log q_{ij}(x)
+
x_iq_{ij}(x),
\]
with the usual convention $0\log0=0$. For fixed $a_{ij}$, the first
term on the right-hand side is affine in $x$, and hence its symmetric
second difference is zero.

Define
\[
\varphi_{ij}(x):=\log q_{ij}(x),
\qquad
g_{ij}(x):=-x_i\varphi_{ij}(x).
\]
Since $q_{ij}\ge q_{\min}>0$ and
$q_{ij}\in\mathcal{C}^{1,1}$, the function $\varphi_{ij}$ belongs to
$\mathcal{C}^{1,1}$. Thus there are constants $B,L<\infty$,
independent of $i,j$, such that
\[
\|D\varphi_{ij}(x)\|\le B
\]
and
\[
\|D\varphi_{ij}(x)-D\varphi_{ij}(y)\|
\le
L\|x-y\|_2,
\qquad x,y\in\D.
\]
Algebraically,
\[
\begin{aligned}
&g_{ij}(x+z)+g_{ij}(x-z)-2g_{ij}(x)\\
&\quad=
-x_i
\bigl[
\varphi_{ij}(x+z)
+
\varphi_{ij}(x-z)
-
2\varphi_{ij}(x)
\bigr]\\
&\qquad
-z_i
\bigl[
\varphi_{ij}(x+z)
-
\varphi_{ij}(x-z)
\bigr].
\end{aligned}
\]
The first bracket is bounded by $L\|z\|_2^2$, while
\[
|\varphi_{ij}(x+z)-\varphi_{ij}(x-z)|
\le
2B\|z\|_2.
\]
Using $|z_i|\le \vartheta x_i$ and $\|z\|_2\le \vartheta$, we obtain
\[
\begin{aligned}
&
|g_{ij}(x+z)+g_{ij}(x-z)-2g_{ij}(x)|\\
&\qquad\le
Lx_i\vartheta^2+2B(\vartheta x_i)\vartheta
\le
c\vartheta^2x_i.
\end{aligned}
\]
Multiplication by $a_{ij}\ge0$ therefore gives
\[
a_{ij}
|g_{ij}(x+z)+g_{ij}(x-z)-2g_{ij}(x)|
\le
c\vartheta^2x_i a_{ij}.
\]

Finally, define
\[
R_{ij}(x):=x_iq_{ij}(x).
\]
Then $R_{ij}\in\mathcal{C}^{1,1}$ and
\[
|R_{ij}(x+z)+R_{ij}(x-z)-2R_{ij}(x)|
\le
c\|z\|_2^2
\le
c\vartheta^2.
\]
Summing over the directed edges and recalling the factor $1/\beta$,
we conclude that
\[
\begin{aligned}
&
f(x+z,a)+f(x-z,a)-2f(x,a)\\
&\qquad\le
c\vartheta^2
\left(
1+\sum_{i\ne j}x_i a_{ij}
\right).
\end{aligned}
\]

Integrating this estimate and using the previously established flux
bound gives
\[
\begin{aligned}
V(\eta+h)+V(\eta-h)-2V(\eta)
&\le
c\vartheta^2
\left(
\frac1\rho+M_{\mathrm{flux}}
\right)
+
2\varepsilon\\
&\le
C_K\|h\|_\infty^2+2\varepsilon,
\end{aligned}
\]
where we used
\[
\vartheta
=
\|\xi\|_\infty
\le
\frac{\|h\|_\infty}{\kappa_K}.
\]
The constant $C_K$ is independent of $\eta$, $h$, and
$\varepsilon$. Letting $\varepsilon\downarrow0$ proves
\eqref{eq:semiconcavity}.
\end{proof}

By \Cref{prop:semiconvexity,prop:semiconcavity}, $V$ is locally both semiconvex and semiconcave on $\Dint$; hence $V\in\mathcal{C}^{1,1}_{\mathrm{loc}}(\Dint)$, equivalently its intrinsic gradient is locally Lipschitz. This is the standard finite-dimensional semiconvexity/semiconcavity criterion; see, for example, \cite{CannarsaSinestrari2004}.

\subsection{Proof of \Cref{thm:gibbs}\textcolor{black}{: Global regularity and Gibbs feedback}}
At every $\eta\in\Dint$, the regularity just proved makes
\eqref{eq:hjb} classical. We divide the proof into five steps:
identification of the Gibbs minimizer, uniform bounds on the intrinsic
gradient differences and rates, global well-posedness of the
closed-loop equation, verification, and uniform regularity up to the
boundary.

\medskip
\noindent\textbf{Step 1: Pointwise minimization and Gibbs formula.}

For fixed $p=DV(\eta)$, the minimization separates over directed edges. The edge objective is
\[
a(p_j-p_i)+\frac1\beta\left(a\log\frac{a}{q_{ij}(\eta)}-a+q_{ij}(\eta)\right),\qquad a\ge0.
\]
For $a>0$, the first and second derivatives of the edge objective are
\[
p_j-p_i+\frac1\beta\log\frac{a}{q_{ij}(\eta)}
\qquad\text{and}\qquad
\frac1{\beta a}>0.
\]
The first derivative tends to $-\infty$ as $a\downarrow0$ and to
$+\infty$ as $a\to\infty$. Hence the unique critical point is the
global minimizer on $[0,\infty)$. Solving the first-order condition gives exactly \eqref{eq:gibbs-feedback}.

\medskip
\noindent\textbf{Step 2: Uniform gradient and rate bounds.}
We next obtain a global bound on the intrinsic gradient. At the minimizer, the contribution of the directed edge $(i,j)$ to the Hamiltonian is $(q_{ij}(\eta)-a^*_{ij}(\eta))/\beta$. Hence the classical HJB equation may be written as

$$
\rho V(\eta)
=
C(\eta)
+
\frac{1}{\beta}\sum_{i\ne j}\eta_i
\bigl(q_{ij}(\eta)-a^*_{ij}(\eta)\bigr).
$$

Rearranging the preceding identity gives
\[
\begin{aligned}
\sum_{i\ne j}\eta_i a^*_{ij}(\eta)
&=
\sum_{i\ne j}\eta_iq_{ij}(\eta)
+
\beta\bigl(C(\eta)-\rho V(\eta)\bigr)\\
&\le
(d-1)q_{\max}
+
\beta\bigl(\|C\|_\infty+\rho\|V\|_\infty\bigr)
=:B.
\end{aligned}
\]
Fix $\eta\in\Dint$ and choose an index $k$ such that $\eta_k\ge 1/d$. Since every summand in the preceding bound is nonnegative, for each $i\ne k$,

$$
\eta_k a^*_{ki}(\eta)\le B,
\qquad\text{and therefore}\qquad
a^*_{ki}(\eta)\le dB.
$$

Using \eqref{eq:gibbs-feedback} for the edge $(k,i)$ and the lower bound $q_{ki}\ge q_{\min}$, we obtain

$$
q_{\min}
\exp\!\left[-\beta\bigl(\partial_iV-\partial_kV\bigr)\right]
\le dB.
$$

Thus $\partial_iV-\partial_kV$ admits a uniform lower bound, independent of $\eta$ and $i$.

For the opposite bound, consider the feasible point
\[
\zeta:=\eta+\frac{\eta_k}{2}(e_i-e_k)\in\D.
\]
Global $K$-semiconvexity gives
\[
V(\zeta)
\ge
V(\eta)
+
\frac{\eta_k}{2}\bigl(\partial_iV(\eta)-\partial_kV(\eta)\bigr)
-\frac{K\eta_k^2}{4}.
\]
Consequently,
\[
\partial_iV(\eta)-\partial_kV(\eta)
\le
\frac{2}{\eta_k}\bigl(V(\zeta)-V(\eta)\bigr)+\frac{K\eta_k}{2}
\le
2d\,\osc_{\D}V+\frac K2.
\]

Hence $\partial_iV-\partial_kV$ is bounded uniformly from both above and below. Since

$$
\partial_iV-\partial_jV
=
(\partial_iV-\partial_kV)
-
(\partial_jV-\partial_kV),
$$

all intrinsic gradient differences $\partial_iV-\partial_jV$ are uniformly bounded on $\Dint$. Substituting these bounds into \eqref{eq:gibbs-feedback}, and using

$$
0<q_{\min}\le q_{ij}(\eta)\le q_{\max}<\infty,
$$

yields the constants $0<\underline a\le \overline a<\infty$ asserted in \eqref{eq:global-rate-bounds}.

\medskip
\noindent\textbf{Step 3: Global closed-loop well-posedness.} Since $DV$ and $q$ are locally Lipschitz on $\Dint$, the closed-loop
vector field is locally Lipschitz there. Let $[0,T_{\max})$ be the
maximal interval of existence. By \eqref{eq:global-rate-bounds},
\[
\dot\eta_i(t)
\ge
-(d-1)\overline a\,\eta_i(t),
\]
and hence
\[
\eta_i(t)
\ge
\eta_i(0)e^{-(d-1)\overline a t},
\qquad t<T_{\max}.
\]
The closed-loop vector field is bounded because the rates are
uniformly bounded. Therefore, if $T_{\max}<\infty$, the trajectory is
Lipschitz and has a limit $\bar\eta$ as $t\uparrow T_{\max}$. The
preceding estimate gives
\[
\bar\eta_i
\ge
\eta_i(0)e^{-(d-1)\overline aT_{\max}}>0
\]
for every $i$, so $\bar\eta\in\Dint$. Local existence from $\bar\eta$
extends the solution beyond $T_{\max}$, contradicting maximality.
Thus $T_{\max}=\infty$.

\medskip
\noindent\textbf{Step 4: Verification and uniqueness.}

Let $a$ be any admissible control from $\eta_0\in\Dint$. If
$J(\eta_0;a)=+\infty$, the verification inequality is immediate, so
suppose that the cost is finite. By
\eqref{eq:interior-preservation}, the trajectory is contained in a
compact subset of $\Dint$ on every finite interval, and the chain rule
for $V$ applies.

Define the Hamiltonian gap
\[
G(\eta,a)
:=
f(\eta,a)+DV(\eta)\cdot(\eta a)-\rho V(\eta).
\]
The classical HJB equation and
\eqref{eq:hamiltonian-variational} imply
\[
G(\eta,a)\ge0.
\]
For every $T<\infty$, the chain rule gives
\[
\begin{aligned}
\int_0^T e^{-\rho t}
G(\eta(t),a(t))\,\dd t
&=
\int_0^T e^{-\rho t}
f(\eta(t),a(t))\,\dd t\\
&\quad
+e^{-\rho T}V(\eta(T))-V(\eta_0).
\end{aligned}
\]
Since $V$ is bounded, letting $T\to\infty$ gives
\[
J(\eta_0;a)-V(\eta_0)
=
\int_0^\infty e^{-\rho t}
G(\eta(t),a(t))\,\dd t
\ge0.
\]

Along the closed-loop trajectory $\eta^*$, define the time-dependent
control $\alpha^*(t):=a^*(\eta^*(t))$. The uniform rate bounds
\eqref{eq:global-rate-bounds} and the bounds on $q$ imply that
$\alpha^*$ is admissible and has bounded running cost, hence finite
discounted cost. Since it attains the Hamiltonian minimum along this
trajectory, its Hamiltonian gap vanishes identically, and the preceding
identity gives
\[
J(\eta_0;\alpha^*)=V(\eta_0).
\]
Hence the Gibbs feedback is optimal.

If $a$ is any other optimal admissible control from $\eta_0$, then
\[
0
=
J(\eta_0;a)-V(\eta_0)
=
\int_0^\infty e^{-\rho t}
G(\eta(t),a(t))\,\dd t.
\]
Therefore
\[
G(\eta(t),a(t))=0
\qquad\text{for almost every }t.
\]
Thus $a(t)$ attains the Hamiltonian minimum almost everywhere. Since
every coordinate of $\eta(t)$ is positive and every edge objective is
strictly convex, the minimizer is unique. Consequently,
\[
a(t)=a^*(\eta(t))
\qquad\text{for almost every }t.
\]
Uniqueness of the closed-loop ODE then also gives uniqueness of the
optimal trajectory.

\medskip
\noindent\textbf{Step 5: Uniform regularity up to the boundary.}

The local semiconcavity result was used above to construct and verify
the Gibbs feedback. We now use the uniform upper bound on the optimal
rates to obtain a semiconcavity estimate whose constant does not
deteriorate near the boundary.

For every rate matrix $a$ whose off-diagonal entries satisfy
$0\le a_{ij}\le\overline a$, the map $\eta\mapsto f(\eta,a)$ has a
$\mathcal{C}^{1,1}$ constant that is uniform over all such $a$.
Indeed, for each directed edge,
\[
\eta_i\ell\bigl(a_{ij},q_{ij}(\eta)\bigr)
=
\eta_i a_{ij}(\log a_{ij}-1)
-a_{ij}\eta_i\log q_{ij}(\eta)
+\eta_iq_{ij}(\eta),
\]
with $0\log0=0$. The first term is affine in $\eta$. Since
$q_{ij}\in\mathcal{C}^{1,1}$, $q_{ij}\ge q_{\min}>0$, and
$a_{ij}\le\overline a$, the gradients of the remaining terms are
uniformly Lipschitz. Together with $C\in\mathcal{C}^{1,1}$, this gives
a constant $L_f<\infty$ such that
\begin{equation}\label{eq:uniform-running-semiconcavity}
\left|
f(x+z,a)+f(x-z,a)-2f(x,a)
\right|
\le
L_f\|z\|_2^2
\end{equation}
whenever $x,x+z,x-z\in\D$ and the off-diagonal entries of $a$ lie in
$[0,\overline a]$.

Fix $\eta\in\Dint$ and $h\in\T$ such that
$\eta+h,\eta-h\in\D$. Let $\eta^*(\cdot)$ be the optimal
closed-loop trajectory starting from $\eta$, and regard
$a(t):=a^*(\eta^*(t))$ as a fixed open-loop control. By
\eqref{eq:global-rate-bounds}, its off-diagonal entries lie in
$[0,\overline a]$ uniformly in $t,i,j$, and by Step~4,
$V(\eta)=J(\eta;a)$.

Apply the same time-dependent rate matrix $a(t)$ to the initial
conditions $\eta+h$ and $\eta-h$. If $P(0,t)$ denotes the stochastic
evolution matrix generated by $a$, then the resulting trajectories are
\[
\eta^\pm(t)
=
(\eta\pm h)P(0,t)
=
\eta^*(t)\pm z(t),
\qquad
z(t):=hP(0,t).
\]
Since $P(0,t)$ is stochastic,
\[
\|z(t)\|_2
\le
\|z(t)\|_1
\le
\|h\|_1
\le
\sqrt d\,\|h\|_2.
\]
The bounded control $a$ has finite cost from all three initial
conditions. Hence its optimality from $\eta$ and admissibility from
$\eta\pm h$ give
\[
\begin{aligned}
V(\eta+h)+V(\eta-h)-2V(\eta)
&\le
\int_0^\infty e^{-\rho t}
\Big[
f(\eta^*(t)+z(t),a(t))\\
&\hspace{4.7em}
+f(\eta^*(t)-z(t),a(t))
-2f(\eta^*(t),a(t))
\Big]\dd t.
\end{aligned}
\]
Using \eqref{eq:uniform-running-semiconcavity} and the preceding bound
on $z(t)$ yields
\begin{equation}\label{eq:global-semiconcavity}
V(\eta+h)+V(\eta-h)-2V(\eta)
\le
\frac{dL_f}{\rho}\|h\|_2^2.
\end{equation}
Crucially, the constant is independent of the distance from $\eta$ to
$\partial\D$.

It remains to extend \eqref{eq:global-semiconcavity} to
$\eta\in\partial\D$. Put
$\eta^{\mathrm{unif}}:=d^{-1}\one$ and, for
$\varepsilon\in(0,1)$, define
\[
\eta_\varepsilon
:=
(1-\varepsilon)\eta+\varepsilon\eta^{\mathrm{unif}},
\qquad
h_\varepsilon:=(1-\varepsilon)h.
\]
Then $\eta_\varepsilon$ and
$\eta_\varepsilon\pm h_\varepsilon$ belong to $\Dint$. Applying
\eqref{eq:global-semiconcavity} to
$(\eta_\varepsilon,h_\varepsilon)$, letting
$\varepsilon\downarrow0$, and using continuity of $V$ proves the same
estimate for every $\eta\in\D$ and $h\in\T$ such that
$\eta\pm h\in\D$.

By \Cref{prop:semiconvexity}, $V$ is globally $K$-semiconvex.
Combining this lower second-order bound with
\eqref{eq:global-semiconcavity} shows that there is a constant
$L_V<\infty$ such that
\begin{equation}\label{eq:global-gradient-lipschitz}
\|D_{\T}V(\eta)-D_{\T}V(\zeta)\|_2
\le
L_V\|\eta-\zeta\|_2,
\qquad
\eta,\zeta\in\Dint.
\end{equation}
Since $\Dint$ is dense in $\D$, the intrinsic gradient extends uniquely
to the boundary. Indeed, if
$(\eta^n)_{n\ge1}\subset\Dint$ converges to
$\eta\in\partial\D$, then \eqref{eq:global-gradient-lipschitz} shows
that $(D_{\T}V(\eta^n))_{n\ge1}$ is Cauchy in $\T$. The limit is
independent of the approximating sequence, again by
\eqref{eq:global-gradient-lipschitz}. We therefore define
\[
D_{\T}V(\eta)
:=
\lim_{\substack{\zeta\to\eta\\ \zeta\in\Dint}}
D_{\T}V(\zeta),
\qquad
\eta\in\partial\D.
\]
Passing to the limit in \eqref{eq:global-gradient-lipschitz} shows that
the same Lipschitz estimate holds for every $\eta,\zeta\in\D$.

It remains to verify that this extension is the intrinsic derivative
of $V$ at boundary points. If $\eta,\zeta\in\Dint$, the segment joining
them lies in $\Dint$, and the fundamental theorem of calculus together
with \eqref{eq:global-gradient-lipschitz} gives
\[
\left|
V(\zeta)-V(\eta)
-
D_{\T}V(\eta)\cdot(\zeta-\eta)
\right|
\le
\frac{L_V}{2}\|\zeta-\eta\|_2^2.
\]
For general $\eta,\zeta\in\D$, approximate both points from the
interior by convex combinations with $\eta^{\mathrm{unif}}$ and pass
to the limit using continuity of $V$ and of the extended gradient.
Thus the same quadratic expansion holds on the closed simplex. Hence
$V$ is intrinsically differentiable on $\D$, its intrinsic gradient is
globally Lipschitz, and $V$ is intrinsically
$\mathcal{C}^{1,1}$ on the closed simplex.

Since the intrinsic gradient extends continuously to the closed simplex,
the Gibbs formula \eqref{eq:gibbs-feedback} defines a continuous rate
matrix there. Passing to the limit from interior states in the classical
HJB equation shows that this matrix attains the Hamiltonian minimum at
every boundary state as well. For each positive-mass state, strict
convexity still gives a unique minimizing row. For a zero-mass state,
the entire row contribution to the Hamiltonian vanishes, so its
off-diagonal rates may be chosen arbitrarily in $[0,\infty)$, with the
diagonal determined by the zero-row-sum condition. This completes the proof.
\hfill\qed

{\color{black}
\subsection{Proof of \Cref{thm:uniform-small-beta-feedback}: Uniform small-$\beta$ convergence of the Gibbs feedback}

\begin{proof}
Comparison with the reference control and nonnegativity of the
entropy penalty give the uniform bounds
\begin{equation}\label{eq:temperature-value-bounds}
\frac{C_{\min}}\rho\le V_\beta\le\frac{C_{\max}}\rho,
\qquad \osc_{\D}V_\beta\le M:=\frac{\osc_{\D}C}{\rho},
\qquad |C-\rho V_\beta|\le\osc_{\D}C.
\end{equation}
Fix $\eta\in\D$, write $g=\beta D_{\T}V_\beta(\eta)$, and let
$\bar g=\sum_i\eta_i g_i$. It suffices to control the largest
component above this average and the average above the smallest.

Choose $j$ with $g_j=\max_i g_i$, and apply semiconvexity between
$\eta$ and $y=\eta+t(e_j-\eta)$, where $0<t\le1$. Since
$\|e_j-\eta\|_2^2\le2$, the linear term is bounded by the
oscillation of $V_\beta$ plus $K_\beta t^2$. Dividing by $t$
and multiplying by $\beta$ gives
\begin{equation}\label{eq:short-segment-scaled-gradient}
0\le\max_i g_i-\bar g
\le\frac{\beta M}{t}+\beta K_\beta t.
\end{equation}
For $\beta\le\min\{\beta_0,1\}$, take $t=\sqrt\beta$.
By \eqref{eq:small-beta-semiconvexity-condition}, the right-hand
side is at most $\delta_\beta:=(M+L)\sqrt\beta$, uniformly in $\eta$.
The short segment is the only change needed in the semiconvexity
argument; it avoids requiring $\beta K_\beta$ to vanish.

Substituting the Gibbs minimizer into the HJB equation yields
\begin{equation}\label{eq:scaled-gradient-hjb-balance}
\sum_{i\ne j}\eta_iq_{ij}(\eta)(e^{g_i-g_j}-1)
=\beta(C(\eta)-\rho V_\beta(\eta)).
\end{equation}
Let $P_\beta$ be the sum of the positive terms on the left and
$N_\beta$ the absolute sum of the negative terms. The elementary
inequality $1-e^{-r}\le r$ for $r\ge0$ gives
$N_\beta\le(d-1)q_{\max}(\max_i g_i-\bar g)$.
Using \eqref{eq:temperature-value-bounds} and \eqref{eq:scaled-gradient-hjb-balance}
therefore bounds $P_\beta$ by
$(d-1)q_{\max}\delta_\beta+\beta\osc_{\D}C$.
If $g_m=\min_i g_i$, all edges into $m$ contribute to $P_\beta$.
Keeping these edges and using $e^r-1\ge r$ shows that
$P_\beta\ge q_{\min}(\bar g-g_m)$. Consequently,
\[
\max_i g_i-\min_i g_i
\le\left(1+\frac{(d-1)q_{\max}}{q_{\min}}\right)\delta_\beta
 +\frac{\beta\osc_{\D}C}{q_{\min}}
=:A_\beta.
\]
This estimate holds on the closed simplex: the HJB identity and
intrinsic gradient extend there by \Cref{thm:gibbs}.
The Gibbs formula now bounds every feedback error by
$q_{\max}(e^{A_\beta}-1)$, proving
\eqref{eq:uniform-feedback-convergence} with order $O(\sqrt\beta)$.
If $K_\beta$ is uniformly bounded, take $t=1$ in
\eqref{eq:short-segment-scaled-gradient}; the same reasoning gives
$\delta_\beta=O(\beta)$ and the sharper asserted rate.
\end{proof}

\color{black}

\subsection{Proof of \Cref{cor:small-beta-trajectories}: Small-$\beta$ convergence of the closed-loop dynamics}

For each $\beta$, the Gibbs rates are globally Lipschitz and bounded
on $\D$ by \Cref{thm:gibbs}. If $\eta_i=0$, then
\[
b_i^\beta(\eta)
=
\sum_{j\ne i}\eta_j
a_{ji}^{*,\beta}(\eta)
\ge0,
\]
while $\sum_i b_i^\beta(\eta)=0$. Thus the closed-loop vector field is
globally Lipschitz on $\D$, tangent to its affine hull, and inward
pointing at its boundary. It therefore generates a unique global
trajectory in $\D$ from every $\eta_0\in\D$.
The same is true for the reference dynamics. Set
\[
\varepsilon_\beta:=\max_{i\ne j}\sup_{\eta\in\D}
|a_{ij}^{*,\beta}(\eta)-q_{ij}(\eta)|,
\qquad
b^q(\eta):=\sum_{i\ne j}\eta_iq_{ij}(\eta)(e_j-e_i),
\]
and define $b^\beta$ by replacing $q$ with $a^{*,\beta}$.
The theorem gives $\varepsilon_\beta\to0$ and immediately proves
the first assertion. Also,
\[
\sup_{\eta\in\D}\|b^\beta(\eta)-b^q(\eta)\|_1
\le2(d-1)\varepsilon_\beta.
\]
Let $L_q$ be a Lipschitz constant of $b^q$ in the $\ell^1$ norm.
Gronwall's inequality yields, uniformly in $\eta_0\in\D$,
\[
\sup_{0\le t\le T}
\|\eta^\beta(t;\eta_0)-\eta^q(t;\eta_0)\|_1
\le2(d-1)\varepsilon_\beta T e^{L_qT},
\]
which proves \eqref{eq:small-beta-finite-horizon-trajectories}.
For \eqref{eq:small-beta-discounted-trajectories}, fix $T<\infty$
and split the integral at $T$. Since the $\ell^1$ diameter of
$\D$ is two,
\[
\begin{aligned}
&\sup_{\eta_0\in\D}\int_0^\infty e^{-\rho t}
\|\eta^\beta(t;\eta_0)-\eta^q(t;\eta_0)\|_1\,dt\\
&\qquad\le
\frac{1-e^{-\rho T}}{\rho}
\sup_{\eta_0\in\D}\sup_{0\le t\le T}
\|\eta^\beta(t;\eta_0)-\eta^q(t;\eta_0)\|_1
+\frac{2e^{-\rho T}}{\rho}.
\end{aligned}
\]
For each fixed $T$, the first term tends to zero as $\beta\downarrow0$
by the finite-horizon convergence. Taking first the limit superior
as $\beta\downarrow0$ and then letting $T\to\infty$ proves
\eqref{eq:small-beta-discounted-trajectories}.

For the rate assertion, let $L$ be a common Lipschitz constant
of the $q_{ij}$. Adding and subtracting
$q_{ij}(\eta^\beta(t;\eta_0))$ gives
\[
\begin{aligned}
&|a_{ij}^{*,\beta}(\eta^\beta(t;\eta_0))
-q_{ij}(\eta^q(t;\eta_0))|\\
&\qquad\le
|a_{ij}^{*,\beta}(\eta^\beta(t;\eta_0))
-q_{ij}(\eta^\beta(t;\eta_0))|
+|q_{ij}(\eta^\beta(t;\eta_0))
-q_{ij}(\eta^q(t;\eta_0))|\\
&\qquad\le\varepsilon_\beta
+L\|\eta^\beta(t;\eta_0)-\eta^q(t;\eta_0)\|_1.
\end{aligned}
\]
Multiplying by $e^{-\rho t}$, integrating, and summing over
$i\ne j$, we obtain
\[
\begin{aligned}
&\sup_{\eta_0\in\D}\int_0^\infty e^{-\rho t}
\sum_{i\ne j}
|a_{ij}^{*,\beta}(\eta^\beta(t;\eta_0))
-q_{ij}(\eta^q(t;\eta_0))|\,dt\\
&\qquad\le
\frac{d(d-1)\varepsilon_\beta}{\rho}
+d(d-1)L\sup_{\eta_0\in\D}\int_0^\infty e^{-\rho t}
\|\eta^\beta(t;\eta_0)-\eta^q(t;\eta_0)\|_1\,dt.
\end{aligned}
\]
Both terms tend to zero as $\beta\downarrow0$, by
$\varepsilon_\beta\to0$ and
\eqref{eq:small-beta-discounted-trajectories}, respectively.
This proves \eqref{eq:small-beta-discounted-rates}.
\hfill\qed}

\section{Global exponential convergence}\label{app:convergence}

\subsection{Free-energy dissipation and global exponential stability}

The entropy-production estimate has two complementary regimes. In a fixed interior region, every edge conductance $q_{ij}(\eta)\sqrt{\eta_i\eta_j}$ is bounded below, so the dissipation controls all chemical-potential differences and hence the tangential gradient of the free energy. Near the boundary these conductances may degenerate through the population masses, but the logarithmic part of the chemical potential diverges. Its exponential appearance in the Gibbs flux compensates for the small mass and produces a uniformly positive contribution from at least one edge. The next two lemmas isolate the strong-convexity and complete-graph identities used in the interior regime; the boundary mechanism is treated directly in the proof of \Cref{thm:convergence}.

\begin{lemma}[Strong convexity and projected PL inequality]\label{lem:pl}
Under \Cref{ass:regularity}, suppose $2\beta K<1$ and write $\mu=1-2\beta K$. Then $\mathcal F$ in \eqref{eq:free-energy} is $\mu$-strongly convex on the affine simplex, has a unique minimizer $\eta^\infty\in\Dint$, and for every $\eta\in\Dint$,
\begin{equation}\label{eq:projected-pl}
\|\Pi_{\T}\psi(\eta)\|_2^2
\ge
2\mu\bigl(\mathcal F(\eta)-\mathcal F(\eta^\infty)\bigr).
\end{equation}
Moreover,
\begin{equation}\label{eq:strong-convexity-gap}
\mathcal F(\eta)-\mathcal F(\eta^\infty)
\ge
\frac\mu2\|\eta-\eta^\infty\|_2^2.
\end{equation}
\end{lemma}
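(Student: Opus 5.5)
The plan is to combine two ingredients. The first is strong convexity of negative entropy on the simplex in the Euclidean norm. The second is the global $K$-semiconvexity of $V$ from \Cref{prop:semiconvexity}. First, note that $E(\eta):=\sum_i\eta_i\log\eta_i$ has Hessian $\operatorname{diag}(1/\eta_i)$ on $\Dint$, and $1/\eta_i\ge1$ because $\eta_i\le1$. Hence $E-\frac12\|\cdot\|_2^2$ is convex on $\Dint$. Since $E$ is continuous on $\D$, this convexity extends to the closed simplex. By \Cref{prop:semiconvexity}, $2\beta(V+\Phi_K)=2\beta V+\beta K\|\cdot\|_2^2$ is convex. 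Adding the two convex functions shows that
\[
\mathcal F-\tfrac{1-2\beta K}{2}\|\cdot\|_2^2
\]
is convex on $\D$. This is exactly $\mu$-strong convexity.

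Existence of a minimizer follows from continuity of $\mathcal F$ on the compact simplex, and uniqueness from strict convexity. For interiority, I would argue by contradiction. Suppose the minimizer $\eta^\infty$ has $\eta^\infty_i=0$. Move along the segment $\eta^s=(1-s)\eta^\infty+s\,d^{-1}\one$. Along this segment, the entropy has one-sided derivative $-\infty$ at $s=0$, because the term $\eta_i\log\eta_i$ has slope $\log\eta_i\to-\infty$. The term $2\beta V$, on the other hand, has bounded difference quotients: $V$ is intrinsically $\mathcal C^{1,1}$ on $\D$ by \Cref{thm:gibbs}, or more crudely its intrinsic gradient is globally Lipschitz and hence bounded. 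So $\mathcal F(\eta^s)<\mathcal F(\eta^\infty)$ for small $s>0$, which is a contradiction. Therefore $\eta^\infty\in\Dint$.

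For the gap inequalities, I would work on $\Dint$, where $\mathcal F$ is differentiable with intrinsic gradient $\Pi_{\T}\psi(\eta)$. This holds because $\partial_i E=\log\eta_i+1$ and the additive constant disappears under projection. Strong convexity on the convex set $\Dint$, with $\eta,\eta^\infty\in\Dint$, gives
\[
\mathcal F(\eta^\infty)\ge\mathcal F(\eta)+\Pi_{\T}\psi(\eta)\cdot(\eta^\infty-\eta)+\tfrac\mu2\|\eta^\infty-\eta\|_2^2 .
\]
Minimizing the right-hand side over the displacement $y=\eta^\infty-\eta\in\T$ yields the lower bound $\mathcal F(\eta)-\frac1{2\mu}\|\Pi_{\T}\psi\|_2^2$. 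This unconstrained minimization over $\T$ only weakens the inequality, so it is legitimate, and it gives \eqref{eq:projected-pl}.

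For \eqref{eq:strong-convexity-gap}, I apply strong convexity at the interior minimizer instead. The first-order condition $\Pi_{\T}\psi(\eta^\infty)=0$ holds because $\eta^\infty$ is an interior minimizer on the affine hull. With the linear term vanishing, the quadratic bound is exactly the claimed gap estimate.

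The only delicate point is justifying the use of first-order information. Specifically, one must check that strong convexity in the sense above implies the gradient inequality at interior points where $\mathcal F$ is differentiable. This is standard once $\mathcal F$ is differentiable on $\Dint$, which is guaranteed by $V\in\mathcal C^{1,1}_{\mathrm{loc}}(\Dint)$. I expect the interiority argument to be the main step needing care: it is where boundedness of the intrinsic gradient of $V$ up to the boundary is used. The semiconvexity inequality alone already supplies a one-sided bound in the direction needed, so this step could also be done without \Cref{thm:gibbs}.
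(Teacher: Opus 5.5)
Your proposal is correct and follows the paper's proof essentially step for step. It combines $1$-strong convexity of the entropy with $K$-semiconvexity of $V$, uses a segment toward $d^{-1}\one$ to rule out boundary minimizers, minimizes the strong-convexity gradient inequality at $\eta$ over $\T$ to obtain \eqref{eq:projected-pl}, and applies the same inequality at the interior minimizer to obtain \eqref{eq:strong-convexity-gap}. The only difference is in the interiority step. You mainly invoke the bounded extended gradient from \Cref{thm:gibbs}, whereas the paper uses the alternative you mention: convexity of $V+\Phi_K$ already gives the required $O(t)$ upper bound on $V(\eta_t)-V(\bar\eta)$.
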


\begin{proof}
On $\Dint$, the entropy $S(\eta)=\sum_i\eta_i\log\eta_i$ has Hessian $\operatorname{diag}(1/\eta_i)$. Since $\eta_i\le1$, its quadratic form on $\T$ dominates the Euclidean norm squared. Thus $S$ is $1$-strongly convex on the affine simplex. Since $V$ is $K$-semiconvex by \Cref{prop:semiconvexity}, the function $2\beta V$ is $2\beta K$-semiconvex, so $\mathcal F$ is $\mu$-strongly convex. Continuity on the compact simplex gives a minimizer and strong convexity gives uniqueness.

The minimizer cannot lie on the boundary. Suppose $\bar\eta\in\partial\D$, let $Z:=\{i:\bar\eta_i=0\}\ne\varnothing$, put $u=(1/d,\ldots,1/d)$, and set $\eta_t=(1-t)\bar\eta+tu$. Convexity of $V+\Phi_K$ gives an $O(t)$ upper bound for $V(\eta_t)-V(\bar\eta)$ as $t\downarrow0$, because the quadratic correction also changes by $O(t)$. For the entropy,
\[
S(\eta_t)-S(\bar\eta)
=\frac{|Z|t}{d}\log\frac{t}{d}+O(t).
\]
The negative $t\log t$ contribution dominates the $O(t)$ terms, so $\mathcal F(\eta_t)<\mathcal F(\bar\eta)$ for all sufficiently small $t>0$, contradicting minimality.

For $h\in\T$, $D\mathcal F(\eta)[h]=\psi(\eta)\cdot h=\Pi_{\T}\psi(\eta)\cdot h$ by \eqref{eq:tangent-projection}. Strong convexity at $\eta$ with comparison point $\eta^\infty$ gives
\[
\mathcal F(\eta)-\mathcal F(\eta^\infty)
\le
\Pi_{\T}\psi(\eta)\cdot(\eta-\eta^\infty)
-\frac\mu2\|\eta-\eta^\infty\|_2^2.
\]
Cauchy--Schwarz and $ar-\mu r^2/2\le a^2/(2\mu)$ prove \eqref{eq:projected-pl}. Applying strong convexity at the minimizer, whose tangential derivative vanishes, gives \eqref{eq:strong-convexity-gap}.
\end{proof}

\begin{lemma}[Complete-graph identity]
For every $z\in\R^d$,
\begin{equation}\label{eq:complete-graph}
\sum_{i\ne j}(z_j-z_i)^2
=
2d\|\Pi_{\T}z\|_2^2.
\end{equation}
\end{lemma}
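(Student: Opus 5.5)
The plan is to expand the left-hand side directly and then recognize the projection onto $\T$. Fix $z\in\R^d$, put $\bar z:=\frac1d\sum_{i=1}^d z_i$, and note that by \eqref{eq:tangent-projection} the vector $\Pi_{\T}z$ has components $z_i-\bar z$.

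The first step uses the invariance of the left-hand side under $z\mapsto z+c\one$: each difference $z_j-z_i$ is unchanged. We may therefore replace $z$ by $y:=\Pi_{\T}z$ and assume $\sum_i y_i=0$.

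The second step expands the sum. The diagonal terms $i=j$ vanish, so the sum over $i\ne j$ equals the full double sum over $i,j\in[d]$:
\[
\sum_{i,j=1}^d (y_j-y_i)^2
=
\sum_{i,j}\bigl(y_j^2+y_i^2\bigr)-2\sum_{i,j}y_iy_j
=
2d\|y\|_2^2-2\Bigl(\sum_i y_i\Bigr)^2 .
\]

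The third step substitutes $\sum_i y_i=0$, which gives $2d\|y\|_2^2=2d\|\Pi_{\T}z\|_2^2$. This is \eqref{eq:complete-graph}.

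There is no real obstacle here. The only point needing care is the bookkeeping of ordered versus unordered pairs. The sum is over ordered pairs, each unordered edge appears twice, and this produces the factor $2d$ rather than $d$.
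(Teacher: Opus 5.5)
Your proposal is correct and is essentially the paper's argument: both expand the double sum to get $2d\sum_i z_i^2-2\bigl(\sum_i z_i\bigr)^2$. The only difference is cosmetic. You first center $z$ using invariance under $z\mapsto z+c\one$, whereas the paper expands for general $z$ and matches the result with $\|\Pi_{\T}z\|_2^2=\sum_i z_i^2-d^{-1}\bigl(\sum_i z_i\bigr)^2$.
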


\begin{proof}
Expanding the left-hand side gives $2d\sum_i z_i^2-2(\sum_i z_i)^2$, while \eqref{eq:tangent-projection} gives $\|\Pi_{\T}z\|_2^2=\sum_i z_i^2-d^{-1}(\sum_i z_i)^2$. Multiplication by $2d$ proves \eqref{eq:complete-graph}.
\end{proof}

\begin{proof}[Proof of \Cref{thm:convergence}]
Write $w_{ij}:=\eta_i a^*_{ij}$ and $x_{ij}:=\psi_j-\psi_i$. By \eqref{eq:gibbs-flux},
$w_{ij}=q_{ij}(\eta)\sqrt{\eta_i\eta_j}\,e^{-x_{ij}/2}$.
Since $\sum_i\dot\eta_i=0$, the constant $1$ in the derivative of $\eta_i\log\eta_i$ does not contribute, and therefore
\[
\frac{\dd}{\dd t}\mathcal F(\eta(t))
=
\frac12\sum_{i\ne j}x_{ij}(w_{ij}-w_{ji}).
\]
By \Cref{ass:symmetry}, $q_{ij}(\eta)=q_{ji}(\eta)$, so $w_{ji}=q_{ij}(\eta)\sqrt{\eta_i\eta_j}\,e^{x_{ij}/2}$. Substitution gives \eqref{eq:dissipation-identity}; nonnegativity follows from the elementary inequality stated after \eqref{eq:dissipation}.

We next identify the equilibria. Since $\eta^\infty\in\Dint$ minimizes $\mathcal F$, its tangential first-order condition is $\Pi_{\T}\psi(\eta^\infty)=0$. Hence the two fluxes on every unoriented edge agree, so $\eta^\infty$ is an equilibrium. Conversely, if $\bar\eta\in\Dint$ is an equilibrium, its constant trajectory has zero free-energy derivative. By \eqref{eq:dissipation-identity}, $\mathcal D(\bar\eta)=0$. Every edge weight is strictly positive, so all chemical potentials are equal and $\Pi_{\T}\psi(\bar\eta)=0$. Strong convexity then implies $\bar\eta=\eta^\infty$.

We prove a global entropy-production inequality in two regions. First suppose $\eta_i\ge\delta$ for every $i$. Since $x(e^{x/2}-e^{-x/2})\ge x^2$, \eqref{eq:dissipation}, the bound $q_{ij}(\eta)\ge q_{\min}$, and \eqref{eq:complete-graph} give
\[
\mathcal D(\eta)
\ge
dq_{\min}\delta\|\Pi_{\T}\psi(\eta)\|_2^2.
\]
By \eqref{eq:projected-pl}, this is at least $2\mu d q_{\min}\delta\bigl(\mathcal F(\eta)-\mathcal F(\eta^\infty)\bigr)$.

It remains to control a boundary neighborhood. Put $M_V=\osc_{\D}V$ and
\[
\delta_{\partial}
:=
\frac1d\exp\bigl(-1-2\beta(dM_V+K)\bigr).
\]
Suppose $\varepsilon:=\min_i\eta_i\le\delta_{\partial}$, choose $i$ with $\eta_i=\varepsilon$, and choose $j$ with $\eta_j\ge1/d$. Applying global $K$-semiconvexity to the feasible point $\eta+\eta_j(e_i-e_j)$ gives
\[
\partial_iV(\eta)-\partial_jV(\eta)
\le
dM_V+K.
\]
Hence, with $x=\psi_j(\eta)-\psi_i(\eta)$,
\[
x
\ge
\log\frac1{d\varepsilon}-2\beta(dM_V+K)
\ge1.
\]
Keeping only the nonnegative $(i,j)$ term in \eqref{eq:dissipation} and using $x(e^{x/2}-e^{-x/2})\ge(1-e^{-1})e^{x/2}$ gives
\[
\mathcal D(\eta)
\ge
\frac{q_{\min}(1-e^{-1})}{2d}
e^{-\beta(dM_V+K)}
=:c_{\partial}>0.
\]
Since $-\log d\le\sum_i\eta_i\log\eta_i\le0$, the free-energy gap is bounded above by $\log d+2\beta M_V$. Thus
\[
\mathcal D(\eta)
\ge
\lambda_{\partial}\bigl(\mathcal F(\eta)-\mathcal F(\eta^\infty)\bigr),
\qquad
\lambda_{\partial}:=\frac{c_{\partial}}{\log d+2\beta M_V}>0.
\]
{\color{black}Equivalently, the two admissible decay constants are
\begin{equation}\label{eq:stability-rate-constants}
\begin{aligned}
\lambda_{\mathrm{int}}
 &=2(1-2\beta K)q_{\min}e^{-1-2\beta(dM_V+K)},\\
\lambda_{\partial}
 &=\frac{q_{\min}(1-e^{-1})e^{-\beta(dM_V+K)}}
 {2d(\log d+2\beta M_V)}.
\end{aligned}
\end{equation}
Taking $\lambda=\min\{\lambda_{\mathrm{int}},\lambda_{\partial}\}$
gives the entropy-production inequality throughout $\Dint$.} Combining it with \eqref{eq:dissipation-identity} and Gronwall's inequality gives \eqref{eq:free-energy-decay}. Finally, \eqref{eq:strong-convexity-gap} and \eqref{eq:free-energy-decay} give \eqref{eq:state-decay}. This proves the theorem.
\end{proof}

\subsection{Proof of \Cref{thm:uniform-small-beta-stability}: Uniform exponential convergence for small inverse temperature}

\begin{proof}
Use $M$ from \eqref{eq:temperature-value-bounds}. Choose
$\beta_*\le\beta_0$ sufficiently small that $d\beta M\le1/2$
for $0<\beta\le\beta_*$. The assumed margin gives
$\mu_\beta:=1-2\beta K_\beta\ge\varepsilon$ and
$\theta_\beta:=\beta(dM+K_\beta)\le1$. In particular,
\Cref{thm:convergence} applies at each such $\beta$ and supplies
the unique equilibrium, strong convexity, and the dissipation identity.

It remains only to make its constants uniform. In
\eqref{eq:stability-rate-constants}, replace $M_V$ by its uniform
upper bound $M$. The interior constant is at least
$2\varepsilon q_{\min}e^{-3}$, while the boundary constant is at
least $q_{\min}(1-e^{-1})e^{-1}/[2d(\log d+1)]$.
Their minimum is the stated $\lambda_*$, which proves
\eqref{eq:uniform-small-beta-entropy-production} without repeating
the two-region argument.

Write $G_\beta(\eta)=\mathcal F_\beta(\eta)
-\mathcal F_\beta(\eta_\beta^\infty)$. The dissipation identity
gives $\frac{\dd}{\dd t}G_\beta\le-\lambda_*G_\beta$ along the
trajectory, so Gronwall proves \eqref{eq:uniform-small-beta-energy-decay}.
Finally, the entropy range and \eqref{eq:strong-convexity-gap} give
\[
\frac{\varepsilon}{2}\|\eta-\eta_\beta^\infty\|_2^2
\le G_\beta(\eta)\le\log d+2\beta M\le\log d+1.
\]
Combining these bounds with the energy decay and taking square roots
proves \eqref{eq:uniform-small-beta-state-decay} with the stated $B_*$.
\end{proof}

\subsection{Proof of \Cref{cor:beta-stability-limits}: Extreme-temperature limits of the stable equilibria}

\begin{proof}
Write $S(\eta)=\sum_i\eta_i\log\eta_i$. For the small-$\beta$
conclusion, compare the free energy at its minimizer with its value
at $u$. Since $S(u)=-\log d$, this gives
\[
\KL(\eta_\beta^\infty\Vert u)
=S(\eta_\beta^\infty)-S(u)
\le2\beta\bigl(V_\beta(u)-V_\beta(\eta_\beta^\infty)\bigr).
\]
The oscillation bound \eqref{eq:temperature-value-bounds} and
Pinsker's inequality prove the two asserted equilibrium estimates.
The common exponential rate follows directly from
\Cref{thm:uniform-small-beta-stability}.

For the reference dynamics, put $c_0=q_{\min}+(d-1)q_{\max}$.
The inequality $\dot\eta_i^q\ge q_{\min}-c_0\eta_i^q$ implies
strict positivity of every coordinate for $t>0$, even from a
boundary initial state. Symmetry and pairing of the directed edges give
\[
-\frac{\dd}{\dd t}\KL(\eta^q(t)\Vert u)
=\sum_{i<j}q_{ij}(\eta^q(t))
(\eta_i^q(t)-\eta_j^q(t))
(\log\eta_i^q(t)-\log\eta_j^q(t))
\ge dq_{\min}\KL(\eta^q(t)\Vert u).
\]
For the last inequality, replace each rate by $q_{\min}$, expand
the pair sum as $dS(\eta^q)-\sum_i\log\eta_i^q$, and use
$\sum_i\log\eta_i^q\le-d\log d$.
Apply Gronwall on $[\varepsilon,t]$ and then let
$\varepsilon\downarrow0$, using continuity of relative entropy on
the closed simplex. Symmetry makes $u$ an equilibrium, strictly
positive inward drift excludes boundary equilibria, and zero
entropy dissipation forces any interior equilibrium to equal $u$.

For the large-$\beta$ conclusion, choose $\bar\eta\in\mathcal M$.
The zero-rate control keeps the population there, giving
$V_\beta(\bar\eta)\le C_{\min}/\rho+Q/(\beta\rho)$.
Comparison of the free energy at $\eta_\beta^\infty$ and $\bar\eta$
then yields
\begin{equation}\label{eq:equilibrium-value-excess}
2\beta\left(V_\beta(\eta_\beta^\infty)-\frac{C_{\min}}\rho\right)
\le S(\bar\eta)-S(\eta_\beta^\infty)+\frac{2Q}\rho
\le\log d+\frac{2Q}\rho.
\end{equation}
At the equilibrium, stationarity of the free energy gives
$\beta(\partial_jV_\beta-\partial_iV_\beta)
=\frac12\log(\eta_{\beta,i}^\infty/\eta_{\beta,j}^\infty)$.
Insert this into the Gibbs formula and pair the symmetric terms in
the HJB equation to obtain
\[
\rho V_\beta(\eta_\beta^\infty)
=C(\eta_\beta^\infty)
+\frac1\beta\sum_{i<j}q_{ij}(\eta_\beta^\infty)
\left(\sqrt{\eta_{\beta,i}^\infty}
-\sqrt{\eta_{\beta,j}^\infty}\right)^2.
\]
The last term is nonnegative. Thus
$C(\eta_\beta^\infty)-C_{\min}$ is bounded by
$\rho V_\beta(\eta_\beta^\infty)-C_{\min}$, and
\eqref{eq:equilibrium-value-excess} proves the stated cost bound.
Continuity of $C$ and compactness of $\D$ imply convergence in
distance to $\mathcal M$: otherwise a subsequence staying a positive
distance away would have a limit that both minimizes $C$ and lies
outside $\mathcal M$. This also gives the unique-minimizer conclusion.
\end{proof}

\section{Proof of \Cref{thm:planner-to-MFG}: The potential-MFG correspondence}\label{app:potential-mfg}

Let $p(t):=D_{\T}V(\eta^*(t))$, represented as a column vector, and
retain the time-dependent rates $\alpha^*(t)$ from the theorem.
By \Cref{thm:gibbs}, the intrinsic gradient is uniformly bounded on
$\D$ and locally Lipschitz in the interior. Since $\eta^*$ is locally
absolutely continuous and stays in a compact subset of $\Dint$ on
each finite time interval, $p$ is bounded and locally absolutely
continuous. The rates $\alpha^*$ are uniformly bounded by
\eqref{eq:global-rate-bounds}.

{\color{black}
These bounds provide the regularity needed for a first-order DPP
sensitivity argument along the optimal planner trajectory $\eta^*$.
Since the available regularity of $V$ is $\mathcal{C}^{1,1}$, we derive
the adjoint equation by differentiating the perturbed DPP once with
respect to the initial state, while holding the open-loop rates
$\alpha^*(s)=a^*(\eta^*(s))$ fixed. We then differentiate the resulting
integral representation of $p$ in time. This procedure uses only first
derivatives of $V$ and avoids evaluating its Hessian along $\eta^*$.
}

{\color{black}
The proof has three steps: derive the adjoint equation by sensitivity,
add a scalar correction to recover the individual Bellman equation,
and verify optimality and consistency with the population dynamics.
}

\medskip
\noindent\textcolor{black}{\textbf{Step 1: Sensitivity and the adjoint equation.}}
Fix $0\le t<T$ and let $P(t,s)$ be the stochastic evolution family
generated by $\alpha^*$, as in \eqref{eq:evolution-family}.
In addition to $P(t,s)\one=\one$, it satisfies the backward equation
\begin{equation}\label{eq:planner-transition-backward}
\partial_tP(t,s)=-\alpha^*(t)P(t,s).
\end{equation}
For a tangent row vector $h\in\T$ and sufficiently small
$|\varepsilon|$, apply these same open-loop rates from
$\eta^*(t)+\varepsilon h$. They remain admissible by
\Cref{def:admissible}, and linearity of the state equation gives
\begin{equation}\label{eq:perturbed-path-MFG}
\eta^\varepsilon(s)
=(\eta^*(t)+\varepsilon h)P(t,s)
=\eta^*(s)+\varepsilon hP(t,s),\qquad t\le s\le T.
\end{equation}
The DPP \eqref{eq:dpp} implies
\begin{equation}\label{eq:perturbed-DPP-MFG-proof}
V(\eta^*(t)+\varepsilon h)
\le
\int_t^T e^{-\rho(s-t)}f(\eta^\varepsilon(s),\alpha^*(s))\,ds
+e^{-\rho(T-t)}V(\eta^\varepsilon(T)),
\end{equation}
with equality at $\varepsilon=0$ by optimality of the planner's
continuation. Both signs of $\varepsilon$ are allowed. Hence the
right-hand side minus the left-hand side has a differentiable local
minimum at zero, and its derivative there vanishes. Differentiation
under the integral is justified by the bounded rates, the constant
reference generator, and the bounded first derivatives of $C$.

At fixed rates, the running-cost derivative in a tangent row direction
$z$ is $D_\eta f(\eta^*(s),\alpha^*(s))[z]=zg(s)$, where
\begin{equation*}
g_i(s):=c(i,\eta^*(s))
+\frac1\beta\KL(\alpha_i^*(s)\Vert\bar q_i).
\end{equation*}
Here we used \eqref{eq:constant-reference} and
\eqref{eq:potential-condition}. Differentiating
\eqref{eq:perturbed-DPP-MFG-proof} at zero and using
\eqref{eq:perturbed-path-MFG} therefore yields the sensitivity identity
\begin{equation}\label{eq:DPP-sensitivity-MFG}
\begin{gathered}
hp(t)=hY_T(t)\quad(h\in\T),\\
Y_T(t):=\int_t^T e^{-\rho(s-t)}P(t,s)g(s)\,ds
+e^{-\rho(T-t)}P(t,T)p(T).
\end{gathered}
\end{equation}
Since $p(t)\in\T$, this identity gives $p(t)=\Pi_{\T}Y_T(t)$.

Differentiate the expression for $Y_T$ in \mbox{\eqref{eq:DPP-sensitivity-MFG}} with respect to $t$, keeping $T$ fixed. Leibniz's rule and
\eqref{eq:planner-transition-backward} give
$\dot Y_T=\rho Y_T-\alpha^*Y_T-g$.
Moreover, $Y_T-p$ is a multiple of $\one$ and
$\alpha^*\one=0$. Projection onto $\T$ consequently gives
$\dot p=\rho p-\Pi_{\T}(\alpha^*p+g)$ almost everywhere.
By \eqref{eq:gibbs-feedback}, the row $\alpha_i^*(t)$ minimizes
\eqref{eq:MFG-row-Hamiltonian} at $z=p(t)$, so
$(\alpha^*p)_i+g_i=c(i,\eta^*)+H_i^{\mathrm{MFG}}(p)$.
Expanding the projection \eqref{eq:tangent-projection} yields
\begin{equation}\label{eq:p-equation-with-kappa}
\dot p_i(t)=\rho p_i(t)-c(i,\eta^*(t))
-H_i^{\mathrm{MFG}}(p(t))+\kappa(t),\qquad i\in[d],
\end{equation}
where
\begin{equation}\label{eq:kappa-formula}
\kappa(t):=\frac1d\sum_{i=1}^d
\bigl[c(i,\eta^*(t))+H_i^{\mathrm{MFG}}(p(t))\bigr].
\end{equation}
The same scalar occurs in every component because the orthogonal
complement of $\T$ is $\operatorname{span}\{\one\}$; equivalently,
summing \eqref{eq:p-equation-with-kappa} and using
$\sum_i p_i=0$ gives \eqref{eq:kappa-formula}.
Since $p$ and $c$ are bounded, the explicit row Hamiltonian
\eqref{eq:MFG-row-Hamiltonian-explicit} shows that $\kappa$ is bounded.

\medskip
\noindent\textcolor{black}{\textbf{Step 2: The scalar correction.}}
Define
\begin{equation}\label{eq:MFG-value-from-planner-gradient}
\gamma(t):=\int_t^\infty e^{-\rho(s-t)}\kappa(s)\,ds,
\qquad v^*(t):=p(t)+\gamma(t)\one.
\end{equation}
Boundedness of $\kappa$ gives
$|\gamma(t)|\le\|\kappa\|_\infty/\rho$ and
$\dot\gamma=\rho\gamma-\kappa$ almost everywhere. Thus $v^*$ is bounded
and locally absolutely continuous. The common shift preserves
component differences, proving \eqref{eq:value-gradient-differences},
and leaves the row Hamiltonians unchanged. Substituting
\eqref{eq:MFG-value-from-planner-gradient} into
\eqref{eq:p-equation-with-kappa} therefore gives precisely
\eqref{eq:MFG-Bellman} with $m=\eta^*$ and $v^m=v^*$.
In particular, \eqref{eq:MFG-row-minimizer} and
\eqref{eq:value-gradient-differences} identify the individual Gibbs
selector $\widehat\alpha(v^*(t))$ with the already defined
$\alpha^*(t)=a^*(\eta^*(t))$.

\medskip
\noindent\textcolor{black}{\textbf{Step 3: Verification and consistency.}}
Let $\alpha$ be any admissible individual control. Write its running
cost as $L_i^\alpha(s):=c(i,\eta^*(s))+
\beta^{-1}\KL(\alpha_i(s)\Vert\bar q_i)$.
Hamiltonian minimization \eqref{eq:MFG-row-Hamiltonian}, combined with
the Bellman equation just established, gives
\begin{equation}\label{eq:MFG-verification-inequality-pointwise}
\dot v_i^*(s)-\rho v_i^*(s)
+(\alpha(s)v^*(s))_i+L_i^\alpha(s)\ge0
\end{equation}
for almost every $s$, with equality when $\alpha=\alpha^*$.
For a chain $X$ with generator $\alpha$ and $X_t=i$, discounted
Dynkin's formula and \eqref{eq:MFG-verification-inequality-pointwise}
yield
\begin{equation}\label{eq:Dynkin-MFG-proof}
\begin{aligned}
&\mathbb E_{t,i}^{\alpha}
 [e^{-\rho(T-t)}v_{X_T}^*(T)]-v_i^*(t)\\
&\quad=\mathbb E_{t,i}^{\alpha}\int_t^T e^{-\rho(s-t)}
 [\dot v_{X_s}^*(s)-\rho v_{X_s}^*(s)
 +(\alpha(s)v^*(s))_{X_s}]\,ds\\
&\quad\ge-\mathbb E_{t,i}^{\alpha}\int_t^T
 e^{-\rho(s-t)}L_{X_s}^\alpha(s)\,ds.
\end{aligned}
\end{equation}
This identity is valid for locally integrable rates: the state
probabilities $\mu(s)$ are absolutely continuous and solve
$\dot\mu=\mu\alpha$, so the equality follows by applying the product
rule to $e^{-\rho(s-t)}\mu(s)v^*(s)$ and integrating.
All terms in that product rule are integrable on finite intervals.
Controls of infinite cost satisfy the required lower bound trivially;
for finite-cost controls, the running-cost integral is also finite on
each such interval.

The terminal expectation in \eqref{eq:Dynkin-MFG-proof} tends to zero
as $T\to\infty$, because $v^*$ is bounded. The running cost has
bounded negative part, so passage to the infinite horizon gives
\begin{equation}\label{eq:MFG-verified-value}
\begin{gathered}
v_i^*(t)\le J_i^{\mathrm{MFG}}(t;\eta^*,\alpha)
\quad\text{for every admissible }\alpha,\\
v_i^*(t)=J_i^{\mathrm{MFG}}(t;\eta^*,\alpha^*).
\end{gathered}
\end{equation}
The equality follows by using the bounded admissible Gibbs control,
for which the inequality in \eqref{eq:Dynkin-MFG-proof} is an equality
and the running cost is bounded. Thus \eqref{eq:MFG-verified-value}
identifies $v^*$ as the representative-agent value.
Any other bounded locally absolutely continuous solution of
\eqref{eq:MFG-Bellman} has bounded Gibbs rates, and the same
verification identifies it with this value. This proves uniqueness
of the bounded Bellman solution.

Strict convexity in \eqref{eq:MFG-row-Hamiltonian} gives a unique
minimizing row at every time and state. To see uniqueness among
optimal time-dependent controls, apply the verification with the
strictly positive initial law $\eta_0$. The resulting nonnegative
Hamiltonian gaps have zero integral for an optimal control. Every
state probability remains positive at finite times by
\eqref{eq:interior-preservation}, so each gap vanishes almost
everywhere and the rates agree with the Gibbs selector almost
everywhere.
Finally, the planner state equation \eqref{eq:state-equation}, with
rates $a^*(\eta^*(t))=\alpha^*(t)$, is exactly
\eqref{eq:MFG-consistency} with $m=\eta^*$ and the prescribed initial
law. Hence $(\eta^*,v^*,\alpha^*)$ is a discounted finite-state MFG
equilibrium.
\hfill\qed

\bibliographystyle{abbrv}
\bibliography{references}

\begin{thebibliography}{10}

\bibitem{AtarCohen2016}
R.~Atar and A.~Cohen.
\newblock A differential game for a multiclass queueing model in the
  moderate-deviation heavy-traffic regime.
\newblock {\em Math. Oper. Res.}, 41(4):1354--1380, 2016.

\bibitem{AtarCohen2017}
R.~Atar and A.~Cohen.
\newblock Asymptotically optimal control for a multiclass queueing model in the
  moderate deviation heavy traffic regime.
\newblock {\em Ann. Appl. Probab.}, 27(5):2862--2906, 2017.

\bibitem{mmAS2008}
M.~Avellaneda and S.~Stoikov.
\newblock High-frequency trading in a limit order book.
\newblock {\em Quantitative Finance}, 8(3):217--224, 2008.

\bibitem{mmBBP2023}
B.~Baldacci, P.~Bergault, and D.~Possama\"i.
\newblock A mean-field game of market-making against strategic traders.
\newblock {\em SIAM Journal on Financial Mathematics}, 2023.

\bibitem{mmBBG2023}
A.~Barzykin, P.~Bergault, and O.~Gu\'eant.
\newblock Market making by an {FX} dealer: tiers, pricing ladders and hedging
  rates for optimal risk control.
\newblock \url{https://arxiv.org/abs/2112.02269v3}, 2023.
\newblock arXiv:2112.02269, version 3.

\bibitem{BayraktarCohen2018}
E.~Bayraktar and A.~Cohen.
\newblock Analysis of a finite state many player game using its master
  equation.
\newblock {\em SIAM Journal on Control and Optimization}, 56(5):3538--3568,
  2018.

\bibitem{BayraktarHernandezYanZhu2026PhiBE}
E.~Bayraktar, M.~Hernandez, Q.~Yan, and Y.~Zhu.
\newblock Mean-field {PhiBE}: Continuous-time mean-field reinforcement learning
  from discrete-time data, 2026.

\bibitem{BayraktarHernandezYanZhu2026PolicyGradient}
E.~Bayraktar, M.~Hernandez, Q.~Yan, and Y.~Zhu.
\newblock Policy gradient for continuous-time mean-field control, 2026.

\bibitem{BensoussanFrehseYam2013}
A.~Bensoussan, J.~Frehse, and P.~Yam.
\newblock {\em Mean Field Games and Mean Field Type Control Theory}.
\newblock SpringerBriefs in Mathematics. Springer, New York, 2013.

\bibitem{mmBG2021}
P.~Bergault and O.~Gu\'eant.
\newblock Size matters for {OTC} market makers: General results and
  dimensionality reduction techniques.
\newblock {\em Mathematical Finance}, 31(1):279--322, 2021.

\bibitem{BudhirajaDupuisFischerRamanan2015}
A.~Budhiraja, P.~Dupuis, M.~Fischer, and K.~Ramanan.
\newblock Local stability of kolmogorov forward equations for finite state
  nonlinear markov processes.
\newblock {\em Electronic Journal of Probability}, 20(81):1--30, 2015.

\bibitem{CannarsaSinestrari2004}
P.~Cannarsa and C.~Sinestrari.
\newblock {\em Semiconcave Functions, Hamilton--Jacobi Equations, and Optimal
  Control}, volume~58 of {\em Progress in Nonlinear Differential Equations and
  Their Applications}.
\newblock Birkh{\"a}user, Boston, 2004.

\bibitem{CarmonaDelarue2018I}
R.~Carmona and F.~Delarue.
\newblock {\em Probabilistic Theory of Mean Field Games with Applications I:
  Mean Field FBSDEs, Control, and Games}, volume~83 of {\em Probability Theory
  and Stochastic Modelling}.
\newblock Springer, Cham, 2018.

\bibitem{CarmonaDelarue2018II}
R.~Carmona and F.~Delarue.
\newblock {\em Probabilistic Theory of Mean Field Games with Applications II:
  Mean Field Games with Common Noise and Master Equations}, volume~84 of {\em
  Probability Theory and Stochastic Modelling}.
\newblock Springer, Cham, 2018.

\bibitem{CarmonaWang2021}
R.~Carmona and P.~Wang.
\newblock A probabilistic approach to extended finite state mean field games.
\newblock {\em Mathematics of Operations Research}, 46(2):471--502, 2021.

\bibitem{mmCJR2014}
{\'A}.~Cartea, S.~Jaimungal, and J.~Ricci.
\newblock Buy low, sell high: A high frequency trading perspective.
\newblock {\em SIAM Journal on Financial Mathematics}, 5(1):415--444, 2014.

\bibitem{Cecchin2021}
A.~Cecchin.
\newblock Finite state {$N$}-agent and mean field control problems.
\newblock {\em ESAIM: Control, Optimisation and Calculus of Variations}, 27:31,
  2021.

\bibitem{CecchinFischer2020}
A.~Cecchin and M.~Fischer.
\newblock Probabilistic approach to finite state mean field games.
\newblock {\em Applied Mathematics \& Optimization}, 81(2):253--300, 2020.

\bibitem{CecchinPelino2019}
A.~Cecchin and G.~Pelino.
\newblock Convergence, fluctuations and large deviations for finite state mean
  field games via the master equation.
\newblock {\em Stochastic Processes and their Applications},
  129(11):4510--4555, 2019.

\bibitem{ChakrabortyCohenYoung2023}
P.~Chakraborty, A.~Cohen, and V.~R. Young.
\newblock Optimal {D}ividends {U}nder {M}odel {U}ncertainty.
\newblock {\em SIAM J. Financial Math.}, 14(2):497--524, 2023.

\bibitem{Cohen2019QueueingUncertainty}
A.~Cohen.
\newblock Asymptotic analysis of a multiclass queueing control problem under
  heavy traffic with model uncertainty.
\newblock {\em Stoch. Syst.}, 9(4):359--391, 2019.

\bibitem{Cohen2019BrownianUncertainty}
A.~Cohen.
\newblock Brownian control problems for a multiclass {M}/{M}/1 queueing problem
  with model uncertainty.
\newblock {\em Math. Oper. Res.}, 44(2):739--766, 2019.

\bibitem{CohenHeningSun2022}
A.~Cohen, A.~Hening, and C.~Sun.
\newblock Optimal ergodic harvesting under ambiguity.
\newblock {\em SIAM J. Control Optim.}, 60(2):1039--1063, 2022.

\bibitem{CohenHuffman2026}
A.~Cohen and E.~Huffman.
\newblock Uniform-in-time convergence rates to a nonlinear markov chain for
  mean-field interacting jump processes.
\newblock {\em SIAM Journal on Control and Optimization}, 64(2):593--623, 2026.

\bibitem{CohenSun2026}
A.~Cohen and C.~Sun.
\newblock Existence of {O}ptimal {S}tationary {S}ingular {C}ontrols and {M}ean
  {F}ield {G}ame {E}quilibria.
\newblock {\em Math. Oper. Res.}, 51(3):2041--2072, 2026.

\bibitem{DelarueLavigne2026RobustMFC}
F.~Delarue and P.~Lavigne.
\newblock Robust mean field control: Stochastic maximum principle and
  variational mean field games, 2026.

\bibitem{DelarueLavigne2026RobustMFG}
F.~Delarue and P.~Lavigne.
\newblock Robust mean-field games under entropy-based uncertainty, 2026.

\bibitem{mmDL2025}
R.~Donnelly and Z.~Li.
\newblock Dynamic inventory management with mean-field competition.
\newblock {\em Applied Mathematical Finance}, 2025.

\bibitem{ErbarFathiLaschosSchlichting2016}
M.~Erbar, M.~Fathi, V.~Laschos, and A.~Schlichting.
\newblock Gradient flow structure for {McKean--Vlasov} equations on discrete
  spaces.
\newblock {\em Discrete and Continuous Dynamical Systems}, 36(12):6799--6833,
  2016.

\bibitem{ErbarFathiSchlichting2020}
M.~Erbar, M.~Fathi, and A.~Schlichting.
\newblock Entropic curvature and convergence to equilibrium for mean-field
  dynamics on discrete spaces.
\newblock {\em ALEA, Latin American Journal of Probability and Mathematical
  Statistics}, 17:445--471, 2020.

\bibitem{mmFL2013}
P.~Fodra and M.~Labadie.
\newblock High-frequency market-making for multi-dimensional {Markov}
  processes.
\newblock \url{https://arxiv.org/abs/1303.7177}, 2013.

\bibitem{FrikhaGermainLaurierePhamSong2025}
N.~Frikha, M.~Germain, M.~Lauri{\`e}re, H.~Pham, and X.~Song.
\newblock Actor-critic learning for mean-field control in continuous time.
\newblock {\em Journal of Machine Learning Research}, 26(127):1--42, 2025.

\bibitem{FrikhaPhamSong2024}
N.~Frikha, H.~Pham, and X.~Song.
\newblock Full error analysis of policy gradient learning algorithms for
  exploratory linear quadratic mean-field control problem in continuous time
  with common noise, 2024.

\bibitem{GomesMohrSouza2013}
D.~A. Gomes, J.~Mohr, and R.~R. Souza.
\newblock Continuous time finite state mean field games.
\newblock {\em Applied Mathematics \& Optimization}, 68(1):99--143, 2013.

\bibitem{mmG2017}
O.~Gu\'eant.
\newblock Optimal market making.
\newblock {\em Applied Mathematical Finance}, 24(2):112--154, 2017.

\bibitem{mmGLFT2013}
O.~Gu\'eant, C.-A. Lehalle, and J.~Fernandez-Tapia.
\newblock Dealing with the inventory risk: a solution to the market making
  problem.
\newblock {\em Mathematics and Financial Economics}, 7:477--507, 2013.

\bibitem{mmGP2013}
F.~Guilbaud and H.~Pham.
\newblock Optimal high-frequency trading with limit and market orders.
\newblock {\em Quantitative Finance}, 13(1):79--94, 2013.

\bibitem{GuoXuZariphopoulou2022}
X.~Guo, R.~Xu, and T.~Zariphopoulou.
\newblock Entropy regularization for mean field games with learning.
\newblock {\em Mathematics of Operations Research}, 47(4):3239--3260, 2022.

\bibitem{mmHS1981}
T.~Ho and H.~R. Stoll.
\newblock Optimal dealer pricing under transactions and return uncertainty.
\newblock {\em Journal of Financial Economics}, 9(1):47--73, 1981.

\bibitem{HuangCainesMalhame2006}
M.~Huang, P.~E. Caines, and R.~P. Malham{\'e}.
\newblock Large-population stochastic dynamic games: Closed-loop
  {McKean--Vlasov} systems and the nash certainty equivalence principle.
\newblock {\em Communications in Information and Systems}, 6(3):221--252, 2006.

\bibitem{KesidisWalrand1993}
G.~Kesidis and J.~Walrand.
\newblock Relative entropy between markov transition rate matrices.
\newblock {\em IEEE Transactions on Information Theory}, 39(3):1056--1057,
  1993.

\bibitem{Kraaij2016}
R.~Kraaij.
\newblock Large deviations for finite state markov jump processes with
  mean-field interaction via the comparison principle for an associated
  hamilton--jacobi equation.
\newblock {\em Journal of Statistical Physics}, 164(2):321--345, 2016.

\bibitem{LasryLions2007}
J.-M. Lasry and P.-L. Lions.
\newblock Mean field games.
\newblock {\em Japanese Journal of Mathematics}, 2(1):229--260, 2007.

\bibitem{Malrieu2001}
F.~Malrieu.
\newblock Logarithmic sobolev inequalities for some nonlinear {PDE}'s.
\newblock {\em Stochastic Processes and their Applications}, 95(1):109--132,
  2001.

\bibitem{MielkePeletierRenger2014}
A.~Mielke, M.~A. Peletier, and D.~R.~M. Renger.
\newblock On the relation between gradient flows and the large-deviation
  principle, with applications to {Markov} chains and diffusion.
\newblock {\em Potential Analysis}, 41(4):1293--1327, 2014.

\bibitem{PeletierSchlichting2023}
M.~A. Peletier and A.~Schlichting.
\newblock Cosh gradient systems and tilting.
\newblock {\em Nonlinear Analysis}, 231:113094, 2023.

\bibitem{Todorov2006}
E.~Todorov.
\newblock Linearly-solvable markov decision problems.
\newblock In {\em Advances in Neural Information Processing Systems},
  volume~19, pages 1369--1376, 2006.

\bibitem{Todorov2009}
E.~Todorov.
\newblock Efficient computation of optimal actions.
\newblock {\em Proceedings of the National Academy of Sciences},
  106(28):11478--11483, 2009.

\bibitem{WangZariphopoulouZhou2020}
H.~Wang, T.~Zariphopoulou, and X.~Y. Zhou.
\newblock Reinforcement learning in continuous time and space: A stochastic
  control approach.
\newblock {\em Journal of Machine Learning Research}, 21(198):1--34, 2020.

\end{thebibliography}

\end{document}